\documentclass[11pt,oneside]{amsart}

\usepackage{amsmath,amsthm,amssymb,amsfonts,amscd}
\usepackage{url}
\usepackage{color}
\usepackage{setspace}
\usepackage{enumerate}

\linespread{1}

\usepackage{epsfig}
\usepackage{graphicx}
\usepackage{wrapfig}
\usepackage{mathrsfs}
\usepackage{pinlabel}

\definecolor{grey}{rgb}{0.7,0.7,0.7}

\newenvironment{changemargin}[2]{\begin{list}{}{%
\setlength{\topsep}{0pt}%
\setlength{\leftmargin}{0pt}%
\setlength{\rightmargin}{0pt}%
\setlength{\listparindent}{\parindent}%
\setlength{\itemindent}{\parindent}%
\setlength{\parsep}{0pt plus 1pt}%
\addtolength{\leftmargin}{#1}%
\addtolength{\rightmargin}{#2}%
}\item }{\end{list}}


%




\theoremstyle{plain}
\newtheorem{theorem}{Theorem}
\newtheorem{proposition}[theorem]{Proposition}

\newtheorem{observation}[theorem]{Observation}
\newtheorem{observations}[theorem]{Observations}
\newtheorem{claim}[theorem]{Claim}
\newtheorem{lemma}[theorem]{Lemma}

\newtheoremstyle{theoremwithref}{}{}{\itshape}{}{\bfseries}{.}{.5em}{#1 #2 #3}
\theoremstyle{theoremwithref}

\theoremstyle{definition}
\newtheorem{definition}[theorem]{Definition}

\newtheorem{remark}[theorem]{Remark}

\newtheorem{conjecture}[theorem]{Conjecture}

\numberwithin{theorem}{section}
\numberwithin{equation}{section}

\setcounter{tocdepth}{1}


\newcommand{\D}{\mathrm{d}}

\newcommand{\CP}{\mathcal C}
\newcommand{\CHS}{\mathcal H}
\newcommand{\ACP}{\mathsf{C}}
\newcommand{\ACHS}{\mathsf{H}}

\newcommand{\NN}{\mathbb{N}}
\newcommand{\ZZ}{\mathbb{Z}}
\newcommand{\QQ}{\mathbb{Q}}
\newcommand{\RR}{\mathbb{R}}

\newcommand{\HH}{\mathbb{H}}
\newcommand{\PP}{\mathbb{P}}

\renewcommand{\SS}{\mathbb{S}}

\newcommand{\GL}{\mathrm{GL}}
\newcommand{\SO}{\mathrm{SO}}
\newcommand{\OO}{\mathrm{O}}
\newcommand{\PO}{\mathrm{PO}}

\newcommand{\PSL}{\mathrm{PSL}}
\newcommand{\PGL}{\mathrm{PGL}}

\newcommand{\g}{\mathfrak{g}}

\newcommand{\AdS}{\mathrm{AdS}^3}
\newcommand{\AdSS}{\mathrm{AdS}}

\newcommand{\Adm}{\mathsf{Adm}^+}
\newcommand{\adm}{\mathsf{adm}}

\newcommand{\F}{F}
\newcommand{\T}{\mathfrak{F}}
\newcommand{\Ad}{\operatorname{Ad}}

\newcommand{\Hom}{\mathrm{Hom}}
\newcommand{\spa}{\mathrm{span}}
\newcommand{\ie}{i.e.\ }
\newcommand{\eg}{e.g.\ }
\newcommand{\resp}{resp.\ }

\newcommand{\SQ}{\mathrm{SQ}}


\title{Margulis spacetimes via the arc complex}

\author{Jeffrey Danciger}
\address{Department of Mathematics, The University of Texas at Austin, 1 University Station C1200, Austin, TX 78712, USA}
\email{jdanciger@math.utexas.edu}

\author{Fran\c{c}ois Gu\'eritaud}
\address{CNRS and Universit\'e Lille 1, Laboratoire Paul Painlev\'e, 59655 Villeneuve d'Ascq Cedex, France}
\email{francois.gueritaud@math.univ-lille1.fr}

\author{Fanny Kassel}
\address{CNRS and Universit\'e Lille 1, Laboratoire Paul Painlev\'e, 59655 Villeneuve d'Ascq Cedex, France}
\email{fanny.kassel@math.univ-lille1.fr}

\thanks{J.D. is partially supported by the National Science Foundation under the grant DMS 1103939.
F.G. and F.K. are partially supported by the Agence Nationale de la Recherche under the grants DiscGroup (ANR-11-BS01-013) and ETTT (ANR-09-BLAN-0116-01), and through the Labex CEMPI (ANR-11-LABX-0007-01). The authors also acknowledge support from the GEAR Network, funded by the National Science Foundation under grant numbers DMS 1107452, 1107263, and 1107367 (``RNMS: GEometric structures And Representation varieties").}

\begin{document}

\begin{abstract}
We study \emph{strip deformations} of convex cocompact hyperbolic surfaces, defined by inserting hyperbolic strips along a collection of disjoint geodesic arcs properly embedded in the surface.
We prove that any deformation of the surface that uniformly lengthens all closed geodesics can be realized as a strip deformation, in an essentially unique way.
The infinitesimal version of this result gives a parameterization, by the arc complex, of the moduli space of Margulis spacetimes with fixed convex cocompact linear holonomy.
As an application, we provide a new proof of the tameness of such Margulis spacetimes $M$ by establishing the Crooked Plane Conjecture, which states that $M$ admits a fundamental domain bounded by piecewise linear surfaces called crooked planes.
The noninfinitesimal version gives an analogous theory for complete anti-de Sitter $3$-manifolds.
\end{abstract}

\maketitle
\tableofcontents

\section{Introduction}

The understanding of moduli spaces using simple combinatorial models is a major theme in geometry. 
While coarse models, like the curve complex or pants complex, are used to great effect in the study of the various metrics and compactifications of Teichm\"{u}ller spaces (see \cite{mm99,raf07,bm08,bmns} for instance), parameterizations and/or cellulations can provide insight at both macroscopic and microscopic scales.
One prominent example is Penner's cell decomposition of the decorated Teichm\"{u}ller space of a punctured surface \cite{pen87}, which was generalized in \cite{hazPhD,gl11} and has interesting applications to mapping class groups (see \cite{pen12}).
In this paper we give a parameterization, comparable to Penner's, of the moduli space of certain Lorentzian $3$-manifolds called Margulis spacetimes.

A \emph{Margulis spacetime} is a quotient of the $3$-dimensional Minkowski space $\RR^{2,1}$ by a free group $\Gamma$ acting properly discontinuously by isometries. 
The first examples were constructed by Margulis \cite{mar83,mar84} in 1983, as counterexamples to Milnor's suggestion \cite{mil77} to remove the cocompactness assumption in the Auslander conjecture \cite{aus64}.
Since then many authors, most prominently Charette, Drumm, Goldman, Labourie, and Margulis, have studied their geometry, topology, and deformation theory: see \cite{dru92,dg95,dg99,cg00,gm00,glm09,cdg10,cdg11,cg13}, as well as \cite{dgk13}.
Any Margulis spacetime is determined by a noncompact hyperbolic surface~$S$, with $\pi_1(S) = \Gamma$, and an infinitesimal deformation of~$S$ called a \emph{proper deformation}.
The subset of proper deformations forms a symmetric cone, which we call the \emph{admissible cone}, in the tangent space to the Fricke--Teichm\"uller space of (classes of) complete hyperbolic structures of the same type as~$S$ on the underlying topological surface.
In the case that $S$ is convex cocompact, seminal work of Goldman--Labourie--Margulis \cite{glm09} shows that the admissible cone is open with two opposite, convex components, consisting of the infinitesimal deformations of~$S$ that uniformly expand or uniformly contract the marked length spectrum of~$S$.

In this paper we study a simple geometric construction, called a \emph{strip deformation}, which produces uniformly expanding deformations of~$S$: it is defined by cutting $S$ along finitely many disjoint, properly embedded geodesic arcs, and then gluing in a \emph{hyperbolic strip}, \ie the region between two ultraparallel geodesic lines in~$\HH^2$, at each arc.
An \emph{infinitesimal strip deformation} (Definition~\ref{def:inf-strip-deform}) is the derivative of a path of strip deformations along some fixed arcs as the widths of the strips decrease linearly to zero.
It is easy to see that, as soon as the supporting arcs decompose the surface into disks, an infinitesimal strip deformation lengthens all closed geodesics of~$S$ uniformly (this was observed by Thurston \cite{thu86a} and proved in more detail by Papadopoulos--Th\'eret \cite{pt10}); thus it is a proper deformation.
Our main result (Theorem~\ref{thm:main}) states that all proper deformations of $S$ can be realized as infinitesimal strip deformations, in an essentially unique way: after making some choices about the geometry of the strips, the map from the complex of arc systems on~$S$ to the projectivization of the admissible cone, taking any weighted system of arcs to the corresponding infinitesimal strip deformation, is a homeomorphism.

We note that infinitesimal strip deformations are also used by Goldman--Labourie--Margulis--Minsky in \cite{glmm}.
They construct modified infinitesimal strip deformations along geodesic arcs that accumulate on a geodesic lamination, in order to describe infinitesimal deformations of a surface for which all lengths increase, but not uniformly.

As an application of our main theorem, we give a new proof of the \emph{tameness} of Margulis spacetimes, under the assumption that the associated hyperbolic surface is convex cocompact.
This result was recently established, independently, by Choi--Goldman \cite{cg13} and by the authors \cite{dgk13}.
Here we actually prove the stronger result, named the \emph{Crooked Plane Conjecture} by Drumm--Goldman \cite{dg95}, that any Margulis spacetime admits a fundamental domain bounded by \emph{crooked planes}, piecewise linear surfaces introduced by Drumm \cite{dru92}.
This follows from our main theorem by observing that a strip deformation encodes precise directions for building fundamental domains in~$\RR^{2,1}$ bounded by crooked planes (Section~\ref{subsec:proof-crooked-plane-conj}).
In the case that the free group $\Gamma$ has rank two, the Crooked Plane Conjecture was verified by Charette--Drumm--Goldman \cite{cdg13}.
In particular, when the surface $S$ is a once-holed torus, they found a tiling of the admissible cone according to which triples of isotopy classes of crooked planes embed disjointly; this picture is generalized by our parameterization via strip deformations.

We now state precisely our main results, both in the setting of Margulis spacetimes just discussed, and in the related setting of complete anti-de Sitter $3$-manifolds (Section~\ref{subsec:intro-AdS}).

\subsection{Margulis spacetimes}

The $3$-dimensional Minkowski spa\-ce $\RR^{2,1}$ is the affine space $\RR^3$ endowed with the parallel Lorentzian structure induced by a quadratic form of signature $(2,1)$; its isometry group is $\OO(2,1)\ltimes\RR^3$, acting affinely.
Let $G$ be the group $\PGL_2(\RR)$, acting on the real hyperbolic plane $\HH^2$ by isometries in the usual way, and on the Lie algebra $\g=\mathfrak{pgl}_2(\RR)$ by the adjoint action.
We shall identify $\RR^{2,1}$ with the Lie algebra $\g$ endowed with the Lorentzian structure induced by half its Killing form.
The group of orientation-preserving isometries of~$\RR^{2,1}$ identifies with $G\ltimes\g$, acting on~$\g$ by $(g,w)\cdot v=\Ad(g)v+w$.
Its subgroup preserving the time orientation is $G_0\ltimes\g$, where $G_0=\PSL_2(\RR)$ is the identity component of~$G$.

By \cite{fg83} and \cite{mes90}, if a discrete group $\Gamma$ acts properly discontinuously and freely by isometries on~$\RR^{2,1}$, and if $\Gamma$ is not virtually solvable, then $\Gamma$ is a free group and its action on~$\RR^{2,1}$ is orientation-preserving (see \eg \cite{abe01}) and induces an embedding of~$\Gamma$ into $G\ltimes\g$ with image
\begin{equation}\label{eqn:Gamma-rho-u}
\Gamma^{\rho,u} = \{ (\rho(\gamma),u(\gamma))~|~\gamma\in\Gamma\} \subset G \ltimes \g ,
\end{equation}
where $\rho\in\Hom(\Gamma,G)$ is an injective and  discrete representation and\linebreak $u :\nolinebreak\Gamma\rightarrow\nolinebreak\g$ a $\rho$-cocycle, \ie $u(\gamma_1\gamma_2) = u(\gamma_1) + \Ad(\rho(\gamma_1))\,u(\gamma_2)$ for all $\gamma_1,\gamma_2\in\Gamma$.
By definition, a Margulis spacetime is a manifold $M = \Gamma^{\rho,u}\backslash\RR^{2,1}$ determined by such a proper group action.
Properness is invariant under conjugation by $G\ltimes\g$.
We shall consider conjugate proper actions to be equivalent; in other words, we shall consider Margulis spacetimes to be equivalent if there exists a marked isometry between them.
In particular, we will be interested in holonomies $\rho$ up to conjugacy, \ie as classes in $\Hom(\Gamma, G)/G$, and in $\rho$-cocycles $u$ up to addition of a coboundary, \ie as classes in the cohomology group $H^1_{\rho}(\Gamma,\g) := H^1(\Gamma,\g_{\Ad \rho})$.

Note that for a Margulis spacetime $\Gamma^{\rho,u}\backslash\RR^{2,1}$, the representation~$\rho$ is the holonomy of a noncompact hyperbolic surface $S = \rho(\Gamma)\backslash\HH^2$, and the $\rho$-cocycle $u$ can be interpreted as an infinitesimal deformation of this holonomy, obtained as the derivative at $t=0$ of some smooth path $(\rho_t)_{t\geq 0}$ of representations with $\rho_0=\rho$, in the sense that $\rho_t(\gamma) = e^{tu(\gamma)+o(t)}\rho(\gamma)$ for all $\gamma\in\Gamma$ (see \cite[\S\,2.3]{dgk13} for instance).
Thus the moduli space of Margulis spacetimes projects to the space of noncompact hyperbolic surfaces; describing the fiber above~$S$ amounts to identifying the \emph{proper deformations} $u$ of~$\rho$, \ie the infinitesimal deformations $u$ of~$\rho$ for which the group $\Gamma^{\rho,u}$ acts properly discontinuously on~$\RR^{2,1}$.

A properness criterion was given by Goldman--Labourie--Margulis \cite{glm09}: suitably interpreted \cite{gm00}, it states that for a convex cocompact representation $\rho\in\Hom(\Gamma,G)$ and a $\rho$-cocycle $u : \Gamma\rightarrow\g$, the group $\Gamma^{\rho,u}$ acts properly discontinuously on~$\RR^{2,1}$ if and only if the infinitesimal deformation $u$ ``uniformly lengthens all closed geodesics'', \ie
\begin{equation}\label{eqn:propcritMink}
\inf_{\gamma\in\Gamma\smallsetminus\{ e\} }\ \frac{\D \lambda_{\gamma}(u)}{\lambda_{\gamma}(\rho)} > 0,
\end{equation}
or ``uniformly contracts all closed geodesics'', \ie \eqref{eqn:propcritMink} holds for $-u$ instead of~$u$.
Here $\lambda_{\gamma} : \Hom(\Gamma,G)\rightarrow\RR_+$ is the function (see \eqref{eqn:def-lambda}) assigning to any representation $\tau$ the hyperbolic translation length of $\tau(\gamma)$.
That the injective and discrete representation $\rho$ is convex cocompact means that $\Gamma$ is finitely generated and that $\rho(\Gamma)$ does not contain any parabolic element; equivalently, $S=\rho(\Gamma)\backslash\HH^2$ is the union of a compact convex set (called the convex core), whose preimage in~$\HH^2$ is the smallest nonempty, closed, $\rho(\Gamma)$-invariant, convex subset of~$\HH^2$, and of finitely many ends of infinite volume (called the funnels).
In \cite{dgk13} we gave a new proof of the Goldman--La\-bourie--Margulis criterion, as well as another equivalent properness criterion in terms of expanding (or contracting) equivariant vector fields on~$\HH^2$.
These criteria are to be extended to arbitrary injective and discrete~$\rho$ (for finitely generated~$\Gamma$) in \cite{glm-parab,dgk-parab}, allowing $\rho(\Gamma)$ to have parabolic elements.

We now fix a convex cocompact hyperbolic surface $S$ (possibly nonorientable) with fundamental group $\Gamma=\pi_1(S)$ and holonomy representation $\rho\in\Hom(\Gamma,G)$.
We shall use the following terminology.

\begin{definition}\label{def:Fricke-Teich}
The \emph{Fricke--Teichm\"uller space} $\T\subset\Hom(\Gamma,G)/G$ of~$\rho$ is the set of conjugacy classes of convex cocompact holonomies of hyperbolic structures on the topological surface underlying $S\simeq\rho(\Gamma)\backslash\HH^2$.
Its tangent space $T_{[\rho]}\T$ identifies with the first cohomology group $H^1_{\rho}(\Gamma,\g)$.
\end{definition}

\begin{definition}\label{def:GLMcone}
The \emph{positive admissible cone} in $T_{[\rho]}\T\simeq H^1_{\rho}(\Gamma,\g)$ is the subset of classes of $\rho$-cocycles $u$ satisfying \eqref{eqn:propcritMink}.
The \emph{admissible cone} is the union of the positive admissible cone and of its opposite.
The projectivization of the admissible cone, a subset of $\PP(H^1_{\rho}(\Gamma,\g))$, will be denoted~$\adm(\rho)$.
\end{definition}

\noindent
The positive admissible cone is an open, convex cone in the finite-dimensional vector space $T_{[\rho]}\T\simeq H^1_{\rho}(\Gamma,\g)$.

We now describe the fundamental objects of the paper, namely strip deformations, which will be used to parameterize $\adm(\rho)$ (Theorem~\ref{thm:main}).

\subsection{The arc complex and strip deformations}\label{subsec:arc-complex}

We call \emph{arc} of~$S$ any nontrivial isotopy class of embedded lines in~$S$ for which each end exits in a funnel; we shall denote by $\mathscr{A}$ the set of arcs of~$S$.
A \emph{geodesic arc} is a geodesic representative of an arc. 
The following notion was first introduced by Thurston \cite[proof of Lem.\,3.4]{thu86a}.

\begin{definition}\label{def:strip-deform}
A \emph{strip deformation} of the hyperbolic surface~$S$ along a geodesic arc $\underline{\alpha}$ is a new hyperbolic surface that is obtained from~$S$ by cutting along~$\underline{\alpha}$ and gluing in (without any shearing) a \emph{strip}, the region in~$\HH^2$ bounded by two ultraparallel geodesics.
A strip deformation of~$S$ along a collection of pairwise disjoint and nonisotopic geodesic arcs $\underline{\alpha}_0, \ldots, \underline{\alpha}_k$ is a hyperbolic surface obtained by simultaneously performing this operation for each geodesic arc $\underline{\alpha}_i$, where $0\leq i\leq k$.
(Note that the operations commute since the $\underline{\alpha}_i$ are disjoint.)
We shall also say that the holonomy representation of the resulting surface (defined up to conjugation) is a strip deformation of the holonomy representation $\rho$ of~$S$.
\end{definition}

The nonshearing condition in Definition~\ref{def:strip-deform} means that the strip at the arc~$\underline{\alpha}_i$ is inserted so that the two endpoints of the most narrow cross section of the strip are identified with the two preimages of a single point $p_{\alpha_i}\in\underline{\alpha}_i$ (see Figure~\ref{fig:strip}).
This point $p_{\alpha_i}\in\underline{\alpha}_i$ is called the \emph{waist} of the strip.
The thickness of the strip at its most narrow cross section is called the \emph{width} of the strip.
In the above definition, the waist and width of each strip may be chosen arbitrarily.

\begin{figure}[ht!]
\centering
\labellist
\small\hair 2pt
\pinlabel $p_{\alpha}$ [u] at 183 90
\pinlabel $p_{\alpha}$ [t] at 660 95
\pinlabel $p_{\alpha}$ [b] at 660 128
\pinlabel $\underline{\alpha}$ [b] at 100 115
\pinlabel $\underline{\alpha}$ [b] at 575 130
\pinlabel $\underline{\alpha}$ [t] at 575 100
\endlabellist
\includegraphics[scale=0.4]{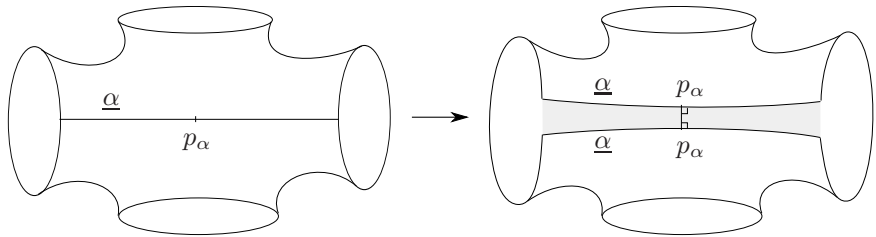}
\caption{A strip deformation along one arc in a four-holed sphere}
\label{fig:strip}
\end{figure}

We shall also use the infinitesimal version of this construction:

\begin{definition}\label{def:inf-strip-deform}
An \emph{infinitesimal strip deformation} of~$S$ is the class in $H^1_{\rho}(\Gamma,\g)$ of a $\rho$-cocycle $u : \Gamma\rightarrow\g$ obtained as the derivative at $t=0$ of a path $t\mapsto \rho_t \in\Hom(\Gamma,G)$ of strip deformations of $\rho_0=\rho$, along fixed geodesic arcs $\underline{\alpha}_0,\dots,\underline{\alpha}_k$, with fixed waists, and such that the widths of the strips, measured at the waists, are of the form $m_i t$ for some fixed numbers $m_0,\dots,m_k>0$; these numbers are called the \emph{widths} of the infinitesimal strip deformation.
\end{definition}

Our parameterization of the admissible cone by strip deformations depends on certain choices: for each arc $\alpha\in\mathscr{A}$, we fix
\begin{itemize}
  \item a geodesic representative $\underline{\alpha}$ of~$\alpha$,
  \item a point $p_{\alpha}\in\underline{\alpha}$ (the \emph{waist}),
  \item a positive number $m_{\alpha}>0$ (the \emph{width}).
\end{itemize}
We require that the $\underline{\alpha}$ intersect minimally, meaning that the representatives $\underline{\alpha}_1$ and $\underline{\alpha}_2$ of two arcs $\alpha_1$ and~$\alpha_2$ always have smallest possible intersection number (including ideal intersection points).
This can be achieved by choosing the representatives $\underline{\alpha}$ to intersect the boundary of the convex core orthogonally, but we do not require this.

For any arc $\alpha\in\mathscr{A}$, we define ${\boldsymbol f}(\alpha)\in H^1_{\rho}(\Gamma,\g)$ to be the infinitesimal strip deformation of~$\rho$ along~$\underline{\alpha}$ with waist~$p_{\alpha}$ and width~$m_{\alpha}$.
Recall that the \emph{arc complex} $\overline{X}$ of~$S$ is the simplicial complex with vertex set~$\mathscr{A}$ and with one $k$-dimensional simplex for each collection of $k+1$ pairwise homotopically disjoint arcs.
Top-dimensional cells of $\overline{X}$ correspond to so-called \emph{hyperideal triangulations} of~$S$ (see Section~\ref{subsec:angle-convex-core}).
The map $\alpha\mapsto{\boldsymbol f}(\alpha)$ extends by barycentric interpolation to a map ${\boldsymbol f}:\overline{X}\rightarrow H^1_{\rho}(\Gamma,\g)$.
By postcomposing with the projectivization map $H^1_{\rho}(\Gamma,\g)\rightarrow\PP(H^1_{\rho}(\Gamma,\g))$, we obtain a map
$$f : \overline{X} \longrightarrow \PP\big(H^1_{\rho}(\Gamma,\g)\big).$$
Let $X$ be the complex of \emph{arc systems} of~$S$, \ie the subset of~$\overline{X}$ obtained by removing all cells corresponding to collections of arcs that do \emph{not} subdivide the surface $S$ into topological disks.
For instance, no vertex of~$\overline{X}$ is in~$X$, but the interior of any top-dimensional cell is (see Section~\ref{sec:ex} for more examples).
By work of Penner \cite{pen87} on the decorated Teichm\"uller space, $X$ is homeomorphic to an open ball of dimension $3 |\chi|-1=\dim(\T)-1$, where $\chi$ is the Euler characteristic of~$S$.
Our main result is that any point of the positive admissible cone is realized as an infinitesimal strip deformation, in a \emph{unique} way given our choice of $(\underline{\alpha},p_{\alpha},m_{\alpha})_{\alpha\in\mathscr{A}}$:

\begin{theorem}\label{thm:main}
The map $f$ restricts to a homeomorphism between $X$ and the projectivized admissible cone $\adm(\rho)$.
\end{theorem}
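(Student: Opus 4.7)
The plan is to verify continuity, image containment, local injectivity, and properness, then conclude via a standard covering-space argument since both $X$ and $\adm(\rho)$ are connected and simply connected topological manifolds of the same dimension $3|\chi|-1$.

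\emph{Continuity and image.} First I would check that $\boldsymbol{f}: \overline{X}\to H^1_\rho(\Gamma,\g)$ is continuous: for a fixed collection of arcs with fixed waists, the infinitesimal strip deformation depends linearly on the widths, so $\boldsymbol{f}$ restricted to each closed simplex of $\overline X$ is linear, and the barycentric construction guarantees agreement on common faces. Projectivizing gives continuity of $f$ wherever $\boldsymbol{f}$ is nonzero. Next, invoking the Thurston/Papadopoulos--Th\'eret fact mentioned in the introduction, an infinitesimal strip deformation along a single arc system (which cuts $S$ into disks) uniformly lengthens all closed geodesics. Combined with the openness and convexity of the positive admissible cone in $H^1_\rho(\Gamma,\g)$, this shows $f(X) \subseteq \adm(\rho)$, and in particular $\boldsymbol{f}$ is nowhere vanishing on $X$.

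\emph{Local homeomorphism.} On a top-dimensional simplex of $\overline X$, corresponding to a hyperideal triangulation $\mathcal T$ with $|\mathcal T|=\dim(\T)=3|\chi|$ arcs, $\boldsymbol{f}$ is the restriction of a linear map $\RR^{\mathcal T} \to H^1_\rho(\Gamma,\g)$, a map between real vector spaces of the same dimension $3|\chi|$. I would prove this linear map is an isomorphism by showing the vectors $\boldsymbol{f}(\alpha)$ for $\alpha\in\mathcal T$ are linearly independent: any nontrivial relation would equate a nonzero nonnegative combination with the negative of another, placing one positive admissible class equal to a negative admissible class, contradicting the disjointness of the two components of the admissible cone. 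Invariance of domain then upgrades to local openness of $f$ on each open top cell of $X$, and a short additional argument at the interior faces of $X$ (where several top cells meet) should extend local injectivity across those faces as well.

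\emph{Properness and conclusion.} The main step is to prove $f|_X : X \to \adm(\rho)$ is proper. A sequence $x_n \in X$ escaping every compact subset of $X$ has a subsequence converging in $\overline X$ to a cell outside $X$, that is, to a family of arcs whose complement in $S$ contains some nondisk component $\Sigma$. I would exhibit an essential closed curve $\gamma \subset \Sigma$ disjoint from the limiting arcs, hence intersecting the arcs underlying $x_n$ a bounded number of times: the derivative $\D\lambda_\gamma$ evaluated on $\boldsymbol{f}(x_n)$ remains bounded while $\lambda_\gamma(\rho)>0$ is fixed, so after normalization the infimum in \eqref{eqn:propcritMink} for $\boldsymbol{f}(x_n)$ tends to $0$, meaning $f(x_n)$ leaves every compact subset of $\adm(\rho)$. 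Having shown that $f|_X$ is a proper local homeomorphism between connected and simply connected manifolds of the same finite dimension, a covering-space argument forces $f|_X$ to be a homeomorphism onto $\adm(\rho)$.

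\emph{Main obstacle.} The hardest step is almost certainly the properness argument: translating the combinatorial statement ``the limiting arc family fails to cut $S$ into disks'' into the quantitative geometric statement ``the properness criterion \eqref{eqn:propcritMink} degenerates'' requires a careful choice of test curves $\gamma$ adapted to the limiting arc system together with uniform control of $\D\lambda_\gamma$ along sequences of strip-deformation cocycles. Extending local injectivity across the interior faces of $X$ may also require nontrivial bookkeeping, since there one must compare linear bases coming from distinct hyperideal triangulations sharing some common arcs.
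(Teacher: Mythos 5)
Your overall strategy — show $f|_X$ is a proper local homeomorphism onto the ball $\adm(\rho)$ and conclude by covering theory — matches the paper's reduction (Proposition~\ref{prop:mainsteps}), but two of your key arguments do not work as stated, and the step you call ``a short additional argument'' is where most of the paper's effort lies.

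The linear-independence claim on top cells fails: given a relation $\sum_\alpha c_\alpha\boldsymbol{f}(\alpha)=0$, separating signs gives $\sum_{c_\alpha>0}c_\alpha\boldsymbol{f}(\alpha)=\sum_{c_\alpha<0}|c_\alpha|\,\boldsymbol{f}(\alpha)$, and \emph{both} sides are nonnegative combinations of strip deformations, hence both lie in the closure of the positive admissible cone; you do not obtain a positive class equal to a negative class, so there is no contradiction with the disjointness of the two components. (Also, a nonnegative combination supported on a proper sub-collection of a triangulation's arcs need not be uniformly lengthening.) The paper's proof of this (Claim~\ref{claim:main}.(0), Section~\ref{subsec:independent}) is a nontrivial projective-geometry argument, tracking the ``longitudinal motion'' of edges under an equivariant Killing-field assignment and showing that an edge of maximal longitudinal motion would force a timelike relative motion, contradicting that strip-deformation relative motions are infinitesimal translations. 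Your properness argument has a parallel gap: you assume that any sequence $x_n$ escaping compact subsets of $X$ subconverges in $\overline{X}$, but this fails when the supporting arcs themselves diverge in the (infinite) arc complex — then there is no limit point in $\overline{X}$ at all. This ``diverging supports'' case is the hard one, requiring the quantitative estimates of Section~\ref{sec:metric-estim}: the unit-peripheral normalization, which makes $\boldsymbol{f}(\overline{X})$ precompact and bounded away from $0$ (Proposition~\ref{prop:unit-peripheral}), and the angle bound~\eqref{eqn:biggestangle}, applied to a closed geodesic nearly parallel to the Hausdorff limit lamination of the supports. Finally, the local-homeomorphism claim at lower-codimension strata — the disjointness of $f(\Delta)$ and $f(\Delta')$ across a diagonal switch (Claim~\ref{claim:main}.(1)) and the degree-$1$ analysis of the pentagon-move link map, which relies on the convexity statement Claim~\ref{claim:main}.(2) — is not a short appendix; it occupies Sections~\ref{sec:formalism} and~\ref{sec:codim01} and is really the heart of the proof.
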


It is natural to wonder about the image of ${\boldsymbol f}$ in $H^1_{\rho}(\Gamma,\g)$, before projectivization.
Since $\adm(\rho)=f(X)$ is convex, it seems reasonable to hope that ${\boldsymbol f}(X)$ should appear as the boundary of a convex object in $H^1_{\rho}(\Gamma,\g)$: thus the following conjecture would provide a concrete realization of $X$ as part of the boundary of the convex hull of a natural discrete subset in a finite-dimensional vector space.

\begin{conjecture} \label{conj}
There exists a choice of minimally intersecting geodesic representatives~$\underline{\alpha}$ and waists~$p_{\alpha}$, for $\alpha\in\mathscr{A}$, such that if all the widths $m_{\alpha}$ equal~$1$, then ${\boldsymbol f}(X)$ is a convex hypersurface in $H^1_{\rho}(\Gamma,\g)$.
\end{conjecture}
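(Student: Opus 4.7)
The plan is to verify local convexity of the piecewise linear hypersurface ${\boldsymbol f}(X)$ at each codimension-one face of the arc complex, and then combine this with Theorem~\ref{thm:main} via a global star-shapedness argument. With all widths $m_\alpha = 1$ and ${\boldsymbol f}$ defined by barycentric interpolation, ${\boldsymbol f}$ is affine on each top-dimensional cell of $X$ (which corresponds to a hyperideal triangulation of $S$), and Theorem~\ref{thm:main} ensures that ${\boldsymbol f}$ is injective on $X$, giving an embedded PL hypersurface whose radial projection is a homeomorphism onto $\adm(\rho)$.

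The key local computation happens at a \emph{flip}: two adjacent top-dimensional cells of $X$ share a codimension-one face and differ by exchanging an arc $\alpha$ for an arc $\alpha'$, where $\alpha$ and $\alpha'$ are the two diagonals of a hyperideal quadrilateral $Q$ whose four sides $\beta_1,\dots,\beta_4$ belong to the common sub-arc-system. I would compute the unique linear relation (up to scale) among the six vectors ${\boldsymbol f}(\alpha), {\boldsymbol f}(\alpha'), {\boldsymbol f}(\beta_1),\dots,{\boldsymbol f}(\beta_4)$ in $H^1_\rho(\Gamma,\g)$, and show it has the form
\[
{\boldsymbol f}(\alpha) + {\boldsymbol f}(\alpha') \;=\; \sum_{i=1}^{4} c_i\,{\boldsymbol f}(\beta_i)
\]
with $c_i \geq 0$ and $\sum_i c_i \leq 2$; this is the standard Alexandrov-type criterion guaranteeing that the two adjacent affine simplices fold outward across their shared face. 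A natural candidate for the choice of $(\underline{\alpha},p_\alpha)$ making this identity verifiable is to take each $\underline{\alpha}$ orthogonal to the boundary of the convex core and each waist $p_\alpha$ at the midpoint of the convex-core portion of $\underline{\alpha}$. Explicit shear-coordinate calculations in such a quadrilateral should then yield an identity reminiscent of Penner's Ptolemy relation on decorated Teichm\"uller space~\cite{pen87}.

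Once local convexity is established, a global convexity argument proceeds as follows. By Theorem~\ref{thm:main}, ${\boldsymbol f}(X)$ meets each ray of the positive admissible cone in exactly one point, and $\adm(\rho)$ is convex; thus ${\boldsymbol f}(X)$ is a closed, properly embedded PL hypersurface in $H^1_\rho(\Gamma,\g)$ that is a graph over a convex open set of directions. A connected, locally convex PL hypersurface of this kind bounds a convex body, which completes the proof.

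The main obstacle is the local flip identity: producing an explicit choice of representatives and waists for which the coefficients $c_i$ have the required signs \emph{universally}, including the degenerate cases where a single arc of $S$ appears twice among the sides of $Q$ (so that the quadrilateral lifts to a non-embedded configuration in the universal cover). The relation must be derived $\rho$-equivariantly, and the coefficient bookkeeping in those degenerate quadrilaterals is likely what blocks a direct verification. A potentially cleaner route would be to realize ${\boldsymbol f}(X)$ as a level set of a natural concave function on the admissible cone, for instance one obtained by suitably normalizing a minimum-stretch or length-type functional on the Fricke--Teichm\"uller space $\T$, thereby bypassing the case analysis altogether.
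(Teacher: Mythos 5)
You should first note that this statement is labeled a \emph{conjecture} in the paper; the authors do not prove it, and offer only partial local evidence in Appendix~\ref{sec:remarks}. What you outline is essentially the strategy pursued there, and the step you yourself flag as the ``main obstacle'' is precisely what keeps this a conjecture rather than a theorem.

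The specific gap is in your local flip identity. You write the unique (up to scale) linear relation among the vectors ${\boldsymbol f}(\gamma)$, $\gamma\in\Delta\cup\Delta'$, as
\[
{\boldsymbol f}(\alpha) + {\boldsymbol f}(\alpha') \;=\; \sum_{i=1}^{4} c_i\,{\boldsymbol f}(\beta_i),
\]
supported on only the six arcs bounding the hyperideal quadrilateral, with $c_i\geq 0$. For an arbitrary choice of representatives and waists this is \emph{false}: the relation \eqref{eqn:zerocross} generically has nonzero coefficients $c_\beta$ on all $3|\chi|-1$ arcs of $\Delta\cap\Delta'$, not merely the four $\beta_i$, and their signs are not controlled. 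The appendix shows (Lemmas~\ref{lem:descript-phi}, \ref{lem:spacelike}, \ref{lem:evidence-conj}) that the relation takes the clean six-term form with the expected signs exactly when, for the given flip, the waists of $\underline\alpha,\underline\alpha',\underline{\smash\beta}_1,\dots,\underline{\smash\beta}_4$ are all orthogonal projections of a single common point $p\in\HH^2$, and the geodesic representatives of those six arcs are chosen so that their lifts form the configuration of Figure~\ref{fig:claim3-1} (for instance, arcs orthogonal to $\partial S$). Since each arc participates in many flips, and the distinguished point $p$ depends on the flip, it is not known whether any single assignment of waists makes this hypothesis hold at every flip simultaneously; the paper even remarks after Lemma~\ref{lem:evidence-conj} that for general waists ``typically none of the terms $c_\beta$ of \eqref{eqn:zerocross} vanish, and the signs \dots may vary: the conclusion \dots might then fail.'' So your candidate (midpoint waists on orthogonal arcs) is a reasonable guess, but nothing in the paper supports it, and the verification would have to hold at every flip including the degenerate ones; the coherence problem already arises for embedded quadrilaterals.

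Two smaller corrections. Even under the paper's favorable hypotheses, the coefficients of ${\boldsymbol f}(\alpha)$ and ${\boldsymbol f}(\alpha')$ are the two distinct side lengths of a parallelogram (proof of Lemma~\ref{lem:evidence-conj}), so the normalization $c_\alpha=c_{\alpha'}=1$ in your display is generally unavailable. And the salience inequality \eqref{eqn:salient} reads $c_\alpha+c_{\alpha'}+\sum_\beta c_\beta<0$, which after normalization becomes $\sum_i c_i > 2$, not $\leq 2$ as you wrote: the hypersurface is to be convex \emph{when seen from the origin}. Your closing idea of realizing ${\boldsymbol f}(X)$ as a level set of a concave functional is attractive precisely because it would bypass the flip-by-flip coherence problem, but no such functional is currently known, which is why Conjecture~\ref{conj} remains open.
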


\subsection{Fundamental domains for Margulis spacetimes}

In 1992, Drumm \cite{dru92} introduced piecewise linear surfaces in the $3$-dimensional Minkowski space $\RR^{2,1}$ called \emph{crooked planes} (see \cite{cg00}).
The \emph{Crooked Plane Conjecture} of Drumm--Goldman states that any Margulis spacetime should admit a fundamental domain in~$\RR^{2,1}$ bounded by finitely many crooked planes.
Charette--Drumm--Goldman \cite{cdg10,cdg13} proved this conjecture in the special case that the fundamental group is a free group of rank two.
Here we give a proof of the Crooked Plane Conjecture in the general case that the linear holonomy is convex cocompact. 

\begin{theorem}\label{thm:crooked-planes}
Any discrete subgroup of $\OO(2,1)\ltimes\RR^3$ acting properly discontinuously and freely on~$\RR^{2,1}$, with convex cocompact linear part, admits a fundamental domain in~$\RR^{2,1}$ bounded by finitely many crooked planes.
\end{theorem}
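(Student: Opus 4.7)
The plan is to deduce Theorem~\ref{thm:crooked-planes} from Theorem~\ref{thm:main} by converting the data of an infinitesimal strip deformation directly into a collection of crooked planes, as alluded to in Section~\ref{subsec:proof-crooked-plane-conj}. Let $\Gamma^{\rho,u}\backslash\RR^{2,1}$ be a Margulis spacetime with $\rho$ convex cocompact. Since a fundamental domain for the action with cocycle $-u$ is obtained from one for $u$ by reflection through the origin, we may assume $[u]\in H^1_\rho(\Gamma,\g)$ lies in the positive admissible cone. By Theorem~\ref{thm:main}, after rescaling we may write $[u]={\boldsymbol f}(\sum_{i=0}^k t_i\alpha_i)$ for some arc system $\alpha_0,\dots,\alpha_k$ subdividing $S$ into topological disks and some weights $t_i>0$; that is, $u$ is cohomologous to the infinitesimal strip deformation of~$\rho$ along the chosen geodesic representatives $\underline{\alpha}_i$ with waists $p_{\alpha_i}$ and widths $t_i m_{\alpha_i}$.

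Next, I would associate a crooked plane $C_{\tilde\alpha}\subset\RR^{2,1}$ to each lift $\tilde\alpha\subset\HH^2$ of each arc $\underline{\alpha}_i$, using the identification $\RR^{2,1}\simeq\g$. The geodesic $\tilde\alpha$ singles out a spacelike vector (the Killing field generating its hyperbolic translation), which prescribes the spine direction of a crooked plane up to translation. The waist~$p_{\tilde\alpha}$ and the infinitesimal strip width $t_i m_{\alpha_i}$ together locate the vertex (``tip'') of $C_{\tilde\alpha}$: lifting $p_{\tilde\alpha}$ equivariantly and displacing it by a vector built out of $u$ and the infinitesimal width gives a well-defined point in~$\RR^{2,1}$. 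Equivariance of the whole collection under $\Gamma^{\rho,u}$ follows automatically from the cocycle identity, so the construction descends to finitely many $\Gamma$-orbits of crooked planes, one per arc of the arc system.

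The remaining work is to verify that the collection $\{C_{\tilde\alpha}\}$ bounds a fundamental domain for $\Gamma^{\rho,u}$. Cutting $S$ along the $\underline{\alpha}_i$ yields topological disks, which lift to a tiling of~$\HH^2$ by hyperideal polygons whose edges are the $\tilde\alpha$. The tiles are to be ``thickened'' in~$\RR^{2,1}$ into polyhedral regions whose faces are pieces of the $C_{\tilde\alpha}$, and these thickened tiles should tile~$\RR^{2,1}$ equivariantly. This reduces to showing that, whenever two arcs $\tilde\alpha_1,\tilde\alpha_2$ bound a common polygon of the decomposition, the crooked planes $C_{\tilde\alpha_1}$ and $C_{\tilde\alpha_2}$ are disjoint (or meet only along a shared null ray at an ideal vertex), and that the $\Gamma$-orbit exhibits no other incidences.

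This disjointness check is the main obstacle. I would appeal to the classical Drumm--Goldman disjointness criterion for crooked planes \cite{dg99,cg00}, which expresses disjointness in terms of the relative positions of spines, tips, and the spacelike directions of the crooked halfspaces. For two arcs in a common polygon, the infinitesimal strip deformation data provides exactly the geometric relations needed: positivity of all the widths $t_i m_{\alpha_i}$ forces the tips to be displaced in consistent directions, and the minimal intersection assumption on the $\underline{\alpha}_i$ ensures that adjacent spines open away from the polygon. Convex cocompactness of $\rho$ guarantees that only finitely many arcs meet any compact region of~$\HH^2$, so once equivariant disjointness is checked the resulting closed polyhedral region projects to a fundamental domain in the quotient bounded by finitely many crooked planes, as required.
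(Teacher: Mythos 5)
Your high-level plan matches the paper's: use Theorem~\ref{thm:main} to write $u$ as an infinitesimal strip deformation, attach a crooked plane to each lift of each supporting arc, verify disjointness via Drumm's criterion, and deduce that the resulting region bounds a fundamental domain. But the proposal leaves the single most important technical step as a placeholder, and I do not think the proof can be completed along the lines you describe without filling it in.

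The gap is the definition of the crooked planes' centers. You write that "the waist~$p_{\tilde\alpha}$ and the infinitesimal strip width together locate the vertex" by "lifting $p_{\tilde\alpha}$ equivariantly and displacing it by a vector built out of $u$ and the infinitesimal width." This does not determine a construction. The center of a crooked plane is a point of $\RR^{2,1}\simeq\g$, and the affine action of $\Gamma^{\rho,u}$ does not preserve $\HH^2\subset\RR^{2,1}$, so "lifting $p_{\tilde\alpha}$ equivariantly" is not meaningful. More importantly, the translation vector of each crooked plane must be chosen so that the relative displacement between crooked planes across each arc $\tilde\alpha$ lies in the stem quadrant $\SQ(\tilde\alpha)$; that is exactly the hypothesis of Drumm's disjointness criterion (Proposition~\ref{prop:disjoint-CP-R21}). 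To achieve this one needs to first \emph{integrate} the cocycle: choose a $(\rho,u)$-equivariant assignment $\varphi:\widetilde{\mathscr{T}}\to\g$ of Killing fields to the tiles cut out by the arcs, with the property that the relative motion $\psi(\tilde\alpha)=\varphi(\delta')-\varphi(\delta)$ across each edge is precisely the stem-quadrant translation realizing the strip (this is Observation~\ref{obs:realize-strips} plus the formalism of Section~\ref{sec:formalism}), and then take the center of the crooked plane along $\tilde\alpha$ to be the average $v_{\tilde\alpha}=\tfrac12(\varphi(\delta)+\varphi(\delta'))$. Without this intermediate structure, the sentence "positivity of all the widths forces the tips to be displaced in consistent directions" is an appeal to what you hope should happen, not an argument; the whole content of the proof is precisely in exhibiting the translation vectors for which the stem-quadrant condition holds.

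Two smaller points. First, your parenthetical "(or meet only along a shared null ray at an ideal vertex)" is wrong: with the choice of centers just described, crooked planes associated to distinct arcs are genuinely disjoint — the closed positive crooked half-space at one arc is nested inside the open one at the other (see the nesting inclusions \eqref{eqn:nesting1}–\eqref{eqn:nesting2}). Second, local finiteness of the lifted arcs in $\HH^2$ does not by itself show that the $\Gamma^{\rho,u}$-translates of the candidate region cover all of $\RR^{2,1}$; one also has to show that the crooked planes do not accumulate anywhere in $\RR^{2,1}$ (this is the content of Lemma~\ref{lem:explode-in-p}, or alternatively one cites Drumm's completeness theorem). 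As written, your last paragraph elides this.
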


This is an easy consequence of Theorem~\ref{thm:main}: the idea is to interpret infinitesimal strip deformations as motions of crooked planes making them disjoint in~$\RR^{2,1}$ (see Section~\ref{subsec:proof-crooked-plane-conj}).
Theorem~\ref{thm:crooked-planes} provides a new proof of the tameness of Margulis spacetimes with convex cocompact linear holonomy, independent from the original proofs given in \cite{cg13,dgk13}.

\subsection{Strip deformations and anti-de Sitter $3$-manifolds}\label{subsec:intro-AdS}

In \cite{dgk13} we showed that, in a precise sense, Margulis spacetimes behave like ``infinitesimal analogues'' or ``renormalized limits'' of complete $\AdSS$ manifolds, which are quotients of the negatively-curved \emph{anti-de Sitter space} $\AdS$.
Following this point of view further, we now derive analogues of Theorems \ref{thm:main} and~\ref{thm:crooked-planes} for $\AdSS$ manifolds.

The anti-de Sitter space $\AdS = \PO(2,2)/\OO(2,1)$ is a model space for Lorentzian manifolds of constant negative curvature.
It can be realized as the set of negative points in $\PP^3(\RR)$ with respect to a quadratic form of signature $(2,2)$; its isometry group is $\PO(2,2)$.
Equivalently, $\AdS$ can be realized as the identity component $G_0=\PSL_2(\RR)$ of $G=\PGL_2(\RR)$, endowed with the biinvariant Lorentzian structure induced by half the Killing form of $\g=\mathfrak{pgl}_2(\RR)=\mathfrak{psl}_2(\RR)$; the group of orientation-preserving isometries then identifies with
$$(G\times G)_+ := \{ (g_1,g_2)\in G\times G~|~g_1g_2\in G_0\} ,$$
acting on~$G_0$ by right and left multiplication: $(g_1,g_2)\cdot g=g_2gg_1^{-1}$.

By \cite{kr85}, any torsion-free discrete subgroup of $(G\times G)_+$ acting properly~dis\-continuously on $\AdS$ is, up to switching the two factors of $G\times G$, of the form
$$\Gamma^{\rho,j} = \{ (\rho(\gamma), j(\gamma))~|~\gamma\in\Gamma\} \subset G\times G$$
where $\Gamma$ is a discrete group and $\rho,j \in\Hom(\Gamma,G)$ are two representations with $j$ injective and discrete.
Suppose that $\Gamma$ is finitely generated.
By \cite{kasPhD,gk13}, a necessary and sufficient condition for the action of $\Gamma^{\rho,j}$ on $\AdS$ to be properly discontinuous is that (up to switching the two factors) $j$ be injective and discrete and $\rho$ be ``uniformly contracting'' with respect to~$j$, in the sense that there exists a $(j,\rho)$-equivariant Lipschitz map $\HH^2\rightarrow\HH^2$ with Lipschitz constant $<1$, or equivalently that
\begin{equation}\label{eqn:propcritAdS}
\inf_{\gamma\in\Gamma\smallsetminus\{ e\} }\ \frac{\lambda_{\gamma}(j)}{\lambda_{\gamma}(\rho)} > 1,
\end{equation}
where $\lambda_{\gamma} : \Hom(\Gamma,G)\rightarrow\RR_+$ is the hyperbolic translation length function of~$\gamma$ as above, see \eqref{eqn:def-lambda}.
One should view \eqref{eqn:propcritMink} as the derivative of \eqref{eqn:propcritAdS} as $j$ tends to~$\rho$ with derivative~$u$.
If $\Gamma$ is the fundamental group of a compact~sur\-face and both $\rho,j$ are injective and discrete, then \eqref{eqn:propcritAdS} is never satisfied \cite{thu86a}.

Suppose that $\rho$ is convex cocompact, of infinite covolume, and let $S$ be the hyperbolic surface $\rho(\Gamma)\backslash\HH^2$, with holonomy~$\rho$.
Let $\Adm(\rho)$ be the subset of the Fricke--Teichm\"uller space $\T$ of~$\rho$ (Definition~\ref{def:Fricke-Teich}) consisting of classes of convex cocompact representations $j\in\Hom(\Gamma,G)$ that are ``uniformly longer'' than~$\rho$, namely that satisfy \eqref{eqn:propcritAdS}.
As in Section~\ref{subsec:arc-complex}, for each arc $\alpha\in\mathscr{A}$ of~$S$ we fix a geodesic representative $\underline{\alpha}$ of~$\alpha$, a point $p_{\alpha}\in\underline{\alpha}$, and a positive number $m_{\alpha}>0$, and we require that the $\underline{\alpha}$ intersect minimally.
Let $\F(\alpha)\in\T$ be the class of the strip deformation of~$\rho$ along~$\underline{\alpha}$ with waist~$p_{\alpha}$ and width~$m_{\alpha}$.
Since the vertices of a cell of the arc complex $\overline{X}$ correspond to disjoint arcs, the cut-and-paste operations along them do not interfere and the map $\alpha\mapsto\F(\alpha)$ naturally extends to a map $\F : \overline{\mathrm{C}X}\rightarrow\T$, where $\overline{\mathrm{C}X}$ is the abstract cone over the arc complex~$\overline{X}$, with the property that
\begin{equation}\label{eqn:f-deriv-F}
{\boldsymbol f}(x) = \frac{\D}{\D t}\Big|_{t=0}\, F(tx) \,\in\, T_{[\rho]}\T \,\simeq\, H^1_{\rho}(\Gamma,\g)
\end{equation}
for all $x\in\overline{X}$.
Recall that $\overline{\mathrm{C}X}$ is the quotient of $\overline{X}\times\RR_+$ by the equivalence relation $(x,0)\sim (x',0)$ for all $x,x'\in\overline{X}$: we abbreviate $(x,t)$ as $tx$.
Let $\mathrm{C}X\subset\overline{\mathrm{C}X}$ be the abstract \emph{open} cone over~$X$, equal to the image of $X\times\RR_+^{\ast}$.
We prove the following ``macroscopic'' version of Theorem~\ref{thm:main}.

\begin{theorem}\label{thm:main-macro}
For convex cocompact~$\rho$, the map $\F$ restricts to a homeomorphism between $\mathrm{C}X$ and $\Adm(\rho)$.
\end{theorem}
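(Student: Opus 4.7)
The strategy is to bootstrap from the infinitesimal statement (Theorem~\ref{thm:main}) via three ingredients: (a) $\F$ sends $\mathrm{C}X$ into $\Adm(\rho)$; (b) $\F|_{\mathrm{C}X}$ is a local homeomorphism; and (c) $\F|_{\mathrm{C}X}\colon \mathrm{C}X\to\Adm(\rho)$ is proper. Since $\mathrm{C}X$ is an open ball of dimension $3|\chi|=\dim\T$, and $\F$ is real-analytic on the interior of each top-dimensional cell of $X$ crossed with $\RR_+^\ast$, combining (a)--(c) via invariance of domain and covering-space theory will produce a proper local homeomorphism onto $\Adm(\rho)$. The derivative identity \eqref{eqn:f-deriv-F} together with Theorem~\ref{thm:main} will pin the covering degree to one near the cone point, and connectedness of $\Adm(\rho)$ (established via convex combinations of admissible length ratios) will yield the global homeomorphism claimed.

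For (a), fix $y=tx\in\mathrm{C}X$ with $t>0$ and $x$ in the interior of a top-dimensional cell of $X$, corresponding to a filling arc system $\underline{\alpha}_0,\dots,\underline{\alpha}_k$. Every closed geodesic $\gamma$ of $\rho$ lies in the compact convex core of $S$, and by the filling condition there is a constant $c>0$ (depending only on $\rho$ and the arc system) such that the geometric intersection number satisfies $i(\gamma,\bigcup_i\underline{\alpha}_i)\geq c\,\lambda_\gamma(\rho)$. A standard hyperbolic triangle-inequality argument then shows that inserting strips of widths $tm_i$ without shearing adds at least $tm_i$ to the length of $\gamma$ per intersection with $\underline{\alpha}_i$. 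Thus $\lambda_\gamma(\F(y))\geq\lambda_\gamma(\rho)(1+tc\min_i m_i)$ uniformly in $\gamma$, which places $\F(y)$ in $\Adm(\rho)$.

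For (b), at any interior point $y$ of a top-dimensional cell of $\mathrm{C}X$, the differential $d\F_y$ sends an infinitesimal width-variation $(m_i')$ to an infinitesimal strip deformation of the convex cocompact surface $\F(y)$ along the image arc system (straightened in the new hyperbolic structure, with propagated waists). Theorem~\ref{thm:main} applied to $\F(y)$ shows that these infinitesimal strip deformations span the full admissible cone of $\F(y)$, a vector space of the same dimension as the width-variation space; hence $d\F_y$ is a linear isomorphism and $\F$ is a local diffeomorphism at $y$. Points in lower-dimensional strata of $X$ lie on the common boundary of adjacent top cells and are handled by continuity. For (c), a sequence $y_n=t_nx_n$ exiting every compact of $\mathrm{C}X$ must satisfy $t_n\to 0$, or $t_n\to\infty$, or $x_n$ escapes every compact of $X$. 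In the first case, $\F(y_n)\to[\rho]$ and the length ratio tends to $1$; in the second, some strip width blows up and some closed-geodesic length tends to infinity; in the third, $x_n$ accumulates on a point of $\overline{X}\smallsetminus X$, so the limiting arc system fails to fill and some simple closed curve remains disjoint from the arcs, with unchanged length, pushing the ratio back to $1$. In all three scenarios $\F(y_n)$ leaves every compact subset of $\Adm(\rho)$.

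I expect the main obstacle to be step (b): identifying $d\F_y$ with an infinitesimal strip-deformation map on the deformed surface $\F(y)$ requires careful geometric bookkeeping, since the geodesic representatives, waists, and widths of the arcs all shift implicitly with the hyperbolic structure, and one must check that the arc system supporting $d\F_y$ in $\F(y)$ still satisfies the minimally-intersecting hypothesis of Theorem~\ref{thm:main}. A secondary technical point is to handle the lower-dimensional strata of $X$ uniformly in the invariance-of-domain argument, and to verify connectedness (or simple connectedness) of $\Adm(\rho)$ so that the resulting proper local homeomorphism is forced to be a genuine homeomorphism rather than merely a covering of higher degree.
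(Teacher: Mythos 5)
Your overall strategy matches the paper's Proposition~\ref{prop:mainsteps}: show that $\F$ maps $\mathrm{C}X$ into $\Adm(\rho)$, is a local homeomorphism, and is proper, then combine with homotopical triviality of the target. Step~(a) is done by a direct length comparison rather than the paper's approach of integrating the infinitesimal inequality via~\eqref{eqn:f-deriv-F}; be warned that the claim that inserting a strip ``adds at least $tm_i$ per intersection'' is not a one-line triangle inequality, since the new geodesic representative of $\gamma$ may cross the strips obliquely and shift laterally along the supporting arcs --- making this quantitative is the substance of Papadopoulos--Th\'eret \cite{pt10}, which the paper cites and then circumvents by differentiation.

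The genuine gaps are in (b) and (c). In (b), ``points in lower-dimensional strata of $X$ lie on the common boundary of adjacent top cells and are handled by continuity'' is not a valid argument: a map that is a local diffeomorphism on a dense open union of top cells and continuous everywhere need not be locally injective at a lower stratum (think of $x\mapsto|x|$). Establishing local homeomorphicity at codimension-one and codimension-two strata is precisely the content of Claim~\ref{claim:main}.(1) and (2) together with the pentagon-move degree argument, and for $\F$ one must re-run that analysis on the deformed surface $\F(y)$ after identifying $d\F_y$ with infinitesimal strip deformations there, as the paper does; your bootstrap from Theorem~\ref{thm:main} applied to $\F(y)$ can supply this, but it requires matching the local piecewise structure of $\F$ near~$y$ with that of $f_{\F(y)}$ near the supporting face, not merely appealing to continuity or invariance of domain. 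In (c), your third case (``$x_n$ escapes every compact of $X$'' with $t_n$ bounded) silently assumes $x_n$ then converges to a point of $\overline{X}\smallsetminus X$; but $\overline{X}$ is not compact in general --- for the once-holed torus it is the Farey triangulation. You must also treat the case where the supports $|x_n|$ diverge in~$S$: the paper extracts a Hausdorff limit lamination, finds a simple closed geodesic nearly parallel to it, and uses the unit-peripheral normalization together with Proposition~\ref{prop:technical} to show the marked length-spectrum ratio of $\F(t_nx_n)$ degenerates to~$1$. Without this case, properness is not established.
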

\noindent In other words, any ``uniformly lengthening'' deformation of~$\rho$ can be realized as a strip deformation, and the realization is unique once the geodesic representatives~$\underline{\alpha}$, waists~$p_{\alpha}$, and widths~$m_{\alpha}$ are fixed for all arcs $\alpha\in\mathscr{A}$.

Note that the situation is very different when $\Gamma$ is the fundamental group of a compact surface: as mentioned above, in this case $j$ is Fuchsian and $\rho$ is necessarily non-Fuchsian \cite{thu86a}, up to switching the two factors.
As proved independently in \cite{gkw13} and \cite{dt13}, the subset $\Adm(\rho)$ of the Fricke--Teichm\"uller space (\ie the classical Teichm\"uller space in the orientable case) consisting of representations $j$ ``uniformly longer'' than~$\rho$ is always nonempty.
It would be interesting to obtain a parameterization of $\Adm(\rho)$ by some simple combinatorial object in this situation as well.
See the recent paper \cite{tho14} for an approach via harmonic maps.

By analogy with the Minkowski setting, it is natural to ask whether a free, properly discontinuous action on $\AdS$ admits a fundamental domain bounded by nice polyhedral surfaces.
In Section~\ref{sec:AdS}, we introduce piecewise geodesic surfaces in $\AdS$ that we call \emph{AdS crooked planes}, and prove:

\begin{theorem}\label{thm:AdS-crooked-planes}
Let $\rho,j\in\Hom(\Gamma,G)$ be the holonomy representations of two convex cocompact hyperbolic structures on a fixed surface and assume that $\Gamma^{\rho,j}$ acts properly discontinuously on $\AdS$.
Then $\Gamma^{\rho,j}$ admits a fundamental domain bounded by finitely many $\AdSS$ crooked planes.
\end{theorem}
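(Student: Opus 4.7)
The strategy is to mirror the proof of Theorem~\ref{thm:crooked-planes} in the Minkowski setting, with the macroscopic parameterization of Theorem~\ref{thm:main-macro} replacing the infinitesimal parameterization of Theorem~\ref{thm:main}, and $\AdSS$ crooked planes replacing Drumm's crooked planes. The guiding philosophy is that a strip deformation of~$\rho$ into~$j$ prescribes, for each arc along which the deformation is supported, exactly the amount by which two adjacent $\AdSS$ crooked planes must be displaced to become disjoint in $\AdS$, so that the total $\Gamma$-equivariant collection of crooked planes bounds a fundamental domain for $\Gamma^{\rho,j}$.

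Concretely, I would first apply Theorem~\ref{thm:main-macro}: since $\Gamma^{\rho,j}$ acts properly discontinuously on $\AdS$, the class $[j]$ lies in $\Adm(\rho)$ by criterion~\eqref{eqn:propcritAdS}, hence $[j]=\F(tx)$ for a unique $tx\in\mathrm{C}X$. Thus $j$ is realized as the strip deformation of~$\rho$ along an arc system $\underline{\alpha}_0,\dots,\underline{\alpha}_k$ that subdivides $S=\rho(\Gamma)\backslash\HH^2$ into topological disks, with specified waists $p_{\alpha_i}$ and widths $m_i>0$; generically $x$ lies in a top-dimensional simplex of $X$ and this arc system is a hyperideal triangulation. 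To each lift $\beta$ of an arc $\underline{\alpha}_i$ in $\HH^2$, I would associate an $\AdSS$ crooked plane $C_\beta\subset\AdS=G_0$, following the construction introduced in Section~\ref{sec:AdS}, whose \emph{stem} and \emph{wings} are determined by the lifted waist and by the strip width~$m_i$. The resulting family $\{C_\beta\}$ is $\Gamma$-equivariant by construction.

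The key step, and main obstacle, is to show that the $\AdSS$ crooked planes $C_\beta$ are pairwise disjoint in $\AdS$. Once disjointness is established, the connected components of the complement of $\bigcup_\beta C_\beta$ in $\AdS$ are in equivariant bijection with the disk cells of the lifted arc system in $\HH^2$, and any union of orbit representatives yields a fundamental domain bounded by finitely many $\AdSS$ crooked planes. For the disjointness statement, one seeks a geometric criterion of the form: \emph{the $\AdSS$ crooked planes $C_\beta, C_{\beta'}$ associated to two disjoint lifted arcs are disjoint in $\AdS$ if and only if an explicit inequality involving the strip widths and the relative position of the lifts in $\HH^2$ holds}. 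The content of Theorem~\ref{thm:main-macro}, namely $\F(tx)=[j]$, must then translate into precisely this inequality holding for every adjacent pair. In contrast with the Minkowski case, where the analogous disjointness step is a first-order computation (infinitesimal strip deformations only need push crooked planes apart to first order), here the analysis is global: one must control how $\AdSS$ crooked planes meet in the curved Lorentzian geometry of $G_0$, and verify that a macroscopic strip of width $m_i$ provides \emph{exactly} the separation that the $\AdSS$ crooked plane construction requires. This matching of ``strip width'' with ``crooked plane separation'' in $\AdS$ is where the main technical work of the proof lies.
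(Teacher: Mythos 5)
Your overall strategy is the right one, and it is essentially the paper's: use Theorem~\ref{thm:main-macro} to realize $j$ as a strip deformation of~$\rho$ along arcs cutting $S$ into disks, attach an $\AdSS$ crooked plane to each lifted arc, prove pairwise disjointness, and obtain a fundamental domain. However, the proposal leaves the two hardest steps as unexecuted wishes rather than arguments, so there is a genuine gap.

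First, the construction of the crooked plane $C_\beta$ is left unspecified. Saying the stem and wings are ``determined by the lifted waist and strip width'' does not say where to put the crooked plane in $G_0$. The paper makes this precise: using the formalism of Section~\ref{sec:formalism}, the strip deformation is encoded by a $(\rho,(\rho,j))$-equivariant map $\Phi\colon\widetilde{\mathscr{T}}\to G_0$ assigning an isometry to each tile, and to an edge $\tilde\alpha$ adjacent to tiles $\delta,\delta'$ one attaches $\mathsf{D}_{\tilde\alpha}=g_{\tilde\alpha}\,\ACP(\tilde\alpha)$ with $g_{\tilde\alpha}=\Phi(\delta)\sqrt{\Phi(\delta)^{-1}\Phi(\delta')}$, \ie the ``midpoint'' of the tile motions along the geodesic in $G_0$. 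Without an explicit choice like this one cannot even begin to check disjointness, and the choice is constrained (the paper notes any $t\in(0,1)$-interpolation works, but $\Phi(\delta)$ or $\Phi(\delta')$ alone does not give disjoint crooked planes).

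Second, and more importantly, the disjointness criterion is the core technical content and you merely postulate that one should exist. The paper states and proves Proposition~\ref{prop:disjoint-CP-AdS}: if $\ell,\ell'$ are disjoint geodesics transversely oriented away from each other, then $g\,\overline{\ACHS^+(\ell)}\subset g'\,\ACHS^-(\ell')$ for all $g\in\exp(\mathrm{SQ}(\ell))$, $g'\in\exp(\mathrm{SQ}(\ell'))$. This reduces to Lemma~\ref{lem:parallel-CP-AdS}, whose proof is a quantitative hyperbolic trigonometry estimate, \eqref{eqn:g-moves-p-away}, showing that a translation along an axis perpendicular to $\ell$ pushes points of $L^+$ a definite amount further from points of $L^-$. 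Your phrase ``verify that a macroscopic strip of width $m_i$ provides exactly the separation that the $\AdSS$ crooked plane construction requires'' describes the desired conclusion without supplying it. Also, by Observation~\ref{obs:realize-strips} adapted to the macroscopic case, the relative motion $\Psi(\tilde\alpha)=\Phi(\delta)^{-1}\Phi(\delta')$ is automatically in $\exp(\mathrm{SQ}(\tilde\alpha))$, so no extra inequality needs to be matched; the hypothesis of Proposition~\ref{prop:disjoint-CP-AdS} is simply satisfied.

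Third, disjointness alone gives a closed region $\mathsf{R}$ that is a fundamental domain for the $\Gamma^{\rho,j}$-action on $\Gamma^{\rho,j}\cdot\mathsf{R}$, but one still must show $\Gamma^{\rho,j}\cdot\mathsf{R}=\AdS$. The paper does this via Lemma~\ref{lem:explode-in-p-AdS}, a non-accumulation estimate. Your proposal omits this step entirely.

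Finally, your contrast of the Minkowski case as ``a first-order computation'' is misleading: in both settings the crooked planes are displaced by genuine (finite) amounts — vectors in $\RR^{2,1}$ in one case, group elements of $G_0$ in the other — and the disjointness statements (Propositions~\ref{prop:disjoint-CP-R21} and~\ref{prop:disjoint-CP-AdS}) are global inclusions of half-spaces, proved by essentially the same type of argument (estimating a ``component along a line''). The word ``infinitesimal'' in the Minkowski theorem refers only to $u$ being a tangent vector to Teichm\"uller space, not to the crooked-plane geometry.
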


Theorem~\ref{thm:AdS-crooked-planes} provides a new proof of the tameness (obtained in \cite{dgk13}) of complete $\AdSS$ $3$-manifolds of finite type, in this special case.

In contrast with Theorem~\ref{thm:AdS-crooked-planes}, in \cite{dgk-crooked-AdS} we construct examples of pairs $(\rho,j)$ with $j$ convex cocompact and $\rho$ noninjective or nondiscrete such that the group $\Gamma^{\rho,j}$ acts properly discontinuously on $\AdS$ but does \emph{not} admit any fundamental domain bounded by disjoint crooked planes.
It would be interesting to determine exactly which proper actions admit such fundamental domains.
The examples of \cite{dgk-crooked-AdS} build on a disjointness criterion that we establish there for $\AdSS$ crooked planes (see Proposition~\ref{prop:disjoint-CP-AdS} of the current paper for a sufficient condition).

\subsection{Organization of the paper}

Section~\ref{sec:metric-estim} is devoted to some basic estimates for infinitesimal strip deformations.
These allow us, in Section~\ref{sec:reduction}, to reduce the proofs of Theorems~\ref{thm:main} and~\ref{thm:main-macro} to Claim~\ref{claim:main}, about the behavior of the map $f$ at faces of codimension zero and one.
We prove Claim~\ref{claim:main} in Section~\ref{sec:codim01}, after introducing some formalism in Section~\ref{sec:formalism}.
In Section~\ref{sec:ex} we give some basic examples of the tiling of the admissible cone produced by Theorem~\ref{thm:main}.
Finally, Sections \ref{sec:crooked-planes} and~\ref{sec:AdS} are devoted to the proofs of Theorems~\ref{thm:crooked-planes} (the Crooked Plane Conjecture) and~\ref{thm:AdS-crooked-planes} (its anti-de Sitter counterpart) using strip deformations.
In Appendix~\ref{sec:remarks} we make some remarks about the choices involved in the definition of the map~$f$, in relation with Conjecture~\ref{conj}; this appendix is not needed anywhere in the paper.

\subsection*{Acknowledgments}

We would like to thank Thierry Barbot, Virginie Cha\-rette, Todd Drumm, and Bill Goldman for interesting discussions related to this work, as well as Fran\c{c}ois Labourie and Yair Minsky for igniting remarks.
We are grateful to the Institut Henri Poincar\'e in Paris and to the Centre de Recherches Math\'ematiques in Montreal for giving us the opportunity to work together in stimulating environments.

\section{Metric estimates for (infinitesimal) strip deformations}\label{sec:metric-estim}

In this section we provide some estimates for the effect, on curve lengths, of a (possibly infinitesimal) strip deformation, especially when supported on long arcs.

Let $S$ be a convex cocompact hyperbolic surface of infinite volume, with fundamental group $\Gamma=\pi_1(S)$ and holonomy representation $\rho\in\Hom(\Gamma,G)$.
Let $\T\subset\Hom(\Gamma,G)/G$ be the corresponding Fricke--Teichm\"uller space (Definition~\ref{def:Fricke-Teich}), whose tangent space $T_{[\rho]}\T$ identifies with the cohomology group $H^1_{\rho}(\Gamma,\g)$.
For any $\gamma\in\Gamma$ and any $\tau\in\Hom(\Gamma,G)$ we set
\begin{equation}\label{eqn:def-lambda}
\lambda_{\gamma}(\tau) := \inf_{p\in\HH^2} \, d(p,\tau(\gamma)\cdot p),
\end{equation}
where $d$ is the hyperbolic metric on~$\HH^2$: this is the translation length of $\tau(\gamma)$ if $\tau(\gamma)\in G$ is hyperbolic, and $0$ otherwise.
We thus obtain a function $\lambda_{\gamma} : \T\rightarrow\RR_+$, whose differential is denoted $\D\lambda_{\gamma} : T\T\rightarrow\RR$.

As in Section~\ref{subsec:arc-complex}, for each arc $\alpha\in\mathscr{A}$ of~$S$ we fix a geodesic representative $\underline{\alpha}$ of~$\alpha$, a point $p_{\alpha}\in\underline{\alpha}$, and a positive number $m_{\alpha}>0$, such that the lifts of the $\underline{\alpha}$ have minimal intersection numbers in~$\HH^2 \cup \partial_{\infty}\HH^2$.

\begin{remark}\label{rem:connected-choices}
The space of all such choices of $(\underline{\alpha},p_{\alpha},m_{\alpha})_{\alpha\in\mathscr{A}}$ is connected.
\end{remark}

\noindent
Indeed, one system of minimally intersecting geodesic representatives of the arcs is given by the geodesics orthogonal to the boundary of the convex core, and any other system is obtained from this one by pushing the endpoints at infinity of the geodesics forward by an isotopy of the circles at infinity of the funnels.

Let $\overline{X}$ be the arc complex of~$S$.
The goal of this section is to set up some notation and establish estimates for the map ${\boldsymbol f} : \overline{X}\rightarrow H^1_{\rho}(\Gamma,\g)$ of Section~\ref{subsec:arc-complex} and its projectivization $f : \overline{X}\rightarrow\PP(H^1_{\rho}(\Gamma,\g))$, which realize infinitesimal strip deformations with respect to our choice of $(\underline{\alpha},p_{\alpha},m_{\alpha})_{\alpha\in\mathscr{A}}$.

\subsection{Variation of length of geodesics under strip deformations}\label{subsec:var-length}

For $x\in\overline{X}$, we denote by $|x|\subset S$ the \emph{support} of~$x$, \ie the union of the geodesic arcs $\underline{\alpha}$ corresponding to the vertices of the smallest cell of~$\overline{X}$ containing~$x$.
Let $W_x: |x| \to \RR_+$ be the \emph{strip width function} mapping any $p\in\underline{\alpha}\subset |x|$ to
\begin{equation}\label{eqn:strip-width-function}
W_x(p) := w_{\alpha} \cosh d(p,p_{\alpha}),
\end{equation}
where $w_{\alpha}$ is the width (as measured at the narrowest point) of the strip to be inserted along~$\underline{\alpha}$, and the distance $d(\cdot, \cdot)$ is implicitly measured along~$\underline{\alpha}$.
Note that $w_{\alpha}$ is $m_{\alpha}$ times the weight of $\alpha$ in the barycentric expression for $x\in\overline{X}$, and $W_x(p)$ is the length of the path crossing the strip at~$p$ at constant distance from the waist segment.
For $p\in \HH^2$, we denote by $\measuredangle_p$ the measure of angles of geodesics at~$p$, valued in $[0,\pi/2]$.
We first make the following elementary observation.

\begin{observation}\label{obs:length-deriv}
For any $\gamma\in\Gamma\smallsetminus\{ e\}$ and any $x\in\overline{X}$,
\begin{equation}\label{eqn:effect}
\D\lambda_{\gamma}\big({\boldsymbol f}(x)\big) = \sum_{p\in \underline{\smash{\gamma}}\cap |x|} W_x(p) \, \sin \measuredangle_p(\underline{\smash{\gamma}},|x|) \, \geq 0,
\end{equation}
where $\underline{\smash{\gamma}}$ is the geodesic representative of $\gamma$ on~$S$.
\end{observation}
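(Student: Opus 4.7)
The plan is to reduce to the case of a single arc by linearity and additivity, describe the resulting strip-deformation cocycle as a sum of infinitesimal hyperbolic translations, and apply the standard formula for the derivative of translation length; the key geometric input is a short application of the hyperbolic Pythagorean theorem.

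First, since $\boldsymbol{f}$ is defined by barycentric interpolation and $\D\lambda_\gamma$ is linear on $H^1_\rho(\Gamma,\g)$, both sides of \eqref{eqn:effect} are additive over the arcs $\underline{\alpha}\subset |x|$: the left side by linearity, the right side because $\underline{\gamma}\cap|x|=\bigsqcup_{\alpha\in|x|}(\underline{\gamma}\cap\underline{\alpha})$. It therefore suffices to treat the case where $|x|=\underline{\alpha}$ is a single geodesic arc with waist $p_\alpha$ and width $w=w_\alpha>0$. On the universal cover, inserting a strip of width $tw$ at each lift $\ell$ of $\underline{\alpha}$ pushes the far side of $\ell$ off by the hyperbolic translation $T_{tw,\tilde{\eta}_\ell}$, where $\tilde{\eta}_\ell$ is the geodesic perpendicular to $\ell$ at the lifted waist $\tilde{p}_{\alpha,\ell}$. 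Differentiating this family of motions at $t=0$ (the BCH corrections from composing the translations being $O(t^2)$), the strip-deformation cocycle becomes
$$u(\gamma) \;=\; \sum_\ell T_\ell,$$
with the sum ranging over crossings of one $\rho(\gamma)$-period of the axis $A_\gamma$ with lifts of $\underline{\alpha}$, and $T_\ell\in\g$ the infinitesimal generator of $T_{w,\tilde{\eta}_\ell}$.

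Next, I would invoke the standard length-derivative formula $\D\lambda_\gamma(u)=B(\hat{A}_\gamma,u(\gamma))$, where $\hat{A}_\gamma\in\g$ is the unit axial generator of $\rho(\gamma)$ and $B$ is the $\Ad$-invariant bilinear form on $\g$ (half the Killing form) normalized so that $B(\hat{A}_\gamma,\hat{A}_\gamma)=1$. This is a one-line consequence of Baker--Campbell--Hausdorff: writing $\log(\exp(tu(\gamma))\rho(\gamma))=\lambda_\gamma\hat{A}_\gamma+tv+O(t^2)$, the correction $v-u(\gamma)$ lies in the image of $\ad_{\hat{A}_\gamma}$, hence is $B$-orthogonal to $\hat{A}_\gamma$ by $\Ad$-invariance, so $\D\lambda_\gamma(u)=B(\hat{A}_\gamma,v)=B(\hat{A}_\gamma,u(\gamma))$. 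Applied to our strip-deformation cocycle this gives $\D\lambda_\gamma(\boldsymbol{f}(x))=\sum_\ell B(\hat{A}_\gamma,T_\ell)$.

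The main geometric step is the identity $B(\hat{A}_\gamma,T_\ell)=W_x(p)\sin\measuredangle_p(\underline{\gamma},\underline{\alpha})$, where $p$ is the image in $S$ of the crossing of $A_\gamma$ with $\ell$. Under the identification $\g\simeq\RR^{2,1}$, unit hyperbolic elements correspond (via polar duality) to unit spacelike vectors, and for two of them the pairing reads $\cos\phi$ when the two axes meet with angle $\phi$, and $\pm\cosh d^*$ when they are ultraparallel with common perpendicular of length $d^*$. In the intersecting case, set $q=A_\gamma\cap\tilde{\eta}_\ell$ and consider the triangle $(p,\tilde{p}_{\alpha,\ell},q)$, which is right-angled at $\tilde{p}_{\alpha,\ell}$ since $\ell\perp\tilde{\eta}_\ell$. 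Writing $\theta=\measuredangle_p(A_\gamma,\ell)$, the right-triangle identities $\sin\theta=\sinh d(\tilde{p}_{\alpha,\ell},q)/\sinh d(p,q)$ and $\cos\phi=\tanh d(\tilde{p}_{\alpha,\ell},q)/\tanh d(p,q)$, combined with hyperbolic Pythagoras $\cosh d(p,q)=\cosh d(p,\tilde{p}_{\alpha,\ell})\cdot\cosh d(\tilde{p}_{\alpha,\ell},q)$, give $\cos\phi=\cosh d(p,\tilde{p}_{\alpha,\ell})\sin\theta$; a parallel Lambert-quadrilateral computation handles the ultraparallel case. Multiplying by $w$ and summing over crossings yields the equality, and non-negativity is automatic since $W_x(p)>0$ and $\measuredangle_p\in[0,\pi/2]$. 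The main obstacle is bookkeeping signs: one must verify that $B(\hat{A}_\gamma,T_\ell)$ is genuinely $+W_x(p)\sin\theta$ (not its negative) in both geometric cases, consistently with the convention that $w_\alpha>0$ means the strip widens rather than pinches.
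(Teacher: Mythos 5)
Your proposal is correct, but it takes a genuinely different route from the paper. After the shared first step (reduce to a single arc by linearity), you invoke the general length-derivative formula $\D\lambda_\gamma(u)=B(\hat A_\gamma,u(\gamma))$ (justified cleanly via BCH and $\Ad$-invariance), then compute each pairing $B(\hat A_\gamma,T_\ell)$ geometrically, using the right-triangle identity $\cos\phi=\cosh d(p,\tilde p_{\alpha,\ell})\sin\theta$ (which indeed follows from $\cos\phi=\tanh(\mathrm{adj})/\tanh(\mathrm{hyp})$, $\sin\theta=\sinh(\mathrm{opp})/\sinh(\mathrm{hyp})$ and hyperbolic Pythagoras). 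The paper instead conjugates $\rho(\gamma)$ to diagonal form, writes $F(tx)(\gamma)$ as an explicit product of $a_{\ell_i}$'s and conjugated $b_t$'s, and differentiates $\lambda(g)=2\,\mathrm{arccosh}(|\mathrm{tr}\,g|/2)$. Your approach is more conceptual and explains structurally why a $\sin\theta$ and a $\cosh d$ appear, but it forces a case split (intersecting vs.\ ultraparallel axes $A_\gamma$ and $\tilde\eta_\ell$, the latter occurring precisely when $\cosh d(p,\tilde p_{\alpha,\ell})\sin\theta>1$) plus a sign check; you correctly flag both. The paper's direct trace computation avoids that case split entirely and derives the length-derivative formula in passing rather than quoting it, at the cost of being a more opaque matrix manipulation. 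Both yield the same expression once one notes $\sin\measuredangle_p=\sin\theta_i$ regardless of which of the two supplementary angles is taken.
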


Formula \eqref{eqn:effect} is analogous to the cosine formula expressing the effect of an earthquake on the length of a closed geodesic (see \cite{ker83}), and is proved similarly.
The difference is that the angle $\measuredangle_p(\underline{\smash{\gamma}},|x|)$ in the formula for earthquakes is replaced by its complement to $\pi/2$ in the formula for strip deformations, changing the cosine into a sine.
In general, strip deformations of~$\rho$ should be thought of as analogues of earthquakes, where instead of sliding against itself, the surface is pushed apart in a direction orthogonal to each geodesic arc of the support.
In \eqref{eqn:effect}, unlike in the formula for earthquakes, the contribution of each intersection point $p$ depends, not only on the angle, but also (via $W_x$) on $p$ itself.

\begin{proof}[Proof of Observation~\ref{obs:length-deriv}]
Up to passing to a double covering, we may assume that $S$ is orientable and $\rho\in\Hom(\Gamma,G)$ takes values in $G_0=\PSL_2(\RR)$.
By linearity, it is sufficient to prove the formula when $x$ is a vertex $\alpha\in\mathscr{A}$ of the arc complex~$\overline{X}$ and the corresponding strip width $w_{\alpha}$ is~$1$.
For $t\in\RR$, we set
$$a_t: = \! \begin{pmatrix} e^{t/2} & 0 \\ 0 & e^{-t/2}\end{pmatrix} ,\
b_t := \! \begin{pmatrix} \cosh\frac{t}{2} & \sinh\frac{t}{2} \\ \sinh\frac{t}{2} & \cosh\frac{t}{2} \end{pmatrix} ,\
r_t := \! \begin{pmatrix} \;\;\, \cos\frac{t}{2} & \sin\frac{t}{2} \\ -\sin\frac{t}{2} & \cos\frac{t}{2} \end{pmatrix} ,$$
where all matrices are understood to be elements of $G_0=\PSL_2(\RR)$.
Up to conjugation we may assume that $\rho(\gamma)=a_{\lambda_{\gamma}(\rho)}$.
Suppose the oriented geodesic loop $\underline{\smash{\gamma}}$ crosses the geodesic representative $\underline{\alpha}$ at points $q_1,\dots,q_k$, in this order.
For $1\leq i \leq k$, we use the following notation:
\begin{itemize}
  \item $\ell_i>0$ is the distance between $q_{i-1}$ and $q_i$ along~$\underline{\smash{\gamma}}$, with the convention that $q_0=q_k$,
  \item $d_i\in\RR$ is the signed distance from $q_i$ to the waist $p_{\alpha}$ along~$\underline{\alpha}$, for the orientation of $\underline{\alpha}$ towards the left of $\underline{\smash{\gamma}}$ at~$q_i$,
  \item $\theta_i\in (0,\pi)$ is the angle, at~$q_i$, between the oriented geodesics $\underline{\smash{\gamma}}$ and~$\underline{\alpha}$, for the orientation of~$\underline{\alpha}$ towards the left of $\underline{\smash{\gamma}}$ at~$q_i$.
\end{itemize}
Consider the map $F : \overline{\mathrm{C}X}\rightarrow\T\subset\Hom(\Gamma,G)/G$ of Section~\ref{subsec:intro-AdS}.
Then $F(tx)$ lifts to a homomorphism $\Gamma\rightarrow G_0$ sending $\gamma$ to
\begin{equation}\label{eqn:F(tx)(gamma)}
a_{\ell_1} \big(r_{\theta_1} a_{d_1} \, b_t \, a_{-d_1} r_{-\theta_1}\big) \, a_{\ell_2} (\dots) \, a_{\ell_k} \big(r_{\theta_k} a_{d_k} \, b_t \, a_{-d_k} r_{-\theta_k}\big) \in G_0 .
\end{equation}
Note that, ignoring nondiagonal entries,
$$\left . \frac{\D}{\D t} \right |_{t=0} \big(r_{\theta_i} a_{d_i} \, b_t \, a_{-d_i} r_{-\theta_i}\big) = \frac{\sin\theta_i \cdot \cosh d_i}{2} \begin{pmatrix} 1 & * \\ * & - 1 \end{pmatrix} .$$
Therefore, if we set $\ell:=\ell_1+\dots+\ell_k=\lambda_{\gamma}(\rho)$, then \eqref{eqn:F(tx)(gamma)} is equal to
\begin{eqnarray*}
& \rho(\gamma) + t \left( \sum_{i=1}^k \textstyle{\frac{\sin \theta_i \cdot \cosh d_i}{2}} \, a_{\ell_1+\dots+\ell_i} \begin{pmatrix} 1 & * \\ * & -1 \end{pmatrix} \, a_{\ell_{i+1}+\dots+\ell_k} \right) \! + o(t) \\
= & \begin{pmatrix} e^{\ell/2} & 0 \\ 0 & e^{-\ell/2} \end{pmatrix} + \frac{t}{2} \left( \sum_{i=1}^k \sin \theta_i \cdot \cosh d_i  \begin{pmatrix} e^{\ell/2} & * \\ * & -e^{-\ell/2} \end{pmatrix} \right) + o(t) .
\end{eqnarray*}
By differentiating the formula $\lambda(g)=2\,\mathrm{arccosh}(|\mathrm{tr}(g)|/2)$ for hyperbolic $g\in G_0=\PSL_2(\RR)$ (where $\lambda(g)$ is the translation length of $g$ in~$\HH^2$), we find
$$\D\lambda_{\gamma}\big( {\boldsymbol f}(x)\big) = \sum_{i=1}^k  \sin \theta_i \cdot \cosh d_i = \sum_{i=1}^k W_x(q_i) \, \sin \measuredangle_{q_i}(\underline{\smash{\gamma}},|x|).\qedhere$$
\end{proof}

\subsection{Angles at the boundary of the convex core}\label{subsec:angle-convex-core}

Let $\Delta\subset\mathscr{A}$ be a hyperideal triangulation of~$S$, \ie a set of $3|\chi|$ arcs corresponding to a top-dimensional face of the arc complex~$\overline{X}$ (here $\chi\in -\NN$ is the Euler characteristic of~$S$).
Then $\Delta$ divides $S$ into $2|\chi|$ connected components (hyperideal triangles).
Let $\partial S$ denote the geodesic boundary of the convex core of~$S$.

\begin{proposition}\label{prop:noslant}
For any choice of minimally intersecting geodesic representatives $(\underline{\alpha})_{\alpha\in\mathscr{A}}$, there exists $\theta_0>0$ such that all the $\underline{\alpha}$ intersect $\partial S$ at an angle $\geq\theta_0$ (measured in $[0,\pi/2]$).
Moreover, $\theta_0$ can be taken to depend continuously on the holonomy~$\rho$ and on the choice of geodesic representatives of the arcs of any fixed hyperideal triangulation $\Delta$ of~$S$.
\end{proposition}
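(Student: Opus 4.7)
The plan is to reduce to a fixed hyperideal triangulation $\Delta$ and run a combinatorial ``slot'' argument. Since $\Delta$ consists of finitely many arcs, the minimum $\theta_\Delta$ of the angles at which the representatives $(\underline{\delta})_{\delta\in\Delta}$ cross $\partial S$ is positive, and depends continuously on $\rho$ and on the choice of the $\underline{\delta}$. The goal is then to show that this finite-combinatorial bound propagates to all of $\mathscr{A}$.

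For an arbitrary arc $\alpha \in \mathscr{A}\setminus\Delta$, I would observe that on each boundary circle at infinity $C_j = \partial_\infty F_j$ of a funnel~$F_j$, the endpoints of $\Delta$-arcs partition $C_j$ into finitely many open \emph{slots}. Because the family $(\underline{\alpha})_{\alpha\in\mathscr{A}}$ intersects minimally---counting ideal intersections, which forces all arc-endpoints on $C_j$ to be distinct and to appear in a topologically determined cyclic order---the endpoint of $\alpha$ on $C_j$ is confined to a specific slot depending only on the isotopy class of $\alpha$ (and on the $\underline{\delta}$). Consequently, the two endpoints of $\alpha$ land in one of finitely many slot pairs $(s_1,s_2)$.

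I would then finish by a compactness argument. For each slot pair $(s_1,s_2)$, the angle function $(x_1,x_2) \mapsto \theta(x_1,x_2)$ assigning to a pair of endpoints the angle of the corresponding geodesic with the relevant boundary component of the convex core extends continuously to the compact product $\overline{s}_1\times\overline{s}_2$ (closures taken in the $C_j$'s). At boundary points of the slot, where an endpoint would coincide with a $\Delta$-endpoint, the limit geodesic still has both endpoints away from the limit set $\Lambda$, hence crosses $\partial S$ transversally, so $\theta > 0$ everywhere on $\overline{s}_1\times\overline{s}_2$. Compactness yields a positive minimum $\theta_0(s_1,s_2) > 0$, and setting $\theta_0 := \min_{(s_1,s_2)}\theta_0(s_1,s_2)$ over the finite list of slot pairs gives the desired uniform bound. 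Continuity of $\theta_0$ in $\rho$ and in the $\underline{\delta}$ then follows from continuity of each $\theta_0(s_1,s_2)$ in these data (as long as the $\Delta$-endpoints remain distinct on each $C_j$, which is an open condition guaranteed by minimal intersection).

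The main obstacle I expect is the topological slot-assignment step: verifying rigorously that in a minimally-intersecting family (with ideal intersections counted) the cyclic order of arc-endpoints on each $C_j$ is a topological invariant of the configuration, so that every $\alpha$ is confined to a predetermined slot. This should follow from the standard fact that geodesic arcs realize minimal geometric intersection in their isotopy class, combined with a careful analysis of how intersection numbers---including at infinity---are encoded by cyclic positions of endpoints at infinity; but the bookkeeping of ideal intersections near the ends of the gap, and the check that slot boundaries give genuinely continuous limits of the angle function, are the points requiring the most care.
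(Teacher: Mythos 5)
Your slot idea captures part of what the paper does, but the compactness step, as stated, does not go through. The ``angle function'' $(x_1,x_2)\mapsto\theta(x_1,x_2)$ on $\overline{s}_1\times\overline{s}_2\subset C_j\times C_{j'}$ is not well-defined: a pair of endpoints on the \emph{quotient} circles at infinity of the funnels does not determine a geodesic of $S$ (nor its angle with $\partial S$), since infinitely many distinct isotopy classes of arcs of $S$ share the same pair $(x_1,x_2)$, one for each choice of relative lift of $x_2$ to $\partial_\infty\HH^2$, and these cross $\eta_j$ at very different angles. If instead one lifts to $\HH^2$ to restore well-definedness, fixing a lift $\tilde{\eta}_j$ of $\eta_j$ and a lift $\tilde{\alpha}$ of $\underline{\alpha}$ crossing it, then the near endpoint of $\tilde{\alpha}$ is confined to finitely many slots modulo the cyclic stabilizer $\langle\gamma_j\rangle$, but the far endpoint can a priori lie in any of the infinitely many lifts of $s_2$, which accumulate on the fixed points of $\gamma_j$; as the far endpoint approaches an endpoint of $\tilde{\eta}_j$, the crossing angle genuinely tends to $0$. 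So one still needs to show that the far endpoint cannot run off in this way, and nothing in your proposal does so.

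This is precisely what the paper's argument supplies, and it is the essential idea. One takes a single $\Delta$-arc $\underline{\beta}$ exiting through $\eta_j$ and the $\langle\gamma_j\rangle$-orbit $\{\tilde{\beta}_i\}_{i\in\ZZ}$ of its lifts crossing $\tilde{\eta}_j$; these all meet $\tilde{\eta}_j$ at the \emph{same} angle. Comparing with the auxiliary representatives $\alpha^*,\beta^*$ orthogonal to $\partial S$ (whose two endpoints at infinity sit symmetrically across $\tilde{\eta}_j$), one finds that the relevant lift of $\alpha^*$ has \emph{both} endpoints between those of $\beta^*_i$ and $\beta^*_{i+1}$ for one and the same index $i$; minimality of intersection numbers (including ideal ones) then transports this sandwich to $\tilde{\alpha}$ relative to $\tilde{\beta}_i,\tilde{\beta}_{i+1}$. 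This coupling of the near and far endpoints around a common index $i$ is what bounds the angle away from $0$, uniformly in $\alpha$. Your argument confines each endpoint of $\underline{\alpha}$ independently to a slot of its quotient circle, but never ties the two endpoints together; that coupling is the missing step, and without it the compactness fails.
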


\begin{proof}
This is a consequence of the minimal intersection numbers of the~$\underline{\alpha}$.
Indeed, fix a hyperideal triangulation $\Delta$ of~$S$, consider one component $\underline{\smash{\eta}}$ of $\partial S$, and choose one geodesic arc $\underline{\smash{\beta}}$ of $\Delta$ that exits $\underline{\smash{\eta}}$ at one end.
In the universal cover~$\HH^2$, a lift $\tilde{\eta}$ of~$\underline{\smash{\eta}}$ is intersected by a collection $\{ \tilde{\beta}_i\}_{i\in\ZZ}$ of naturally ordered, pairwise disjoint lifts of~$\underline{\smash{\beta}}$.
Each $\tilde{\beta}_i$ escapes~through two components ($\tilde{\eta}$ and another one) of the lift of $\partial S$; let $\beta^*_i$ denote the geodesic arc orthogonal to these two components.
The $\beta^*_i$ are lifts of the geodesic arc $\beta^*$ of~$S$ which is in the same class as~$\underline{\smash{\beta}}$ but intersects $\partial S$ at right angles.

Consider another geodesic arc $\underline{\alpha}\neq\underline{\smash{\beta}}$ of~$S$, which also crosses~$\underline{\smash{\eta}}$.
Let~$\alpha^*$ be the geodesic representative of $\alpha$ that is orthogonal to $\partial S$.
In~$\HH^2$, a lift of $\alpha^*$ that intersects $\tilde{\eta}$ lies entirely between $\beta^*_i$ and $\beta^*_{i+1}$, for some $i\in \ZZ$.
By minimality of the intersection numbers, the corresponding lift of $\underline{\alpha}$ lies entirely between $\tilde{\beta}_i$ and~$\tilde{\beta}_{i+1}$.
Since the angles at which the $\tilde{\beta}_i$ intersect~$\tilde{\eta}$ are all the same, the angle at which $\underline{\alpha}$ intersects~$\underline{\smash{\eta}}$ is bounded away from~$0$, independently of~$\alpha$.
We conclude by repeating for all boundary components~$\underline{\smash{\eta}}$.
\end{proof}

\subsection{The unit-peripheral normalization}

Given a choice $(\underline{\alpha},p_{\alpha})_{\alpha\in\mathscr{A}}$ of geodesic representatives and waists of the arcs of~$S$, we now discuss a specific choice of widths $(m_{\alpha})_{\alpha\in\mathscr{A}}$, which we call the unit-peripheral normalization.

\begin{remark}\label{rem:change-m-alpha}
For any systems $(m_{\alpha})_{\alpha\in\mathscr{A}}$ and $(m'_{\alpha})_{\alpha\in\mathscr{A}}$ of widths, there is a cell-preserving, cellwise projective homeomorphism $h : \overline{X}\rightarrow\overline{X}$ such that
$$f_m = f_{m'} \circ h,$$
where $f_m : \overline{X}\rightarrow\PP(H^1_{\rho}(\Gamma,\g))$ (\resp $f_{m'} : \overline{X}\rightarrow\PP(H^1_{\rho}(\Gamma,\g))$) is the map defined in Section~\ref{subsec:arc-complex} with respect to $(\underline{\alpha},p_{\alpha},m_{\alpha})_{\alpha\in\mathscr{A}}$ (\resp to $(\underline{\alpha},p_{\alpha},m'_{\alpha})_{\alpha\in\mathscr{A}}$). Such a map~$h$ preserves~$X$.
\end{remark}

Choosing all the widths $m_{\alpha}$ so that
\begin{equation}\label{eqn:normal}
\sum_{p \in |x| \cap \partial S} W_x(p) \, \sin \measuredangle_p(\partial S,|x|) = 1
\end{equation}
for all $x\in\overline{X}$ (or equivalently for all vertices $x=\alpha\in\mathscr{A}$) corresponds,~by Observation~\ref{obs:length-deriv}, to asking all infinitesimal strip deformations associated~with the choice of $(\underline{\alpha},p_{\alpha},m_{\alpha})_{\alpha\in\mathscr{A}}$ to increase the total length of $\partial S$ at unit rate.
In this case ${\boldsymbol f}(\overline{X})$ is contained in some affine hyperplane of $T_{[\rho]}{\T}=H^1_{\rho}(\Gamma,\g)$ (namely $\sum_{i=1}^m \D\lambda_{\gamma_i}=1$ where $\gamma_1,\dots,\gamma_m\in\Gamma$ correspond to the connected components of $\partial S$).

\begin{definition}\label{def:unit-peripheral-norm}
We call \eqref{eqn:normal} the \emph{unit-peripheral normalization}.
\end{definition}

In the unit-peripheral normalization, by \eqref{eqn:effect}, the bound $\theta_0$ of Proposition~\ref{prop:noslant} satisfies
\begin{equation}\label{eqn:boundwidth}
W_x(p) \leq 1/\sin \theta_0
\end{equation}
for all $x\in\overline{X}$ and $p\in\partial S$.
By convexity of $\cosh$ in the formula \eqref{eqn:strip-width-function}, the inequality \eqref{eqn:boundwidth} holds in fact for all $p$ in the convex core of~$S$.

In Section~\ref{subsec:f-bounded} we shall bound the length variation of closed geodesics under infinitesimal strip deformations using Observation~\ref{obs:length-deriv} and the following.

\begin{proposition}\label{prop:technical}
In the unit-peripheral normalization, there exists $K>0$ (depending on~$\rho$ and on the choice of geodesic representatives $(\underline{\alpha})_{\alpha\in\mathscr{A}}$ and waists $p_{\alpha}\in\underline{\alpha}$) such that for any $\gamma\in\Gamma\smallsetminus\{ e\}$ and any $x\in\overline{X}$,
$$\sum_{p\in\underline{\smash{\gamma}}\cap |x|} W_x(p) \leq K \, \vartheta_{|x|}(\gamma) \, \lambda_{\gamma}(\rho),$$
where $\underline{\smash{\gamma}}$ is the closed geodesic of~$S$ corresponding to~$\gamma$ and
$$\vartheta_{|x|}(\gamma) := \max_{p\in\underline{\smash{\gamma}}\cap |x|} \, \measuredangle_p(\underline{\smash{\gamma}}, |x|).$$
\end{proposition}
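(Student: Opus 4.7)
The plan is to decouple the sum into a pointwise bound on each summand and a bound on the number of summands, and then combine them.

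\textbf{Step 1: Uniform bound on $W_x(p)$ in the convex core.} Since $\underline{\smash{\gamma}}$ is a closed geodesic of $S$, it lies entirely in the convex core; in particular, every intersection point $p \in \underline{\smash{\gamma}} \cap |x|$ lies in the convex core. Proposition~\ref{prop:noslant} supplies an angle $\theta_0 > 0$, depending only on $\rho$ and the chosen representatives $(\underline{\alpha})_{\alpha\in\mathscr{A}}$, at which every arc meets $\partial S$. In the unit-peripheral normalization, inequality \eqref{eqn:boundwidth} then yields
$$W_x(p) \,\leq\, 1/\sin\theta_0 \qquad\text{for every } p \in \underline{\smash{\gamma}}\cap|x|,$$
so $\sum_{p} W_x(p) \leq N/\sin\theta_0$, where $N := \#(\underline{\smash{\gamma}} \cap |x|)$.

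\textbf{Step 2: Bounding the number of intersections by $\vartheta\cdot\lambda$.} It remains to show that $N \leq C\,\vartheta_{|x|}(\gamma)\,\lambda_{\gamma}(\rho)$ for some $C > 0$ depending only on $\rho$ and the representatives. Lift $\underline{\smash{\gamma}}$ to its axis $\tilde{\gamma}\subset\HH^2$ under $\rho(\gamma)$; within one fundamental domain of length $\lambda := \lambda_{\gamma}(\rho)$, label the consecutive crossings $p_1,\dots,p_N$ of $\tilde{\gamma}$ with lifts $\tilde{L}_1,\dots,\tilde{L}_N$ of $|x|$-arcs, at angles $\theta_i \leq \vartheta := \vartheta_{|x|}(\gamma)$, and set $\ell_i := d(p_i, p_{i+1})$, so that $\sum_i \ell_i = \lambda$. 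Pairs of arcs in $|x|$ are disjoint on $S$, hence the lifts $\tilde{L}_i$, $\tilde{L}_{i+1}$ are disjoint geodesic lines in $\HH^2$.

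The geometric heart of the argument is to produce a lower bound $\ell_i \geq c/\vartheta$ for some constant $c > 0$, after which $N \leq \vartheta\lambda/c$ follows by summing. For this we use the sine identity $\sinh d(q,\tilde{\gamma}) = \sinh r \sin\theta$ in the right triangle with hypotenuse a segment on $\tilde{L}_i$ of length $r$ emanating from $p_i$: this shows that each $\tilde{L}_j$ sits in a thin $\sim r\sin\theta_j$-neighborhood of $\tilde{\gamma}$ at distance $r$ from $p_j$, so disjointness of the two thin neighborhoods forces the crossings $p_i, p_{i+1}$ to be far apart along $\tilde{\gamma}$ whenever $\theta_i$ and $\theta_{i+1}$ are small. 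Where disjointness alone gives only a weaker bound (the ``nested tracking'' regime, where $\tilde{L}_i$ and $\tilde{L}_{i+1}$ have nearly equal small angles), one exploits the fact that the lifts $\tilde{L}_i$ are obtained from finitely many fixed geodesics $\underline{\alpha}$ by the discrete, convex cocompact action $\rho(\Gamma)$, together with Proposition~\ref{prop:noslant}, to rule out arbitrarily many consecutive lifts in such a tracking configuration; the loss is absorbed in the constant $c$.

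\textbf{Main obstacle.} The delicate point is the lower bound $\ell_i \geq c/\vartheta$: disjointness of two consecutive lifts is by itself not enough to force a large gap (two nested lifts with nearly equal small angles can be crossed arbitrarily close together along $\tilde{\gamma}$). Extracting the factor $1/\vartheta$ therefore requires combining local hyperbolic trigonometry with the global $\rho(\Gamma)$-equivariance and the compactness of the convex core, so that the extremal tracking configurations cannot persist over arbitrarily many consecutive crossings.
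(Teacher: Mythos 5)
Your Step 1 is where the argument loses the proposition. Replacing each $W_x(p)$ by the uniform cap $1/\sin\theta_0$ discards the very correlation that makes the statement true: when $\underline{\smash{\gamma}}$ crosses an arc of $|x|$ at a small angle, the strip width \emph{at that crossing point} is forced to be small. The paper's proof exploits the explicit form $W_x(p)=w_\alpha\cosh d(p,p_\alpha)$: a small crossing angle means the lifted arc $\ell_i$ shadows the geodesic (or the next lift $\ell_{i+6N}$) within $\varepsilon$ for roughly $|\log\vartheta|-\kappa$ units on either side of $p_i$, and Proposition~\ref{prop:noslant} forces $\ell_i$ to remain inside the preimage $\Omega$ of the convex core on that window; since $W_x$ is capped by $1/\sin\theta_0$ throughout $\Omega$ and grows like $\cosh$ along $\ell_i$, one gets the pointwise bound $\tilde W_x(p_i)\leq K'\,\vartheta\,d(p_i,p_{i+6N})$, which is then summed over one period. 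The factor of $\vartheta$ thus lives inside the individual summands, not in a count of how many there are. By collapsing $W_x(p)$ to a constant before summing, you have committed to proving the counting estimate $N\leq C\,\vartheta\,\lambda_\gamma(\rho)$ with $C$ depending only on $\rho$ and the representatives, and this is a different and in fact dubious statement: the paper's pointwise bound permits $W_x(p_i)$ to be arbitrarily small (e.g.\ crossings near the waist of a very narrow strip), in which case consecutive crossings can be spaced $\approx W_x(p_i)/(K'\vartheta)\ll 1/\vartheta$ apart, so the spacing lower bound $\ell_i\geq c/\vartheta$ that your Step 2 needs simply does not hold with a constant $c$ independent of $x$.

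You have honestly flagged the ``nested tracking'' regime as the danger, and it is indeed fatal rather than a technicality to be absorbed in a constant: tracking at small angle is compatible with many close-together crossings precisely because the widths there are small, so appealing to compactness and equivariance of $\rho(\Gamma)$ cannot produce a uniform $c/\vartheta$ gap without re-introducing the width function you have already discarded. To repair the argument you would need to keep $W_x(p_i)$ as a live quantity, derive the shadowing window of length $|\log\vartheta|-\kappa$ from \eqref{eqn:right-angled-triangle}, invoke Proposition~\ref{prop:noslant} to keep $\ell_i$ inside $\Omega$ on that window, and then use the $\cosh$ growth of $W_x$ together with the cap \eqref{eqn:boundwidth} to conclude $W_x(p_i)\lesssim\vartheta$ (with the spacing term $d(p_i,p_{i+6N})$ handling the case where the shadowing window is short) — which is exactly the paper's argument, not a variant of yours.
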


\begin{proof}
Fix $\gamma\in\Gamma\smallsetminus\{ e\}$.
Any lift $\tilde{\gamma}$ of $\underline{\smash{\gamma}}$ to~$\HH^2$ intersects the preimage of $|x|$ in a collection of points $\{p_i\}_{i\in\ZZ}$, naturally ordered along~$\tilde{\gamma}$, so that $\rho(\gamma)$ takes $p_i$ to $p_{i+m}$ for all $i\in\ZZ$. 
Let $\ell_i \subset \HH^2$ be the lifted geodesic arc of $|x|$ that contains~$p_i$, and let $\tilde{W}_x$ be the lift of the function $W_x$ to~$\HH^2$.
Recall that the support $|x|$ consists of at most $3N$ geodesic arcs, where $N:=|\chi|$ is the absolute value of the Euler characteristic of~$S$.
It is enough to find a constant $K'\geq 0$, independent of~$\gamma$, such that for any $i\in\ZZ$,
\begin{equation}\label{eqn:bound-Rx-K'}
\tilde{W}_x(p_i) \leq K' \, \vartheta_{|x|}(\gamma) \, d(p_i, p_{i+6N}) ;
\end{equation}
indeed, the result will follow (with $K=6NK'$) by adding up for $1\leq i \leq m$.

Let $\Omega\subset\HH^2$ be the preimage of the convex core of~$S$.
We first observe the existence of a constant $D>0$, independent of $\gamma$ and~$i$, such that if $\ell_i$ and $\ell_{i+6N}$ exit the same boundary component $\tilde{\eta}$ of~$\Omega$, then $d(p_i,p_{i+6N})\geq\nolinebreak D$.
Indeed, $|x|$ has at most $6N$ half-arcs exiting any boundary component of~$\Omega$; therefore, if $\ell_i$ and $\ell_{i+6N}$ both exit~$\tilde{\eta}$, then $\ell_j=\rho(\gamma')\cdot\ell_i$ for some integer $i<\nolinebreak j\leq i+6N$ and some element $\gamma'\in\Gamma$ stabilizing~$\tilde{\eta}$. 
By Proposition~\ref{prop:noslant}, this implies that the shortest distance from $\ell_i$ to~$\ell_j$ is bounded from below, independently of $\gamma,i,j$; therefore, so is $d(p_i, p_{i+6N})$. 

Let us prove the existence of $K'\geq 0$ such that \eqref{eqn:bound-Rx-K'} holds for all $i\in\ZZ$ with $d(p_i, p_{i+6N})\geq D$.
For any $\varepsilon>0$, there exists $\kappa>0$ (independent of $\gamma$ and~$i$) such that $\ell_i$ remains $\varepsilon$-close to $\tilde{\gamma}$ for at least
$$|\log (\vartheta_{|x|}(\gamma))| - \kappa$$
units of length to the left and right of~$p_i$: this follows from the fact that if two geodesics $\ell,\ell'$ of~$\HH^2$ intersect at an angle $\vartheta\in (0,\pi/2]$ at a point~$p$, then
\begin{equation}\label{eqn:right-angled-triangle}
e^{d(q,p)} \geq \sinh d(q,p) = \frac{\sinh d(q,\ell')}{\sin\vartheta} \geq \frac{\sinh d(q,\ell')}{\vartheta}
\end{equation}
for all $q\in\ell$.
If $\varepsilon$ is small enough (independently of $\gamma$ and~$i$), then in particular~$\ell_i$ cannot exit~$\Omega$ on this interval around $p_i$: otherwise $\tilde{\gamma}$ would exit it as well, by Proposition~\ref{prop:noslant}.
But on this interval, the maximum of $\tilde{W}_x|_{\ell_i}$ is at least $\tilde{W}_x(p_i)\,e^{|\log (\vartheta_{|x|}(\gamma))|-\kappa}/2$, by definition of~$W_x$. 
Using \eqref{eqn:boundwidth}, it follows that
$$\tilde{W}_x(p_i) \leq \vartheta_{|x|}(\gamma) \frac{2e^{\kappa}}{\sin \theta_0},$$
and so \eqref{eqn:bound-Rx-K'} holds with $K'=2e^{\kappa}/(D\sin\theta_0)$ when $d(p_i, p_{i+6N})\geq D$.

We now treat the case of integers $i\in\ZZ$ such that $d(p_i, p_{i+6N})<D$.
Using \eqref{eqn:right-angled-triangle}, we see that the distance from $p_i$ to the line~$\ell_{i+6N}$ is at most
$$\delta_{\gamma,i} := 2\vartheta_{|x|}(\gamma)\,d(p_i, p_{i+6N}) \in (0,\pi D) .$$
For any $\varepsilon>0$ there exists $\kappa>0$ (independent of $\gamma$ and~$i$) such that $\ell_i$ remains $\varepsilon$-close to $\ell_{i+6N}$ for at least
$$|\log \delta_{\gamma,i}| - \kappa$$
units of length to the right and left of~$p_i$: this follows from the fact that if $\ell,\ell'$ are two disjoint geodesics of~$\HH^2$ and if $p\in\ell$ is closest to~$\ell'$, then
$$\sinh d(q,\ell') = \cosh d(q,p) \cdot \sinh d(\ell,\ell')$$
for all $q\in\ell$, and $\delta\mapsto(\sinh\delta)/\delta$ is bounded on $(0,\pi D)$.
If $\varepsilon$ is small enough (independently of $\gamma$ and~$i$), then in particular~$\ell_i$ cannot exit~$\Omega$ on this interval around $p_i$: otherwise $\ell_{i+6N}$ would exit the same boundary component of~$\Omega$, by Proposition~\ref{prop:noslant}, which would contradict the definition of~$D$.
But on this interval, the maximum of $\tilde{W}_x|_{\ell_i}$ is at least $\tilde{W}_x(p_i)\,e^{|\log \delta_{\gamma,i}|-\kappa}/2$, by definition of~$W_x$. 
Using \eqref{eqn:boundwidth}, it follows that
$$\tilde{W}_x(p_i) \leq \delta_{\gamma,i} \, \frac{2e^{\kappa}}{\sin \theta_0},$$
and so \eqref{eqn:bound-Rx-K'} holds with $K'=4e^{\kappa}/\sin\theta_0$ when $d(p_i, p_{i+6N})<D$.
\end{proof}

\subsection{Boundedness of the map~$\boldsymbol f$}\label{subsec:f-bounded}

Observation~\ref{obs:length-deriv} and Proposition~\ref{prop:technical} imply that in the unit-peripheral normalization \eqref{eqn:normal},
\begin{equation} \label{eqn:biggestangle}
0 \leq \frac{\D\lambda_{\gamma}\big({\boldsymbol f}(x)\big)}{\lambda_{\gamma}(\rho)} \leq K \bigg( \max_{p\in\underline{\smash{\gamma}}\cap |x|} \measuredangle_p(\underline{\smash{\gamma}},|x|)\bigg)^2
\end{equation}
for all $\gamma\in\Gamma\smallsetminus\{ e\}$ and $x\in\overline{X}$, where $\underline{\smash{\gamma}}$ is the geodesic representative of $\gamma$ on~$S$.
The square on the right-hand side can be interpreted by saying that small intersection angles in $\underline{\smash{\gamma}}\cap |x|$ weaken the effect of the strip deformation ${\boldsymbol f}(x)$ on the length of~$\gamma$ in two different ways: first, by spreading out the intersection points along~$\underline{\smash{\gamma}}$ (Proposition~\ref{prop:technical}); second, by lessening the effect of each (Observation~\ref{obs:length-deriv}).

Recall that in the unit-peripheral normalization, the set ${\boldsymbol f}(\overline{X})$ is contained in some affine hyperplane of $T_{[\rho]}\T=H^1_{\rho}(\Gamma,\g)$ that does not contain~$0$.

\begin{proposition} \label{prop:unit-peripheral}
In the unit-peripheral normalization, the set ${\boldsymbol f}(\overline{X})$ is bounded in $H^1_{\rho}(\Gamma,\g)$; its projectivization $f(\overline{X})$ is contained in an affine chart of $\PP(H^1_{\rho}(\Gamma,\g))$.
\end{proposition}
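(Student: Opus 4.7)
The plan is to combine the uniform estimate \eqref{eqn:biggestangle} with a dimension-count argument based on hyperbolic length functions. First, observe that the intersection angle $\measuredangle_p(\underline{\smash{\gamma}},|x|)$ always lies in $[0,\pi/2]$, so \eqref{eqn:biggestangle} yields, for every $\gamma\in\Gamma\smallsetminus\{e\}$ and every $x\in\overline{X}$,
$$0 \,\leq\, \D\lambda_{\gamma}\big({\boldsymbol f}(x)\big) \,\leq\, K\,\lambda_{\gamma}(\rho)\,\Big(\frac{\pi}{2}\Big)^{2}.$$
Thus, for each fixed $\gamma$, the real-valued function $x\mapsto \D\lambda_{\gamma}({\boldsymbol f}(x))$ is uniformly bounded on~$\overline{X}$, with a bound depending only on~$\gamma$ and on the data $(\underline{\alpha},p_{\alpha},m_{\alpha})_{\alpha\in\mathscr{A}}$.

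Next I would use the fact that hyperbolic length functions detect infinitesimal deformations: by standard Fricke--Teichm\"uller theory (the marked length spectrum gives a real analytic embedding of~$\T$ into $\RR^{\Gamma}$, and $\T$ is a smooth submanifold of $\Hom(\Gamma,G)/G$), one can choose finitely many nontrivial elements $\gamma_1,\dots,\gamma_d\in\Gamma$, with $d = \dim H^1_{\rho}(\Gamma,\g)$, such that the linear functionals $\D\lambda_{\gamma_1},\dots,\D\lambda_{\gamma_d}$ form a basis of the dual space $(H^1_{\rho}(\Gamma,\g))^{\ast}$. In the corresponding linear coordinates on $H^1_{\rho}(\Gamma,\g)$, each coordinate of ${\boldsymbol f}(x)$ is uniformly bounded over $x\in\overline{X}$ by the previous step. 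This proves the first assertion: ${\boldsymbol f}(\overline{X})$ is bounded in $H^1_{\rho}(\Gamma,\g)$.

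For the second assertion I would invoke the observation made just before Definition~\ref{def:unit-peripheral-norm}: in the unit-peripheral normalization, the image ${\boldsymbol f}(\overline{X})$ is contained in the affine hyperplane
$$H \,=\, \Big\{v\in H^1_{\rho}(\Gamma,\g) \,:\, \sum_{i=1}^{m} \D\lambda_{\gamma_i}(v) = 1\Big\},$$
where $\gamma_1,\dots,\gamma_m\in\Gamma$ represent the connected components of~$\partial S$. Since $H$ does not contain the origin, the projectivization $f(\overline{X})$ is contained in the affine chart of $\PP(H^1_{\rho}(\Gamma,\g))$ defined as the complement of the projective hyperplane $\{[v] : \sum_i \D\lambda_{\gamma_i}(v) = 0\}$; indeed, the natural identification $v\mapsto [v]$ realizes $H$ itself as this affine chart, so $f(\overline{X})$ corresponds, under this identification, to the bounded set ${\boldsymbol f}(\overline{X})$.

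The main obstacle is justifying that the differentials of length functions span $T^{\ast}_{[\rho]}\T$. Although well known, this crucially relies on the convex cocompactness of~$\rho$ (so that the marked length spectrum genuinely records $[\rho]$ up to finite ambiguity) and on the finite generation of~$\Gamma$. A more self-contained argument would select specific elements (for instance related to a Schottky system of generators) and verify the spanning property by computing derivatives of trace functions in $\PSL_2(\RR)$; but the cleanest route is to cite the real analytic embedding of~$\T$ by its marked length spectrum and deduce that finitely many length differentials already span.
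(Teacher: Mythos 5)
Your proof is correct and follows essentially the same route as the paper: choose finitely many group elements whose length differentials $\D\lambda_{\gamma_i}$ form a dual basis of $T_{[\rho]}\T\simeq H^1_{\rho}(\Gamma,\g)$, apply~\eqref{eqn:biggestangle} to bound each coordinate of ${\boldsymbol f}(x)$ uniformly over $x\in\overline{X}$, and invoke the remark preceding Definition~\ref{def:unit-peripheral-norm} that ${\boldsymbol f}(\overline{X})$ lies in the affine hyperplane $\sum_i\D\lambda_{\gamma_i}=1$, which projects into an affine chart. The paper compresses this into two sentences and, like you, treats the existence of the dual basis of length differentials as a standard fact of Fricke--Teichm\"uller theory.
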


\begin{proof}
There is a family $\{\gamma_1, \dots, \gamma_{3N}\} \subset\Gamma\smallsetminus\{ e\}$ such that the $\D\lambda_{\gamma_i}$ form a dual basis of $T_{[\rho]}\T$.
We then apply \eqref{eqn:biggestangle} to the~$\gamma_i$.
\end{proof}

The set $f(\overline{X})\subset\PP(H^1_{\rho}(\Gamma,\g))$ does not depend on the choice of normalization of widths $m_{\alpha}$, by Remark~\ref{rem:change-m-alpha}.

\subsection{Lengthening the boundary of~$S$}

To conclude this section, we show that a strip deformation with large weight (\ie belonging to $F(tX)$ for some large $t>0$) lengthens the boundary of~$S$ by a large amount, regardless of the supporting arcs.
This will be needed in Section~\ref{subsec:mainsteps123} in the proof that the map $F$ is proper (Proposition~\ref{prop:mainsteps}.(3)).

\begin{lemma}\label{lem:increase-bdry-length}
In the unit-peripheral normalization, there exists $K'\geq 0$ (depending on~$\rho$ and on the choice of geodesic representatives $(\underline{\alpha})_{\alpha\in\mathscr{A}}$) such that for any $t>0$ and any $x\in X$, the total boundary length of the convex core for $F(tx)\in\T$ is at least $2\log t-K'$.
\end{lemma}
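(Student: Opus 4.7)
The plan is to pick out a single ``heavy'' crossing of an arc in $|x|$ with the convex core boundary, reduce via monotonicity to a single-arc strip deformation at that crossing, and bound the translation length of the corresponding boundary loop by an explicit trace computation in $\PSL_2(\RR)$. First, by pigeonhole applied to the unit-peripheral normalization $\sum_{p\in|x|\cap\partial S}W_x(p)\sin\measuredangle_p=1$ (which has at most $6|\chi|$ terms, since each of
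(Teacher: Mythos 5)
Your outline---pigeonhole on the unit-peripheral normalization to find a heavy boundary crossing, reduce by monotonicity to a single strip insertion there, and bound the resulting boundary translation length by an explicit $\PSL_2(\RR)$ trace computation---is precisely the route the paper's own proof takes. However, the proposal is truncated before the pigeonhole sentence even finishes, so none of the execution can be checked. For the record, the two nontrivial ingredients that you will still need are: (i) a uniform lower bound $b\geq b_0>0$ on the relevant off-diagonal matrix entry of the boundary holonomy after normalizing the arc to $(0,\infty)$ in the upper half-plane, which comes from Proposition~\ref{prop:noslant} (the uniform lower bound on intersection angles with $\partial S$) and without which the trace estimate $\mathrm{tr}(\rho(\gamma)g)\gtrsim t\,b\,W_x(p)$ degenerates; and (ii) passing to the orientable double cover so that $\rho$ lifts to $\PSL_2(\RR)$ and the trace computation makes sense. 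As stated, the attempt is a correct plan, not a proof.
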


\noindent As can be seen in the proof by considering one very long arc, $\log t$ is actually the optimal order of magnitude.

\begin{proof}
Up to passing to a double covering, we may assume that $S$ is orientable and $\rho\in\Hom(\Gamma,G)$ takes values in $G_0=\PSL_2(\RR)$.
By definition \eqref{eqn:normal} of the unit-peripheral normalization, we can find a point $p\in |x|\cap\partial S$ such that
$$W_x(p) \geq \frac{1}{6N},$$
where $3N=\dim(\T)$ as before.
Let $\underline{\alpha}$ be the arc of $|x|$ through~$p$.
In the upper half-plane model of~$\HH^2$, we may assume that $p$ lifts to $\sqrt{-1}$ and $\underline{\alpha}$ to $(0,\infty)$. 
Taking the basepoint of $\pi_1(S)$ at~$p$, the holonomy of the boundary loop $\gamma$ through~$p$ (suitably oriented) then has the form 
$$\rho(\gamma) = \begin{pmatrix} a & b \\ b & c \end{pmatrix}$$
with $a,b,c>0$ and $ad-bc=1$, which we see as an element of $G_0=\PSL_2(\RR)$.
By Proposition~\ref{prop:noslant}, we can bound $b$ below by some $b_0>0$ independent of $p$ and~$\alpha$.
Let $w$ be the width (measured at the waist as usual) of the strip inserted along~$\underline{\alpha}$ when producing the deformation $F(1x)$ from~$S$: by definition, $W_x(p)=w \cosh(d(p,p_\alpha))$, where the distance is measured along~$\underline{\alpha}$.
If only one end of~$\underline{\alpha}$ exits through the boundary loop~$\gamma$, then inserting a strip of width $tw$, as in the deformation $F(tx)$, corresponds to multiplying $\rho(\gamma)$ by
$$g := \begin{pmatrix} e^{d(p,p_{\alpha})/2} & 0 \\ 0 & e^{-d(p,p_{\alpha})/2} \end{pmatrix} \begin{pmatrix} \cosh \frac{tw}{2} & \sinh \frac{tw}{2} \\ \sinh \frac{tw}{2} & \cosh \frac{tw}{2} \end{pmatrix} \begin{pmatrix} e^{-d(p,p_{\alpha})/2} & 0 \\ 0 & e^{d(p,p_{\alpha})/2} \end{pmatrix} ,$$
which increases $\lambda(\rho(\gamma))$.
If the other end of~$\underline{\alpha}$, or some other arcs in the support of~$|x|$, also exit through~$\gamma$, then inserting strips to produce $F(tx)$ increases $\lambda(\rho(\gamma))$ even more, so we may use $\lambda(\rho(\gamma)g)$ as a lower bound for the length of the convex core corresponding to $F(tx)$.
A computation shows
\begin{eqnarray*}
\mathrm{tr}(\rho(\gamma)g) & \geq & b \, \big(e^{d(p,p_{\alpha})} + e^{-d(p,p_{\alpha})}\big) \, \sinh \frac{tw}{2}\\
& \geq & t\, b\, w \cosh(d(p,p_\alpha)) = t \, b \, W_x(p) \ \geq\ t \, \frac{b_0}{6N} \,,
\end{eqnarray*}
and so
$$\lambda_{\gamma}(\F(tx)) \geq 2 \, \mathrm{arccosh} \bigg(\frac{\mathrm{tr}(\rho(\gamma)g)}{2}\bigg) \geq 2\,\log t - K' ,$$
where $K':=2|\log(12N/b_0)|\geq 0$ does not depend on $t$ nor~$\alpha$.
\end{proof}

\section{Reduction of the proof of Theorems \ref{thm:main} and~\ref{thm:main-macro}}\label{sec:reduction}

We continue with the notation of Section~\ref{sec:metric-estim}.
As in Section~\ref{subsec:arc-complex}, let $X$ be the subset of the arc complex~$\overline{X}$ obtained by removing all faces corresponding to collections of arcs that do \emph{not} subdivide the surface $S$ into disks.
By Penner's work \cite{pen87} on the decorated Teichm\"uller space, $X$ is an open ball of dimension $3|\chi|-1$, where $\chi\in -\NN$ is the Euler characteristic of~$S$.
In order to prove Theorems \ref{thm:main} and~\ref{thm:main-macro}, it is sufficient to prove the following proposition.

\begin{proposition}\label{prop:mainsteps}
For any convex cocompact $\rho\in\Hom(\Gamma,G)$,
\begin{enumerate}
  \item the restriction of $f$ to~$X$ (\resp of $\F$ to $\mathrm{C}X$) takes values in the projectivized admissible cone $\adm(\rho)$ (\resp in $\Adm(\rho)$),
  \item $\adm(\rho)$ is an open ball of dimension $3|\chi|-1$, and $\Adm(\rho)$ is open in~$\T$ and homotopically trivial,
  \item the restrictions $f : X\rightarrow\adm(\rho)$ and $\F : \mathrm{C}X\rightarrow\Adm(\rho)$ are proper,
  \item the restrictions $f : X\rightarrow\adm(\rho)$ and $\F : \mathrm{C}X\rightarrow\Adm(\rho)$ are local homeomorphims.
\end{enumerate}
\end{proposition}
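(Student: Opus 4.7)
The plan is to prove each of the four parts of the proposition, then combine them. Once all four are established, the restriction $f : X \to \adm(\rho)$ is a proper local homeomorphism from a connected open ball (by Penner \cite{pen87}) to an open ball (by (2)). Proper local homeomorphisms between manifolds are covering maps, and a connected cover of an open ball is a homeomorphism, giving Theorem~\ref{thm:main}. The same reasoning, using contractibility of $\mathrm{C}X$ and homotopic triviality of $\Adm(\rho)$, yields Theorem~\ref{thm:main-macro} for $\F$. I would treat the four parts in the order stated.

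Part (1): Observation~\ref{obs:length-deriv} already yields $\D\lambda_\gamma({\boldsymbol f}(x))\geq 0$. When $x\in X$, the support $|x|$ decomposes $S$ into disks, so every closed geodesic $\underline{\smash{\gamma}}$ crosses $|x|$; the Thurston--Papadopoulos--Th\'eret argument then upgrades this to the strict uniform inequality~\eqref{eqn:propcritMink}. For the macroscopic version $\F(\mathrm{C}X)\subset\Adm(\rho)$, the cocycle formula~\eqref{eqn:F(tx)(gamma)} together with the same disk-decomposition argument yields $\lambda_\gamma(\F(tx))/\lambda_\gamma(\rho)$ uniformly bounded below by a number $>1$, giving~\eqref{eqn:propcritAdS}. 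Part (2) follows from already-available structural results: the positive admissible cone in $H^1_\rho(\Gamma,\g)$ is an open, convex, full-dimensional sub-cone by the Goldman--Labourie--Margulis criterion, so its projectivization $\adm(\rho)$ is an open ball of dimension $3|\chi|-1$. Openness of $\Adm(\rho)\subset\T$ follows from continuity of the marked length spectrum and openness of~\eqref{eqn:propcritAdS}; homotopic triviality can be obtained independently by a radial retraction along a flow on $\T$ preserving~\eqref{eqn:propcritAdS}.

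Part (3), properness, combines Proposition~\ref{prop:unit-peripheral} with Lemma~\ref{lem:increase-bdry-length}. Fix the unit-peripheral normalization. If $(x_n)\subset X$ satisfies $f(x_n)\in K$ for a fixed compact $K\subset\adm(\rho)$, the boundedness of ${\boldsymbol f}(\overline{X})$ in $H^1_\rho(\Gamma,\g)$ together with compactness of $\overline{X}$ let us extract a subsequence with $x_n\to x_\infty\in\overline{X}$. If $x_\infty\notin X$, then $|x_\infty|$ fails to cut $S$ into disks and some simple closed geodesic $\underline{\smash{\gamma}}$ is disjoint from $|x_\infty|$; by continuity $\D\lambda_\gamma({\boldsymbol f}(x_n))/\lambda_\gamma(\rho)\to 0$, contradicting $f(x_n)\in K\subset\adm(\rho)$. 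Hence $x_\infty\in X$, proving properness of $f$. For $\F : \mathrm{C}X\to\Adm(\rho)$, the same argument handles motion within $X$, while Lemma~\ref{lem:increase-bdry-length} handles $t\to\infty$: the total boundary length of the convex core is a proper function on~$\T$, so $\F(tx_n)$ cannot remain in a compact subset of $\Adm(\rho)$.

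Part (4) is the main obstacle and is exactly the content of the forthcoming Claim~\ref{claim:main}. The map ${\boldsymbol f}$ is cellwise affine in barycentric coordinates (a fixed linear combination of the vectors $\{{\boldsymbol f}(\alpha)\}$), so $f$ is cellwise projective; a routine stratified argument reduces the local homeomorphism property on all of $X$ to two statements about cells of codimension zero and one. On a top-dimensional cell of $X$, corresponding to a hyperideal triangulation $\Delta$ with $3|\chi|$ arcs, one must show that the vectors $\{{\boldsymbol f}(\alpha)\}_{\alpha\in\Delta}$ are linearly independent in $H^1_\rho(\Gamma,\g)$, making $f$ a local diffeomorphism on the open cell. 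Across a codimension-one face, where two top cells differ by a diagonal flip in some hyperideal quadrilateral, one must verify a non-folding condition: the two adjacent cells must map to opposite sides of the image of the common face. I expect the harder of these to be the codimension-one statement, as it amounts to a precise sign computation governing how the cohomology class of an infinitesimal strip deformation pivots under a flip; this is where the formalism of Section~\ref{sec:formalism} and the explicit calculation of Section~\ref{sec:codim01} will be decisive.
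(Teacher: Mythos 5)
Your overall strategy (establish (1)--(4), conclude $f$ and $\F$ are proper local homeomorphisms between cells, hence trivial coverings, hence homeomorphisms) is exactly the paper's. Parts (1) and (2) are broadly in line with the paper, though for homotopic triviality of $\Adm(\rho)$ the paper does not use a ``radial retraction along a flow preserving \eqref{eqn:propcritAdS}'' (which you leave unspecified and for which there is no obvious candidate); instead it pushes any sphere of metrics by strip deformations of width~$t$ along a fixed hyperideal triangulation, observes that for large~$t$ the surfaces become long isthmi, and then interpolates linearly in isthm-thickness coordinates. That is a genuinely different and more concrete mechanism.

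The main gap is in your part~(3). You write that ``compactness of $\overline{X}$'' lets you extract a subsequence with $x_n\to x_\infty\in\overline{X}$. But the arc complex $\overline{X}$ is \emph{not} compact for a general convex cocompact surface: it is finite only for the thrice-holed sphere and the twice-holed projective plane, and is already infinite (the Farey triangulation) for the once-holed torus and the once-holed Klein bottle. When the supports $|x_n|$ run through infinitely many distinct cells, there is no limit point $x_\infty\in\overline{X}$ and your argument breaks. This is precisely the hard case: the paper splits the proof into two cases --- supports stabilize (where your continuity argument applies) versus supports diverge. In the diverging case the paper passes to a Hausdorff limit lamination $\Lambda$ of the supports, finds simple closed geodesics meeting $\Lambda$ at arbitrarily small angles, and then invokes the quantitative estimate~\eqref{eqn:biggestangle} (which rests on Proposition~\ref{prop:technical} in the unit-peripheral normalization and Proposition~\ref{prop:unit-peripheral}) to show the limiting cohomology class fails~\eqref{eqn:propcritMink}. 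This two-sided angle estimate --- small angles both spread out the crossings along $\underline{\smash{\gamma}}$ and weaken the contribution at each crossing --- is the technical heart of properness and is absent from your sketch. A parallel subtlety occurs in the macroscopic case, where in addition to Lemma~\ref{lem:increase-bdry-length} for $t_n\to\infty$, the diverging-support case again requires Proposition~\ref{prop:technical} to bound the length of $\gamma$ in $\F(t_nx_n)$.

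On part (4), you correctly identify the reduction to Claim~\ref{claim:main}, but you understate the difficulty of the ``routine stratified argument.'' Linear independence (Claim~\ref{claim:main}.(0)) and the codimension-one non-folding condition (Claim~\ref{claim:main}.(1)) do not by themselves close the induction: around a codimension-two stratum corresponding to a pentagon move, the cellwise projective link map is a self-map of a circle of degree $1$ or $2$, and ruling out degree~$2$ requires the separate convexity statement Claim~\ref{claim:main}.(2), together with the connectedness of the space of choices (Remark~\ref{rem:connected-choices}) to propagate the degree computation. Only after the codimension-two case is settled does the induction on codimension via simply connected link spheres go through. So the reduction to codimensions $0$ and $1$ is real, but Claim~\ref{claim:main} must include part~(2), which you omit.
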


Indeed, (3) and~(4) imply that the restrictions $f : X\rightarrow\adm(\rho)$ and $\F : \mathrm{C}X\rightarrow\Adm(\rho)$ are coverings, and (2) implies that these coverings are trivial.
We will prove (1), (2), (3) in Section~\ref{subsec:mainsteps123}, while in Section~\ref{subsec:reduc-mainstep4} we will reduce (4) to a basic claim (Claim~\ref{claim:main}) about the behavior of the map ${\boldsymbol f}$ at faces of codimension $0$ and~$1$ in the arc complex, to be proved in Section~\ref{sec:codim01}.

\subsection{Range and properness of $f$ and~$F$}\label{subsec:mainsteps123}

In this section we prove statements (1), (2), and~(3) of Proposition~\ref{prop:mainsteps}.

\begin{proof}[Proof of Proposition~\ref{prop:mainsteps}.(1)] (See also \cite{pt10}.)
The inclusion $f(X)\subset\adm(\rho)$ means that any infinitesimal strip deformation ${\boldsymbol f}(x)$ of~$\rho$, performed on pairwise disjoint geodesic arcs $\underline{\alpha}_0,\dots,\underline{\alpha}_k$ that subdivide $S=\rho(\Gamma)\backslash\HH^2$ into topological disks, lengthens every curve at a \emph{uniform} rate relative to its length.
(Observation~\ref{obs:length-deriv} gives lengthening, but not uniform lengthening a priori.)
To see that this is true, note that, by compactness, there exist $R>0$ and $\theta\in (0,\pi/2]$ such that any closed geodesic $\underline{\smash{\gamma}}$ of~$S$ must cross one of the $\underline{\alpha}_i$ at least once every $R$ units of length, at an angle $\geq\theta$ (because $\underline{\smash{\gamma}}$ cannot exit the convex core).
If we denote by $\gamma$ the element of~$\Gamma$ corresponding to~$\underline{\smash{\gamma}}$, then Observation~\ref{obs:length-deriv} implies
\begin{equation}\label{eqn:lower-bound-deriv-length}
\frac{\D\lambda_{\gamma}\big({\boldsymbol f}(x)\big)}{\lambda_{\gamma}(\rho)} \geq \frac{\#\left(\underline{\smash{\gamma}}\cap\bigcup_{1\leq i\leq k} \underline{\alpha}_i\right)}{\lambda_{\gamma}(\rho)} \, w \,\sin\theta \geq \frac{w \, \sin\theta}{R} =: \varepsilon > 0,
\end{equation}
where $w>0$ is defined to be the minimum, for $1\leq i\leq k$, of the strip widths $w_{\alpha_i}$ associated with~$x$.
This proves that any infinitesimal strip deformation ${\boldsymbol f}(x)$, for $x\in X$, satisfies \eqref{eqn:propcritMink}, hence $f(X)\subset\adm(\rho)$.

To see that $\F(\mathrm{C}X)\subset\Adm(\rho)$, it is sufficient to establish that $\F(X)\subset\Adm(\rho)$ (where we identify $X$ with $1X\subset \mathrm{C}X$), for we may adjust the widths $w_{\alpha}$ as we wish.
Note that the bounds $\theta$ and~$R$ above can be taken to hold uniformly when $\rho$ and the geodesic representatives $\underline{\alpha}_i$ vary in a compact subset of the deformation space.
We can thus argue by integration of the infinitesimal inequality \eqref{eqn:lower-bound-deriv-length}, using \eqref{eqn:f-deriv-F}.
More precisely, for any $t_0\geq 0$ and $x\in\nolinebreak X$, the vector $\left . \frac{\D}{\D t} \right |_{t=t_0} \F(tx) \in T_{\F(t_0x)}\T$ is realized as an infinitesimal strip deformation of $\F(t_0x)$, for instance along geodesic arcs that bound the strips used to produce $\F(t_0 x)$ from~$\rho$.
Bounds $\theta$ and~$R$ as above hold for these geodesic arcs, independently of $t_0$ and~$x$, as long as $t_0\leq 1$.
Therefore, \eqref{eqn:lower-bound-deriv-length} implies
$$\frac{\D}{\D t}\Big|_{t=t_0}\ \frac{\lambda_{\gamma}(F(tx))}{\lambda_{\gamma}(F(t_0x))} \geq \varepsilon$$
for all $t_0\in [0,1]$, all $\gamma\in\Gamma\smallsetminus\{ e\}$, and all $x\in X$, and so $\lambda_{\gamma}(F(x))\geq e^{\varepsilon}\lambda_{\gamma}(\rho)$ for all $\gamma\in\Gamma$ and all $x\in X$, proving $F(X)\in\Adm(\rho)$.
\end{proof}

\begin{proof}[Proof of Proposition~\ref{prop:mainsteps}.(2)]
To see that $\Adm(\rho)$ is open in the Fricke--Teich\-m\"uller space~$\T$, we note that for any $[j]\in\T$ and $\varepsilon>0$, there exists a neighborhood $B_{\varepsilon}$ of $[j]$ in~$\T$ such that the convex cores of all hyperbolic metrics on~$S$ with holonomies in~$B_{\varepsilon}$ are mutually $(1+\varepsilon)$-bi-Lipschitz.
Thus
$$\frac{1}{1+\varepsilon} \, \leq \, \frac{\lambda_\gamma(j')}{\lambda_\gamma(j)} \, \leq \, 1+\varepsilon$$
for all $\gamma\in \Gamma\smallsetminus \{e\}$ and $[j']\in B_{\varepsilon}$.
In particular, if $j\in\Hom(\Gamma,G)$ satisfies \eqref{eqn:propcritAdS}, then for $\varepsilon$ small enough, any $j'\in\Hom(\Gamma,G)$ with $[j']\in B_{\varepsilon}$ also satisfies \eqref{eqn:propcritAdS}, and so $B_{\varepsilon}\subset\Adm(\rho)$, proving the openness of $\Adm(\rho)$.

The fact that $\adm(\rho)$ is open in $\PP(T_{[\rho]}\T)\simeq\PP(H^1_{\rho}(\Gamma,\g))$ follows from \cite{glm09} or \cite{dgk13}.
Alternatively, we may argue as above: by \eqref{eqn:propcritMink} and linearity of the maps $\mathrm{d}\lambda_\gamma$ for $\gamma\in\Gamma$, it is enough to check that the neighborhood $B_{\varepsilon}$ of $[j]$, corresponding to mutually $(1+\varepsilon)$-bi-Lipschitz convex cores, can be taken to contain a ball of radius $\geq C\varepsilon$ as $\varepsilon \rightarrow 0$, for some $C>0$ independent of~$\varepsilon$ and some smooth Riemannian metric on a neighborhood $U$ of $[\rho]$ in~$\T$.
This in turn follows from the existence of a \emph{smooth} local trivialization of the natural bundle of hyperbolic surfaces above~$U$, and compactness of the convex core.

By construction (see \eqref{eqn:propcritMink}), the positive admissible cone of~$\rho$ is also a \emph{convex} subset of $H^1_{\rho}(\Gamma,\g)$, hence its projectivization $\adm(\rho)$ is an open ball of dimension $3|\chi|-1$.

Finally, we check that $\Adm(\rho)$ is homotopically trivial.
Fix $k\in\NN$ and consider a continuous map $\sigma$ from the sphere $\SS^k$ to $\Adm(\rho)$.
We want to deform $\sigma$ to a constant map, inside the set of continuous maps from $\SS^k$ to $\Adm(\rho)$.
Choose a hyperideal triangulation $\Delta$ of~$S$.
For any $t\geq 0$, let\linebreak $\sigma_t : \SS^k\rightarrow\T$ be the postcomposition of~$\sigma$ with a strip deformation of width~$t$ along all arcs of~$\Delta$ simultaneously (taking for instance geodesic representatives that exit the boundary of the convex core of $S=\rho(\Gamma)\backslash\HH^2$ perpendicularly).
Then $\sigma_t(\SS^k) \subset \Adm(\rho)$ by Proposition~\ref{prop:mainsteps}.(1), and for large $t$ the convex core of any hyperbolic metric corresponding to some $\sigma_t(q)\in\T$, for $q\in\SS^k$, looks in fact like a collection of near-ideal triangles connected by long, thin isthmi of length $t+O(1)$, according to the combinatorics of the dual trivalent graph of~$\Delta$ (see Figure~\ref{fig:isthmi}).
\begin{figure}[ht!]
\centering
\labellist
\small\hair 2pt
\pinlabel $\sigma(q)$ at 60 25
\pinlabel $O(e^{-t/2})$ at 330 92
\pinlabel $t+O(1)$ at 330 36
\pinlabel $\sigma_t(q)$ at 330 10
\endlabellist
\includegraphics[width=11cm]{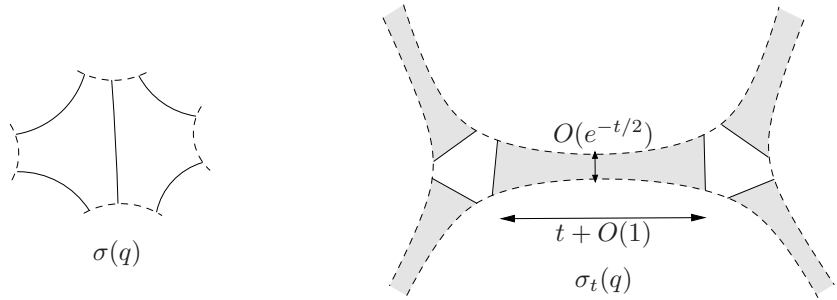}
\caption{The convex core of a convex cocompact hyperbolic surface (here $\sigma(q)$ for $q\in\SS^k$), before and after inserting strips of width~$t$ for large $t>0$. The strips are shaded.}
\label{fig:isthmi}
\end{figure}
Here the error term $O(1)$ is uniform in $q\in\SS^k$.
The thickness of each isthm (\ie the smallest length of a geodesic arc across the convex core in the isotopy class of the strip) is $O(e^{-t/2})$.
These thicknesses form coordinates for the Fricke--Teichm\"uller space $\T$ of~$S$.
There exists $\varepsilon>0$, depending on~$\rho$, such that when these coordinates are all $\leq \varepsilon$, then the metric is in $\Adm(\rho)$.
Choose $t$ large enough to make all isthm thicknesses $\leq\varepsilon$ in all the metrics $\sigma_t(q)$ for $q$ ranging in~$\SS^k$, then interpolate linearly to thicknesses $(\varepsilon, ... ,\varepsilon)$.
This proves that $\sigma$ is homotopically trivial.
\end{proof}

In particular, $\Adm(\rho)$ is connected and simply connected.
Theorem~\ref{thm:main-macro} will imply that it is actually a ball.

\begin{proof}[Proof of Proposition~\ref{prop:mainsteps}.(3)]
To establish the properness of the restriction $f : X \to \adm(\rho)$, we consider a sequence $(x_n)$ escaping to infinity in~$X$, such that $(f(x_n))$ converges to some class $[u]\in\PP(H^1_{\rho}(\Gamma,\g))$; we must show that $[u]$ does not lie in $\adm(\rho)$.
By Remark~\ref{rem:change-m-alpha}, up to replacing the sequence $(x_n)$ with $(h(x_n))$ for some cell-preserving, cellwise projective homeomorphism $h : \overline{X}\rightarrow\overline{X}$, so that $|x_n|=|h(x_n)|$ for all~$n$, we may assume that we are working in the unit-peripheral normalization.

Suppose that, up to passing to a subsequence, the supports $|x_n|$ stabilize; then $(x_n)$ converges to some point $x\in\overline{X}\smallsetminus\nolinebreak X$, up to passing again to a subsequence.
By construction, the restriction of $f$ to any cell of the arc complex~$\overline{X}$ is continuous: in particular, $[u]=f(x)$.
Since $x\notin X$, the~support $|x|$ fails to decompose the surface into disks, hence the infinitesimal deformation ${\boldsymbol f}(x)$ fails to lengthen all curves (Observation~\ref{obs:length-deriv}), and $[u]\notin\nolinebreak\adm(\rho)$.

Otherwise, the supports $|x_n|$ diverge.
Up to passing to a subsequence, we may assume that they admit a Hausdorff limit $\Lambda\subset S$, which consists of a nonempty compact lamination together with some isolated leaves escaping in the funnels.
For any $\varepsilon>0$ we can find a simple closed geodesic $\underline{\smash{\gamma}}$ forming angles $\leq\varepsilon/2$ with $\Lambda$, hence $\leq\varepsilon$ with~$|x_n|$ for large~$n$.
Since the closure of ${\boldsymbol f}(\overline{X})$ in $H^1_{\rho}(\Gamma,\g)$ is compact and does not contain~$0$ (Proposition~\ref{prop:unit-peripheral}), the sequence $({\boldsymbol f}(x_n))$ converges to some infinitesimal deformation $u$ in the projective class $[u]$.
By \eqref{eqn:biggestangle}, the corresponding element $\gamma\in\Gamma$ satisfies
$$\frac{\D\lambda_{\gamma}(u)}{\lambda_{\gamma}(\rho)} \leq \varepsilon^2 K.$$
Since this holds for arbitrarily small~$\varepsilon$, we see that $u$ does not satisfy \eqref{eqn:propcritMink}, \ie $[u]\notin\adm(\rho)$. 
Thus $f$ is proper.

We now show that the restriction $\F : \mathrm{C}X\rightarrow\Adm(\rho)$ is proper.
As in the infinitesimal case, up to applying a cellwise linear homeomorphism $\mathrm{C}X\rightarrow \mathrm{C}X$, we may assume that we are working in the unit-peripheral normalization.
Consider a sequence $(t_nx_n)$ escaping to infinity in $\mathrm{C}X$, with $t_n\in\RR_+^{\ast}$ and $x_n\in\nolinebreak X$ for all~$n$; we must show that $\F(t_nx_n)$ escapes to infinity in $\Adm(\rho)$.
If $t_n\rightarrow +\infty$, then $\F(t_n x_n)$ escapes to infinity in~$\T$, because the length of the boundary of the convex core of the corresponding hyperbolic metric on~$S$ goes to infinity by Lemma~\ref{lem:increase-bdry-length}.
Up to passing to a subsequence, we may therefore assume that $(t_n)$ is bounded and that $(\F(t_n x_n))$ is bounded in~$\T$.
We then argue as in the infinitesimal case.
If the supports $|x_n|$ stabilize, then, up to passing to a subsequence, $(t_nx_n)$ converges to $tx$ for some $t\geq 0$ and $x\in\overline{X}\smallsetminus X$; by continuity of~$F$ on each cell of~$\overline{X}$, the sequence $(\F(t_nx_n))$ converges to $\F(tx)$.
Since $x\notin X$ we have $\F(tx)\notin\Adm(\rho)$: indeed, the support $|x|$ is disjoint from some simple closed geodesic~$\underline{\smash{\gamma}}$, hence the corresponding element $\gamma\in\Gamma$ satisfies $\lambda_{\gamma}(F(tx))=\lambda_{\gamma}(\rho)$.
If the supports $|x_n|$ diverge, then for any $\varepsilon>0$ we can find a simple closed geodesic $\underline{\smash{\gamma}}$ forming angles $\leq\varepsilon$ with $|x_n|$ for all large enough~$n$.
Proposition~\ref{prop:technical} then implies
$$\sum_{p\in\underline{\smash{\gamma}}\cap |x_n|} W_{x_n}(p) \leq K \varepsilon \, \lambda_{\gamma}(\rho)$$
for the corresponding element $\gamma\in\Gamma$.
Consider the representative of~$\gamma$ in the metric $\F(t_n x_n)$ that agrees with $\underline{\smash{\gamma}}$ outside of the strips and also includes (nongeodesic) segments crossing each strip at constant distance from the waist.
The length of this representative is exactly
$$\lambda_{\gamma}(\rho) + t_n \sum_{p\in\underline{\smash{\gamma}}\cap |x_n|} \, W_{x_n}(p).$$
Thus the length of $\gamma$ in the metric $\F(t_n x_n)$ is $\leq (1+Kt_n \varepsilon)\,\lambda_{\gamma}(\rho)$, and so any limit $[\rho']\in\T$ of a subsequence of $(\F(t_n x_n))$ satisfies $\lambda_{\gamma}(\rho')\leq (1+Kt\varepsilon)\,\lambda_{\gamma}(\rho)$.
This holds for arbitrarily small~$\varepsilon$, hence $[\rho']\notin\Adm(\rho)$.
\end{proof}

\subsection{Reduction of Proposition~\ref{prop:mainsteps}.(4)}\label{subsec:reduc-mainstep4}

The following claim is a stepping stone to the proof of Proposition~\ref{prop:mainsteps}.(4); it will be proved in Section~\ref{sec:codim01}.
The numbering of the statements (0), (1) refers to the codimension of the faces.

\begin{claim}\label{claim:main}
Let $\Delta,\Delta'$ be two hyperideal triangulations of~$S$ differing by a diagonal switch.
\begin{enumerate}
  \item[(0)] The points $f(\alpha)$, for $\alpha\in\mathscr{A}$ ranging over the $3|\chi|$ edges of~$\Delta$, are the vertices of a nondegenerate, top-dimensional simplex in $\PP(H^1_{\rho}(\Gamma,\g))$, denoted~$f(\Delta)$.
  \item[(1)] The simplices $f(\Delta)$ and $f(\Delta')$ have disjoint interiors in $\PP(H^1_{\rho}(\Gamma,\g))$.
  \item[(2)] There exists a choice of geodesic arcs $\underline{\alpha}$ and waists $p_{\alpha}$ for which $f(\Delta)\cup f(\Delta')$ is convex in $\PP(H^1_{\rho}(\Gamma,\g))$.
\end{enumerate}
\end{claim}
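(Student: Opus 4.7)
For part~(0), since $\dim H^1_\rho(\Gamma,\g)=3|\chi|=|\Delta|$, nondegeneracy of the simplex $f(\Delta)$ is equivalent to the linear independence of the cohomology classes $\{{\boldsymbol f}(\alpha):\alpha\in\Delta\}$. The plan is to pair with length derivatives via Observation~\ref{obs:length-deriv}: assuming a relation $\sum_{\alpha\in\Delta} c_\alpha\,{\boldsymbol f}(\alpha)=0$, split the coefficients into a positive set $A=\{c_\alpha>0\}$ and a negative set $B=\{c_\alpha<0\}$ and derive the identity $\sum_A c_\alpha{\boldsymbol f}(\alpha)=\sum_B|c_\alpha|{\boldsymbol f}(\alpha)$ of two nonnegative combinations with disjoint support. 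Pairing with $\D\lambda_\gamma$ for any closed geodesic $\underline{\smash{\gamma}}$ crossing some arc of $A$ but no arc of $B$ then yields a contradiction, since by Observation~\ref{obs:length-deriv} the left-hand side becomes strictly positive while the right-hand side vanishes. To produce such~$\underline{\smash{\gamma}}$, I would begin with the $m$ boundary geodesics of the convex core (which detect precisely which funnels each arc exits into, giving $m$ linear constraints) and supplement them with simple closed geodesics carried by subsurfaces cut out by subcollections of $\Delta$; a combinatorial analysis on the dual trivalent graph of $\Delta$ should show that together these force all $c_\alpha=0$. I expect this combinatorial step to be the main technical content of part~(0).

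For part~(2), the plan is to exploit the freedom in choosing the geodesic representatives and waists by taking $\underline\alpha,\underline{\alpha'}$ as the two diagonals of a geodesic quadrilateral $Q\subset S$ whose four sides are representatives of the common arcs in $\Delta\cap\Delta'$ adjacent to $\alpha,\alpha'$ (possibly with repetitions), and setting $p_\alpha=p_{\alpha'}=p$, where $p=\underline\alpha\cap\underline{\alpha'}$ is the unique intersection point in the interior of~$Q$. I expect that with these choices, together with a suitable selection of widths $m_\beta$, the combined cohomology class ${\boldsymbol f}(\alpha)+{\boldsymbol f}(\alpha')$ equals a \emph{positive} combination $\sum_{\beta\in\Delta\cap\Delta'}\mu_\beta\,{\boldsymbol f}(\beta)$; this is a local identity inside~$Q$ that should reduce to an explicit $\PSL_2(\RR)$ trace computation in the spirit of the proof of Observation~\ref{obs:length-deriv}, the two strips meeting at $p$ being ``smoothed out'' into four strips along the sides of~$Q$. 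Convexity of $f(\Delta)\cup f(\Delta')$ is then immediate, since the projective segment from $f(\alpha)$ to $f(\alpha')$ passes through a point of the common face $f(\Delta\cap\Delta')$.

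Finally, part~(1) follows from parts~(0) and~(2) combined with the connectedness of the space of choices $(\underline\alpha,p_\alpha,m_\alpha)_{\alpha\in\mathscr{A}}$ (Remark~\ref{rem:connected-choices}). By part~(0) applied to both $\Delta$ and $\Delta'$, the coefficient $\lambda$ in the unique expansion ${\boldsymbol f}(\alpha')=\lambda\,{\boldsymbol f}(\alpha)+\sum_{\beta\in\Delta\cap\Delta'} \mu_\beta\,{\boldsymbol f}(\beta)$ is nonzero for every admissible choice (otherwise $f(\Delta')$ would be degenerate), and it varies continuously with the choice. Part~(2) produces a specific choice for which $\lambda<0$, so $\lambda<0$ for every choice by continuity. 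Since disjointness of the open interiors of $f(\Delta)$ and $f(\Delta')$ in $\PP(H^1_\rho(\Gamma,\g))$ is exactly equivalent to $\lambda<0$, this yields~(1). The main obstacle in the plan is the hyperbolic-geometric computation inside the quadrilateral~$Q$ required for~(2), although the combinatorial step in~(0) is itself delicate and will require careful case analysis.
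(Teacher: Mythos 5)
Your plan for part~(0) has a genuine gap. Given a putative relation $\sum_{\alpha\in\Delta}c_\alpha\,{\boldsymbol f}(\alpha)=0$, write $A=\{\alpha : c_\alpha>0\}$ and $B=\{\alpha : c_\alpha<0\}$; your strategy needs a closed geodesic crossing some arc of $A$ but no arc of $B$, and such a curve need not exist. If $B$ by itself already cuts $S$ into disks --- for instance $|A|=1$, so that $B$ contains five of the six arcs triangulating a four-holed sphere --- then $S\smallsetminus\bigcup_{\beta\in B}\underline{\smash{\beta}}$ contains no essential closed curve, and the boundary geodesics of the convex core generally meet arcs of both $A$ and~$B$, so they are of no direct help. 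For arbitrary sign patterns the curve-pairing strategy is simply unavailable, and I do not see a combinatorial repair on the dual graph. The paper's proof of~(0) is of a genuinely different nature: it integrates the relation to a $(\rho,0)$-equivariant assignment $\varphi:\widetilde{\mathscr{T}}\to\g$ of Killing fields to tiles (the formalism of Section~\ref{sec:formalism}), notes that each lifted edge $\tilde{\alpha}$ then carries a well-defined \emph{longitudinal} infinitesimal translation (both adjacent tiles impart the same component along $\tilde{\alpha}$, because the relative motion is orthogonal to~$\tilde{\alpha}$), and runs a maximum principle in Minkowski space: at an edge whose longitudinal motion has maximal Lorentzian norm, working in an affine chart of $\PP(\RR^{2,1})$ in which that edge's ideal endpoints lie at infinity, the relative motion across the edge is forced to be timelike, contradicting the spacelike nature of strip deformations. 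This argument has no curve-length analogue.

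Your plans for parts~(1) and~(2) are in line with the paper's. Deriving~(1) from~(0),~(2) and the connectedness of choices (Remark~\ref{rem:connected-choices}) is exactly the paper's argument, and your interpretation of disjoint interiors via the sign of the coefficient $\lambda$ is correct. Your choice for~(2) --- placing the waists $p_\alpha=p_{\alpha'}$ at the intersection point $\underline{\alpha}\cap\underline{\alpha}'$ --- is valid: in the language of Appendix~\ref{sec:remarks} it amounts to taking the timelike parameter $w\in\RR^{2,1}$ proportional to (a lift of) that point, and Lemma~\ref{lem:spacelike} then guarantees the six contributing relative motions $\psi(\tilde{\alpha}),\psi(\tilde{\alpha}'),\psi(\tilde{\beta}_i)$ are spacelike, each in the correct stem quadrant. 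But actually \emph{checking} that the resulting relation has the form ${\boldsymbol f}(\alpha)+{\boldsymbol f}(\alpha')=\sum_\beta\mu_\beta\,{\boldsymbol f}(\beta)$ with all $\mu_\beta\geq 0$ is the whole content of~(2). The paper establishes it not via a $\PSL_2(\RR)$ trace calculation (which would be unwieldy) but via the cross-product device $\varphi(\delta_i)=v_{i+1}-v_i$ after normalizing $v_1+v_3=v_2+v_4$, reading off the stem-quadrant conditions from Lemma~\ref{lem:inf-transl-in-R21}; you would still need to supply an equivalent computation, and doing so leads essentially back to Section~\ref{subsec:abcd}.
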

Here, we say that a closed subset of $\PP(H^1_{\rho}(\Gamma,\g))$ is \emph{convex} if it is convex and compact in some affine chart of $\PP(H^1_{\rho}(\Gamma,\g))$.
(Note that the whole image $f(X)$ has compact closure in some affine chart, by Proposition~\ref{prop:unit-peripheral}.) 
We now explain how Proposition~\ref{prop:mainsteps}.(4) follows from Claim~\ref{claim:main}.

\begin{proof}[Proof of Proposition~\ref{prop:mainsteps}.(4) assuming Claim~\ref{claim:main}]
Let us prove that $f$ is~a~lo\-cal homeomorphism near any point $x\in X$.
The simplicial structure of $\overline{X}$ defines a partition of $X$ into \emph{strata}, where the stratum of~$x$ is the unique simplex containing $x$ in its interior.
If $x$ belongs to a top-dimensional stratum of~$X$, then local homeomorphicity is Claim~\ref{claim:main}.(0).
If $x$ belongs to a codimension-$1$ stratum, then local homeomorphicity is Claim~\ref{claim:main}.(1).

Before treating the important case that $x$ belongs to a codimension-$2$ stratum, we first recall some useful classical terminology.
For any stratum $\sigma$ of~$X$, the union of the simplices of~$\overline{X}$ containing the closure $\overline{\sigma}$ of~$\sigma$ is the suspension of~$\overline{\sigma}$ with a simplicial sphere $L_{\sigma}$, called the \emph{link} of~$\sigma$. (That $L_\sigma$ is a sphere follows \eg from \cite{pen87}.)
The map $\boldsymbol{f}$ induces a cellwise affine map
$$\boldsymbol{f}_{\sigma} :\ \mathrm{C} L_{\sigma} \, \longrightarrow \, H^1_{\rho}(\Gamma,\g)/\spa(\boldsymbol{f}(\sigma)) \smallsetminus \{0\} \, \simeq \, \RR^{\mathrm{codim}(\sigma)} \smallsetminus \{0\} ,$$
called the \emph{link map} of $\boldsymbol{f}$ at~$\sigma$.
It also induces a \emph{(positively) projectivized link map}
$$f_\sigma :\ L_\sigma \, \longrightarrow \, \PP^+\big(H^1_{\rho}(\Gamma,\g)/\spa(\boldsymbol{f}(\sigma))\big) \, \simeq \, \SS^{\mathrm{codim}(\sigma)-1}.$$
To prove that $f$ is a local homeomorphism at~$x$, it is sufficient to prove that the projectivized link map of ${\boldsymbol f}$ at the stratum of~$x$ is a homeomorphism.

We now turn to codimension-$2$ strata of~$X$.
These come in two types.
The first type corresponds to decompositions of~$S$ into two hyperideal quadrilaterals and $2|\chi|-4$ hyperideal triangles.
Each quadrilateral can be divided into triangles in two ways, differing by a diagonal exchange.
Fixing the division of one quadrilateral, we find ourselves in a codimension-$1$ situation as above, where we can apply Claim~\ref{claim:main}.(1).
Thus the fact that $f$ is a local homeomorphism at~$x$ follows from the fact that there is no interference between the diagonal exchanges inside the two quadrilaterals.
More precisely, let $\sigma$ be the codimension-$2$ stratum and $\sigma',\sigma''$ the two neighboring codimension-$1$ strata, so that $\overline{\sigma}=\overline{\sigma}'\cap\overline{\sigma}''$.
Then $L_{\sigma}$ is the suspension of $L_{\sigma'}$ and~$L_{\sigma''}$, and $\boldsymbol{f}_{\sigma}=\boldsymbol{f}_{\sigma'}\oplus \boldsymbol{f}_{\sigma''}$.
Since $f_{\sigma'}$ and~$f_{\sigma''}$ are homeomorphisms,~so~is~$f_{\sigma}$.

The second type of codimension-$2$ strata of~$X$ corresponds to decompositions of~$S$ into one hyperideal pentagon and $2|\chi|-3$ hyperideal triangles.
The link of such a stratum $\sigma$ is a pentagon (a so-called \emph{pentagon move} between triangulations, see Figure~\ref{fig:pentagon}).
\begin{figure}[ht!]
\labellist
\small\hair 2pt
\pinlabel $\underline{\alpha}_1$ [l] at 76 157 
\pinlabel $\underline{\alpha}_2$ [l] at 105 81
\pinlabel $\underline{\alpha}_3$ [l] at 252 31
\pinlabel $\underline{\alpha}_4$ [l] at 338 184
\pinlabel $\underline{\alpha}_5$ [l] at 146 259
\pinlabel $\underline{\alpha}_1$ [l] at 126 36 
\pinlabel $\underline{\alpha}_2$ [l] at 256 81
\pinlabel $\underline{\alpha}_3$ [l] at 298 154
\pinlabel $\underline{\alpha}_4$ [l] at 212 259
\pinlabel $\underline{\alpha}_5$ [l] at 21 184
\pinlabel $0$ [l] at 512 173
\pinlabel $\boldsymbol{f}_\sigma(\alpha_1)$ [l] at 377 116
\pinlabel $\boldsymbol{f}_\sigma(\alpha_2)$ [l] at 487 51
\pinlabel $\boldsymbol{f}_\sigma(\alpha_3)$ [l] at 608 116
\pinlabel $\boldsymbol{f}_\sigma(\alpha_4)$ [l] at 543 246
\pinlabel $\boldsymbol{f}_\sigma(\alpha_5)$ [l] at 445 246
\pinlabel $\Delta$ [l] at 84 50
\pinlabel $\Delta'$ [l] at 282 50
\pinlabel $\boldsymbol{f}_\sigma(\Delta)$ [l] at 459 120
\pinlabel $\boldsymbol{f}_\sigma(\Delta')$ [l] at 530 120
\pinlabel $\boldsymbol{f}_\sigma$ [l] at 386 58
\endlabellist
\centering
\begin{changemargin}{-0.1cm}{0cm}
\includegraphics[width=12cm]{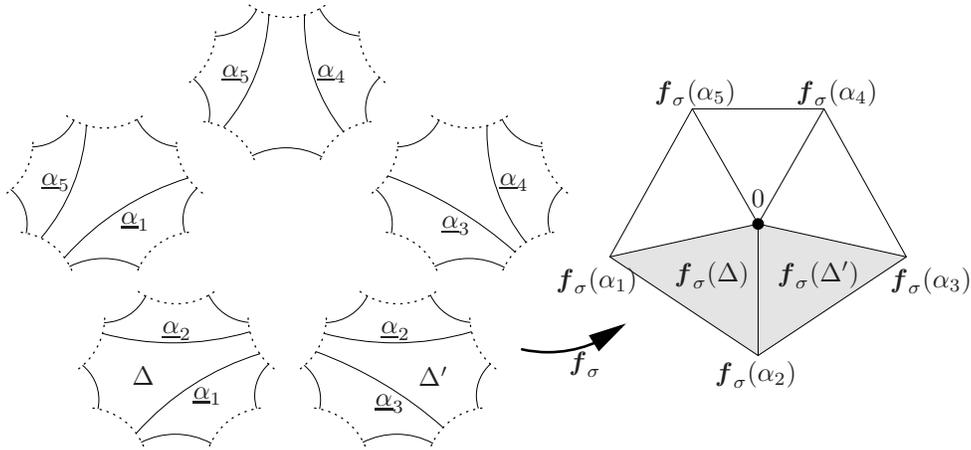}
\end{changemargin}
\caption{A pentagon move and its image under~$\boldsymbol{f}_\sigma$, where~$\sigma$ is a cellulation containing one pentagon and many triangles. In the source (left panel), the geodesic arcs $\underline{\alpha}_i$ cross various boundary components of the convex core of~$S$ (dashed) at near-right angles, creating hyperideal triangulations such as $\Delta$ or $\Delta'$.}
\label{fig:pentagon} 
\end{figure}
Consider an affine chart of $\PP(H^1_{\rho}(\Gamma,\g))$ containing $f(X)$ (Proposition~\ref{prop:unit-peripheral}) and equip it with a Euclidean metric so that the simplices $f(\Delta)$ in this chart become endowed with dihedral angles at their codimension-$2$ faces.
By Claim~\ref{claim:main}.(1), the projectivized link map $f_\sigma$ is a (piecewise projective) map from a circle to a circle, of degree either 1 or 2, because the image of each of the five segments in the source circle covers between 0 and $\pi$ of the target circle: five numbers $(\theta_i)_{i\in \ZZ/5\ZZ}$ in that range $(0,\pi)$ can add to either $2\pi$ or $4\pi$, but to no other multiple of~$2\pi$.
By Remark~\ref{rem:connected-choices}, the degree (1 or 2) remains constant as we change the positions of the geodesic arcs and waists, because one cannot pass continuously from $2\pi$ to~$4\pi$.
However, by Claim~\ref{claim:main}.(2) there is at least one choice of geodesic arcs~and waists for which the degree is~$1$.
Indeed, by convexity of $f(\Delta)\cup f(\Delta')$, one pair of consecutive numbers $\theta_i, \theta_{i+1}\in (0,\pi)$ has sum $\leq \pi$: the remaining three numbers cannot add to $\geq 3\pi$, so a total of $4\pi$ is impossible.
The degree is~$1$ for this choice of geodesic arcs and waists, hence for all choices, and $f_{\sigma}$ is a homeomorphism.
Therefore $f$ is a local homeomorphism at~$x$.

For $x$ in a stratum $\sigma$ of~$X$ of codimension $d\geq 3$, we argue by induction on~$d$.
The projectivized link map $f_{\sigma}$ is a map from a $(d-1)$-sphere to a $(d-1)$-sphere~$\SS^{d-1}$.
It is a local homeomorphism by induction, hence a covering.
But any connected covering of $\SS^{d-1}$ is a homeomorphism since $\SS^{d-1}$ is simply connected when $d\geq 3$.
Therefore $f$ is a local homeomorphism~at~$x$.

The fact that the restriction of $\F$ to $\mathrm{C}X$ is a local homeomorphism follows by the same argument as for the restriction of $f$ to~$X$.
Indeed, as in the proof of Proposition~\ref{prop:mainsteps}.(1), an infinitesimal perturbation of the widths of an actual (noninfinitesimal) strip deformation is the same as an infinitesimal deformation performed on the deformed surface.
\end{proof}

\section{Formalism for infinitesimal strip deformations}\label{sec:formalism}

In Section~\ref{sec:reduction} we explained how Theorems \ref{thm:main} and~\ref{thm:main-macro} reduce to Claim~\ref{claim:main}.
Before we prove Claim~\ref{claim:main} in Section~\ref{sec:codim01}, we introduce some notation and formalism that will also be useful in Section~\ref{sec:crooked-planes}.

\subsection{Killing vector fields on~$\HH^2$}\label{subsec:Killing-fields}

We identify the $3$-dimensional Minkowski space $\RR^{2,1}$ with the Lie algebra~$\g$, equipped with the symmetric bilinear form $\langle\cdot,\cdot\rangle$ equal to half the Killing form.
Note that $\g$ also naturally identifies with the space of \emph{Killing vector fields} on~$\HH^2$, \ie vector fields whose flow preserves the hyperbolic metric: an element $V\in\g$ defines the Killing field
$$p \longmapsto \frac{\D}{\D t}\Big|_{t=0}\, (e^{tV}\cdot p)\ \in T_p\HH^2 ,$$
and any Killing field is of this form for a unique $V\in\g$.
We shall write $V(p) \in T_p \HH^2$ for the vector at $p$ of the Killing field $V \in \g$.
Recall that $V\in\g$ is said to be \emph{hyperbolic} (\resp \emph{parabolic}, \resp \emph{elliptic}) if the one-parameter subgroup of~$G$ generated by~$e^V$ is hyperbolic (\resp parabolic, \resp elliptic).
We view the hyperbolic plane $\HH^2$ as a hyperboloid in~$\RR^{2,1}$ and its boundary at infinity $\partial_{\infty}\HH^2$ as the projectivized light cone:
\begin{eqnarray}\label{eqn:H2subsetR21}
\HH^2 & = & \big\{ v\in\RR^{2,1} ~|~ v_1^2 + v_2^2 - v_3^2 = -1,\ v_3>0\big\} ,\\
\partial_{\infty}\HH^2 & = & \big\{ [v]\in\PP(\RR^{2,1}) ~|~ v_1^2 + v_2^2 - v_3^2 = 0\big\} .\notag
\end{eqnarray}
The geodesic lines of~$\HH^2$ are the intersections of the hyperboloid with the linear planes of~$\RR^{2,1}$.
For any $p\in\HH^2$, the tangent space $T_p\HH^2$ naturally identifies with the linear subspace $p^{\perp}\subset\RR^{2,1}$ of vectors orthogonal to~$p$; this linear subspace is the translate back to the origin of the affine plane tangent at~$p$ to the hyperboloid $\HH^2 \subset \RR^{2,1}$.
For any $V\in\g\simeq\RR^{2,1}$ (seen as a Killing field on~$\HH^2$),
\begin{equation}\label{eqn:cross-product-killing}
V(p) = V \wedge p \ \in T_p\HH^2 = p^{\perp} \subset \RR^{2,1} \simeq \g,
\end{equation}
where $\wedge$ is the natural Minkowski cross product on~$\RR^{2,1}$:
$$(v_1,v_2,v_3) \wedge (w_1,w_2,w_3) := (- v_2 w_3 + v_3 w_2 \, , \,  - v_3 w_1 + v_1 w_3 \, , \, v_1 w_2 - v_2 w_1).$$
Note that for any $v,w\in\RR^{2,1}$ the vector $v\wedge w\in\RR^{2,1}$ is orthogonal to both $v$ and~$w$, and $(v,w,v\wedge w)$ is positively oriented (\ie satisfies the ``right-hand rule'').
Here is an easy consequence of~\eqref{eqn:cross-product-killing}.

\begin{lemma}\label{lem:inf-transl-in-R21}
An element $V\in\RR^{2,1}\simeq\g$, seen as a Killing vector field on~$\HH^2$, may be described as follows:
\begin{enumerate}
  \item If $V$ is timelike (\ie $\langle V, V \rangle < 0$), then it is elliptic: it is an infinitesimal rotation of velocity $\pm\sqrt{|\langle V, V \rangle |}$ centered at $\frac{\pm V}{\sqrt{|\langle V, V \rangle|}}\in\nolinebreak\HH^2\subset\nolinebreak\RR^{2,1}$.
  The velocity is positive if $V$ is future-pointing and negative otherwise.
  \item If $V$ is lightlike (\ie $\langle V, V \rangle = 0$ and $V\neq 0$), then it is parabolic with fixed point $[V]\in\partial_{\infty}\HH^2\subset\PP(\RR^{2,1})$.
  \item If $V$ is spacelike (\ie $\langle V, V\rangle > 0$), then it is hyperbolic: it is an infinitesimal translation of velocity $\sqrt{\langle V, V\rangle}$ with axis $\ell=V^{\perp}\cap\nolinebreak\HH^2\subset\nolinebreak\RR^{2,1}$.
  If $v^+, v^- \in V^{\perp}$ are future-pointing lightlike vectors representing respectively the attracting and repelling fixed points of $V$ in $\partial_{\infty}\HH^2\subset\PP(\RR^{2,1})$, then the triple $(v^+, V, v^-)$ is positively oriented (\ie satisfies the right-hand rule).
  \item[(3')] Let $\ell'$ be a geodesic of~$\HH^2$ whose endpoints in $\partial_{\infty}\HH^2\subset\PP(\RR^{2,1})$ are represented by future-pointing lightlike vectors $w_1,w_2\in\RR^{2,1}$.
  Then $V$ is an infinitesimal translation along an axis \emph{orthogonal} to~$\ell'$ if and only if $V$ is spacelike and belongs to $\operatorname{span}(w_1,w_2)$.
  Endow $\ell'$ with the transverse orientation placing $[w_1]$ on the left; then $V$ translates in the positive (\resp negative) direction if and only if $V \in \RR^*_+ w_1 + \RR^*_- w_2$ (\resp $V \in \RR^*_- w_1 + \RR^*_+ w_2$).
\end{enumerate}
\end{lemma}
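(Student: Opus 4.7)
The unifying tool is the formula $V(p) = V \wedge p$ from equation \eqref{eqn:cross-product-killing}, so I would study the Killing field $V$ by analyzing when $V \wedge p$ vanishes or becomes asymptotically tangent to a lightlike direction. A point $p \in \HH^2$ is fixed by the flow of $V$ iff $V \wedge p = 0$, iff $V$ and $p$ are linearly dependent in $\RR^{2,1}$. Since $p \in \HH^2$ is a unit timelike vector, such dependence is possible exactly when $V$ is timelike; in that case the unique fixed point in $\HH^2$ is the future-pointing representative of $\{\pm V/\sqrt{|\langle V, V\rangle|}\}$. This at once gives the fixed point in case (1) and rules out any interior fixed point in cases (2) and (3).

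For the remaining (boundary) fixed points, a lightlike direction $[w]$ is fixed iff $V \wedge w \in \RR w$. Pairing with $V$ and using $\langle V, V \wedge w\rangle = 0$ forces either $V$ parallel to $w$ (case (2)) or $\langle V, w\rangle = 0$ (case (3)). In case (2) the only lightlike vector collinear with a nonzero lightlike $V$ is $V$ itself, yielding the unique boundary fixed point $[V]$. In case (3) the plane $V^{\perp}$ has Lorentzian signature $(1,1)$ and thus carries exactly two null directions; their projectivizations provide the two boundary fixed points, while $V^{\perp}\cap\HH^2$ is the axis.

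To read off velocities I would invoke the Minkowski Lagrange identity $\langle V\wedge p, V\wedge p\rangle = \langle V,p\rangle^2 - \langle V,V\rangle\langle p,p\rangle$. For $p$ on the axis of a spacelike $V$ one has $\langle V,p\rangle=0$ and $\langle p,p\rangle=-1$, so $\|V(p)\| = \sqrt{\langle V,V\rangle}$, proving the translation-speed claim in (3); the same identity applied to the perpendicular direction at the fixed point of a timelike $V$ gives the angular velocity $\sqrt{|\langle V,V\rangle|}$ in (1). The sign and orientation assertions — that the rotation in (1) is in the positive direction exactly when $V$ is future-pointing, and that $(v^+,V,v^-)$ is positively oriented in (3) — reduce to a single model computation: conjugate $V$ to $c\, e_3$ (for case (1)) or to $c\, e_1$ (for case (3)), compute $e^{tV}$ explicitly to identify the attracting null direction, and observe that both the time-orientation of $V$ and the handedness of $(v^+,V,v^-)$ are preserved by the adjoint action of $G_0$; equivariance then propagates the verification to all $V$.

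For part (3'), the axis $V^{\perp}\cap\HH^2$ of a spacelike $V$ is orthogonal to $\ell'$ precisely when the spacelike pole $w_1\wedge w_2$ of $\ell'$ lies in $V^{\perp}$, i.e.\ when $\langle V, w_1\wedge w_2\rangle = 0$, i.e.\ when $V\in\operatorname{span}(w_1,w_2)$. Writing $V = a w_1 + b w_2$, the two null lines of $V^{\perp}\cap\operatorname{span}(w_1,w_2)$ identify the attracting and repelling fixed points of $V$, and the correspondence between the signs of $(a,b)$ and the transverse orientation of $\ell'$ follows from the orientation statement of (3) applied to the triple $(w_i, V, w_j)$. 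The main obstacle I anticipate is precisely this last sign bookkeeping in (3) and (3'): one must choose a single reference configuration (say $w_1 = e_2+e_3$, $w_2 = -e_2+e_3$, $V = c\, e_1$) and carefully check that the right-hand-rule orientation of $(v^+, V, v^-)$ is consistent with the paper's sign convention for $\wedge$, then invoke continuity and $G_0$-equivariance to cover the general case.
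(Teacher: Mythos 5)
The paper gives no explicit proof of this lemma — it simply states that it is ``an easy consequence'' of the cross-product formula $V(p) = V \wedge p$ from \eqref{eqn:cross-product-killing} — so your write-up supplies exactly the omitted verification, using the same starting point. Your treatment of (1), (2), and (3) is sound: the fixed-point analysis via $V \wedge p = 0$ and $V \wedge w \in \RR w$, the Lagrange identity $\langle V\wedge p, V\wedge p\rangle = \langle V,p\rangle^2 - \langle V,V\rangle\langle p,p\rangle$ (which does hold with this sign for the paper's $\wedge$ in signature $(+,+,-)$), and the model-plus-equivariance strategy for the orientation claim in (3) are all correct.

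There is, however, a genuine slip in your sketch of (3'). You write that ``the two null lines of $V^{\perp}\cap\operatorname{span}(w_1,w_2)$ identify the attracting and repelling fixed points of $V$.'' But when $V \in \operatorname{span}(w_1,w_2)$ is spacelike, $V^{\perp}\cap\operatorname{span}(w_1,w_2)$ is a single timelike line (it is $\RR q$, where $q$ is the foot of the axis of $V$ on $\ell'$), not a plane, so it has no pair of null lines. The fixed points of $V$ at infinity are the two null directions of the plane $V^{\perp}$, and these are \emph{not} $[w_1]$ and $[w_2]$, because the axis of $V$ is orthogonal to $\ell'$, not equal to it. Consequently, applying the orientation statement of (3) to the triple ``$(w_i, V, w_j)$'' is not valid — $w_1, w_2$ are not the $v^{\pm}$ of (3). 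Your proposed reference configuration $V = c\,e_1$ also does not satisfy the hypothesis of (3'), since $e_1 \notin \operatorname{span}(e_2+e_3,\,-e_2+e_3)$; it is a reference configuration for (3) only. The cleanest repair is direct: take any $q \in \ell'$, write $q = \alpha w_1 + \beta w_2$ with $\alpha,\beta > 0$ (possible because $q$ is future-pointing timelike and $w_1,w_2$ span the cone over $\ell'$), and compute $V(q) = V \wedge q = (a\beta - b\alpha)\,w_1 \wedge w_2$. The vector $V(q)$ is orthogonal to $\ell'$ at $q$, and since $V$ spacelike forces $ab<0$, the sign of $a\beta - b\alpha$ is the sign of $a$. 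It then remains only to check in one explicit model that the $+\,w_1\wedge w_2$ side of $\ell'$ is the one specified by the convention ``placing $[w_1]$ on the left,'' after which $G_0$-equivariance does the rest.
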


\subsection{Bookkeeping for cocycles} \label{subsec:ookkee}

We now introduce a formalism for describing cocycles that will be useful in the proofs of Claim~\ref{claim:main} (Section~\ref{sec:codim01}) and Theorem~\ref{thm:crooked-planes} (Section~\ref{subsec:proof-crooked-plane-conj}).
The basic idea, following Thurston's description of earthquakes \cite{thu86b}, is to consider deformations (to be named~$\varphi$) of the hyperbolic surface that are locally isometric everywhere except along some fault lines, where they are discontinuous.
Each such deformation is characterized up to equivalence by a map (to be named~$\psi$) describing the relative motion of two pieces of the surface adjacent to a fault line. 

Consider a \emph{geodesic cellulation} $\underline{\Delta}$ of the convex cocompact hyperbolic surface $S=\rho(\Gamma)\backslash\HH^2$, consisting of vertices~$\mathscr{V}$, geodesic edges~$\mathscr{E}$, and finite-sided polygons~$\mathscr{T}$, such that the intersection of two edges (\resp polygons), if nonempty, is a vertex (\resp an edge) in their boundary.
The elements of~$\mathscr{E}$ may be geodesic segments, properly embedded geodesic rays, or properly embedded geodesic lines; the elements of~$\mathscr{T}$ may have infinite area.
A particular case of interest is when $\mathscr{E}$ consists of the geodesic representatives of the supporting arcs of a strip deformation $\boldsymbol{f}(x)$; in this case all polygons are hyperideal and $\mathscr{V}=\emptyset$.
Let $\widetilde{\Delta}$ be the the preimage of $\underline{\Delta}$ in the universal cover~$\HH^2$.
The vertices $\widetilde{\mathscr{V}}$, edges $\widetilde{\mathscr{E}}$, and polygons $\widetilde{\mathscr{T}}$ of the cellulation~$\widetilde{\Delta}$ are the respective preimages of $\mathscr{V}, \mathscr{E}$, and $\mathscr{T}$.
In what follows, we refer to the elements of~$\widetilde{\mathscr{T}}$ as the \emph{tiles}.
We denote by $\pm \widetilde{\mathscr{E}}$ the set of edges of $\widetilde{\mathscr{E}}$ endowed with a transverse orientation.
For $e\in\pm\widetilde{\mathscr{E}}$, we denote by $-e$ the same edge with the opposite transverse orientation.

Let us first recall a description of infinitesimal earthquake transformations.
For simplicity, we assume that the fault locus is a finite disjoint union of properly embedded geodesic lines.
We may build a geodesic cellulation $\underline{\Delta}$ such that the union of all edges in~$\mathscr{E}$ contains the fault locus.
Now consider an assignment $\varphi : \widetilde{\mathscr{T}}\to\g$ of infinitesimal motions to the tiles $\widetilde{\mathscr{T}}$ of the lifted cellulation $\widetilde{\Delta}$ (using the interpretation of Section~\ref{subsec:Killing-fields}).
Define a map $\psi: \pm \widetilde{\mathscr{E}} \to \g$ by assigning to any transversely oriented edge $e \in \pm \widetilde{\mathscr{E}}$ the \emph{relative motion} along that edge: $\psi(e) = \varphi(\delta') - \varphi(\delta)\in\g$ where $\delta, \delta' \in \widetilde{\mathscr{T}}$ are the tiles adjacent to~$e$, with $e$ transversely oriented from $\delta$ to~$\delta'$.
The map $\varphi$ defines an infinitesimal left earthquake transformation of~$S$ if $\psi$ is $\rho$-equivariant, \ie
\begin{equation}\label{eqn:rho-equiv}
\psi(\rho(\gamma)\cdot {e}) = \Ad(\rho(\gamma)) \, \psi({e})
\end{equation}
for all $\gamma\in\Gamma$ and ${e}\in\pm\widetilde{\mathscr{E}}$, and if for any ${e} \in \pm \widetilde{\mathscr{E}}$ whose projection lies in the fault locus, $\psi({e})$ is an infinitesimal translation to the left along $e$ (and $\psi({e}) = 0$ if ${e}$ does not project to the fault locus).
It is a simple observation that the infinitesimal deformation of~$S$ described by~$\varphi$ depends only on the relative motion map~$\psi$; that is, $\varphi$ may be recovered from $\psi$ up to a global isometry (see below). 

We now generalize this description of infinitesimal earthquakes and work with a larger class of deformations, for which the relative motion between adjacent tiles is allowed to be any infinitesimal motion.
Consider a $\rho$-equivariant map $\psi: \pm \widetilde{\mathscr{E}} \to \g$ satisfying the following \emph{consistency conditions}:
\begin{itemize}
  \item $\psi(-{e})=-\psi({e})$ for all ${e}\in\pm\widetilde{\mathscr{E}}$;
  \item the total motion around any vertex is zero: if ${e}_1,\ldots,{e}_k\in\pm\widetilde{\mathscr{E}}$ are the transversely oriented edges crossed (in the positive direction) by a loop encircling a vertex $v\in\widetilde{\mathscr{V}}$, then $\sum_{i=1}^k \psi({e}_i) = 0$.
\end{itemize}
Under these conditions, $\psi$ defines a cohomology class $[u] \in H^1_{\rho}(\Gamma,\g)$ as follows.
Choose a base tile $\delta_0\in\widetilde{\mathscr{T}}$ and an element $v_0 \in \g$.
Then $\psi$ determines a map $\varphi : \widetilde{\mathscr{T}} \to \g$ by integration: given a tile $\delta\in\widetilde{\mathscr{T}}$, consider a path $p : [0,1] \to \HH^2$ with initial endpoint $p(0)$ in the interior of~$\delta_0$ and final endpoint $p(1)$ in the interior of~$\delta$, such that $p(t)$ avoids the vertices $\widetilde{\mathscr{V}}$ of~$\widetilde{\Delta}$; we set
$$\varphi(\delta) := v_0 + \sum \psi({e}) ,$$
where the sum is over all transversely oriented edges ${e}\in\pm\widetilde{\mathscr{E}}$ crossed (in the positive direction) by the path $p(t)$.
This does not depend on the choice of~$p$, by the consistency conditions above.
For any tile $\delta\in\widetilde{\mathscr{T}}$,
$$u(\gamma) := \varphi(\rho(\gamma) \cdot \delta) - \Ad(\rho(\gamma))\,\varphi(\delta)$$
defines a $\rho$-cocycle $u : \Gamma \to \g$, independent of~$\delta$: we shall say that $\varphi$ is \emph{$(\rho,u)$-equivariant}.
The cohomology class of $u$ depends only on~$\psi$, not on the choice of $\delta_0$ and~$v_0$: indeed, the map integrating~$\psi$ with initial data $\delta'_0\in\widetilde{\mathscr{T}}$ and $v_0'\in\g$ differs from~$\varphi$ by the constant vector $w_0:=v_0'-\varphi(\delta_0')$, and therefore the cocycle it determines differs from $u$ by the coboundary $u_{w_0}=(\gamma\mapsto w_0-\Ad(\rho(\gamma))\,w_0)$. 
The map $\varphi$ assigns infinitesimal motions to the tiles in~$\widetilde{\mathscr{T}}$; by construction, $\psi({e})=\varphi(\delta')-\varphi(\delta)$ for any tiles $\delta,\delta'$ adjacent to an edge ${e}\in\pm\widetilde{\mathscr{E}}$ transversely oriented from $\delta$ to~$\delta'$.

Let $\Psi(\pm \widetilde{\mathscr{E}},\g)$ be the space of $\rho$-equivariant maps $\psi : \pm\widetilde{\mathscr{E}} \to \g$ satisfying the two consistency conditions above.
The integration process $\psi \mapsto [u]$ we have just described defines an $\RR$-linear map
\begin{equation}\label{eqn:map-L}
\mathscr{L} : \Psi\big(\!\pm\widetilde{\mathscr{E}},\g\big) \longrightarrow H^1_\rho (\Gamma, \g) .
\end{equation}
Note that each tile has trivial stabilizer in~$\Gamma$, because it is finite-sided and $\Gamma$ is torsion-free.
Therefore the map $\mathscr{L}$ is onto, \ie any infinitesimal deformation $[u] \in H^1_{\rho}(\Gamma,\g)$ of $\rho$ is achieved by some assignment $\psi$ of relative motions.
Indeed, choose a representative in $\widetilde{\mathscr{T}}$ for each element of~$\mathscr{T}$, and choose arbitrary values for $\varphi$ on these representatives.
We can extend this to a $(\rho,u)$-equivariant map $\varphi : \widetilde{\mathscr{T}} \to \g$, and define $\psi({e}):=\varphi(\delta')-\nolinebreak\varphi(\delta)$ for any tiles $\delta,\delta'$ adjacent to an edge ${e}\in\pm\widetilde{\mathscr{E}}$ transversely oriented from $\delta$ to~$\delta'$.
This map $\psi$ satisfies the consistency conditions above.

\subsection{Infinitesimal strip deformations}\label{subsec:formalism-strips}

In our main case of interest, the cellulation $\underline{\Delta}$ is a geodesic hyperideal triangulation and the edges $\mathscr{E}$ of $\underline{\Delta}$ are the geodesic representatives of the supporting arcs of an infinitesimal strip deformation.

\begin{remark}
In our geodesic hyperideal triangulations, we do \emph{not} a priori require the extended edges to meet in a single point in $\PP^2(\RR)\smallsetminus\HH^2$.
\end{remark}

Recall that $(\underline{\alpha},p_{\alpha},m_{\alpha})_{\alpha\in\mathscr{A}}$ denotes the choice of geodesic representatives, waists, and widths of strips defining~${\boldsymbol f}$ (see Section~\ref{subsec:arc-complex}).

\begin{definition}\label{def:psi-alpha}
Let $\alpha\in\mathscr{A}$ be an arc of~$S$ with $\underline{\alpha}\in\mathscr{E}$.
The \emph{relative motion map} $\psi_{\alpha}\in\Psi(\pm \widetilde{\mathscr{E}},\g)$ of the infinitesimal strip deformation ${\boldsymbol f}(\alpha)\in H^1_{\rho}(\Gamma,\g)$ is defined as follows:
\begin{itemize}
  \item for any transversely oriented lift $\tilde{\alpha}\in\pm\widetilde{\mathscr{E}}$ of~$\underline{\alpha}$, the element $\psi_{\alpha}(\tilde{\alpha})\in\g$ is the infinitesimal translation of velocity $m_{\alpha}$ along the axis orthogonal to $\tilde{\alpha}$ at (the lift of)~$p_{\alpha}$, in the positive direction;
  \item $\psi_{\alpha}(e)=0$ for any other transversely oriented edge $e\in\pm\widetilde{\mathscr{E}}$.
\end{itemize}
\end{definition}

Recall the map $\mathscr{L}$ from \eqref{eqn:map-L}.
The following observation is elementary but essential for the proof of Claim~\ref{claim:main} in Sections \ref{subsec:independent} and~\ref{subsec:abcd}.

\begin{observation}\label{obs:realize-strips}
The relative motion map $\psi_{\alpha}$ \emph{realizes} the infinitesimal strip deformation $\boldsymbol f(\alpha)$, in the sense that $\mathscr{L}(\psi_{\alpha}) = \boldsymbol f(\alpha)$.
\end{observation}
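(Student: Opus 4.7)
The plan is to exhibit a concrete $(\rho,u)$-equivariant map $\varphi:\widetilde{\mathscr{T}}\to\g$ whose associated relative motion function is $\psi_\alpha$ and whose cocycle $u$ represents $\boldsymbol{f}(\alpha)$. For each lift $\tilde{\alpha}$ of~$\underline{\alpha}$ and each $t\geq 0$, let $\tau^{\tilde{\alpha}}_t\in G$ denote the hyperbolic translation of length $m_\alpha t$ along the geodesic perpendicular to~$\tilde{\alpha}$ at the lift of~$p_\alpha$, in the positive direction; note $\tau^{\tilde{\alpha}}_0=e$ and the family $\{\tau^{\tilde{\alpha}}_t\}_{\tilde{\alpha}}$ is $\rho$-equivariant in the sense that $\tau^{\rho(\gamma)\tilde{\alpha}}_t=\rho(\gamma)\tau^{\tilde{\alpha}}_t\rho(\gamma)^{-1}$. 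Fix a base tile $\delta_0\in\widetilde{\mathscr{T}}$ and define $D_t:\widetilde{\mathscr{T}}\to G$ by $D_t(\delta_0)=e$ and the recursion: across a transversely oriented edge $\tilde{e}$ from $\delta$ to $\delta'$, set $D_t(\delta')=D_t(\delta)$ if $\tilde{e}$ does not lift~$\underline{\alpha}$, and $D_t(\delta')=D_t(\delta)\,\tau^{\tilde{e}}_t$ otherwise. This is unambiguous since the cellulation has no vertices in~$\HH^2$, so no monodromy relation around a vertex needs to be satisfied.

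I would then verify that $\rho_t(\gamma):=D_t(\rho(\gamma)\delta_0)\,\rho(\gamma)$ defines a homomorphism $\rho_t\in\Hom(\Gamma,G)$ and that $\rho_t$ is the holonomy of a strip deformation of $S$ along $\underline{\alpha}$ with waist~$p_\alpha$ and width $m_\alpha t$, in the sense of Definition~\ref{def:strip-deform}. The homomorphism property follows from the $\rho$-equivariance of the $\tau^{\tilde{\alpha}}_t$, and the geometric interpretation is that the developed tiles $\{D_t(\delta)\cdot\delta\}_{\delta\in\widetilde{\mathscr{T}}}$ fit together so that tiles separated by a lift of $\underline{\alpha}$ are now separated by a hyperbolic strip of width $m_\alpha t$ perpendicular to that lift at the waist, while tiles not separated by a lift of $\underline{\alpha}$ still meet along their common edge as in~$S$. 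By Definition~\ref{def:inf-strip-deform}, the path $t\mapsto\rho_t$ therefore realizes $\boldsymbol{f}(\alpha)$: the cocycle $u(\gamma):=\tfrac{\D}{\D t}\big|_{t=0}\rho_t(\gamma)\rho(\gamma)^{-1}=\tfrac{\D}{\D t}\big|_{t=0}D_t(\rho(\gamma)\delta_0)$ represents $\boldsymbol{f}(\alpha)$ in $H^1_\rho(\Gamma,\g)$.

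Finally, set $\varphi(\delta):=\tfrac{\D}{\D t}\big|_{t=0}D_t(\delta)\in\g$, noting that $D_0(\delta)=e$ for all $\delta$ and $\varphi(\delta_0)=0$. Differentiating the recursion at $t=0$ gives $\varphi(\delta')-\varphi(\delta)=0$ across any edge $\tilde{e}$ not lifting~$\underline{\alpha}$, and $\varphi(\delta')-\varphi(\delta)=\tfrac{\D}{\D t}\big|_{t=0}\tau^{\tilde{\alpha}}_t$ across a lift $\tilde{\alpha}$, which by Lemma~\ref{lem:inf-transl-in-R21}.(3) is exactly the infinitesimal translation of velocity~$m_\alpha$ orthogonal to~$\tilde{\alpha}$ at the lift of~$p_\alpha$ in the positive direction, i.e., $\psi_\alpha(\tilde{\alpha})$ as given by Definition~\ref{def:psi-alpha}. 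Hence the relative motion function of $\varphi$ is $\psi_\alpha$, and because $\varphi(\delta_0)=0$, the cocycle $\varphi(\rho(\gamma)\delta_0)-\Ad(\rho(\gamma))\varphi(\delta_0)$ produced by the integration procedure of~\S\ref{subsec:ookkee} coincides with $u$. Therefore $\mathscr{L}(\psi_\alpha)=[u]=\boldsymbol{f}(\alpha)$. The delicate step is the geometric identification in the second paragraph, that the recursively defined $D_t$ really produces the strip deformation of Definition~\ref{def:strip-deform} rather than some other perturbation; once this is granted, the final derivative computation is essentially the infinitesimal shadow of the matrix calculation already performed in the proof of Observation~\ref{obs:length-deriv}.
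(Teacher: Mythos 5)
Your argument is correct and supplies the verification that the paper omits when it calls the observation ``elementary.'' The one-parameter family $D_t$ built over the tree of tiles in $\widetilde{\mathscr{T}}$, the geometric identification of the deformed tiling $\{D_t(\delta)\cdot\delta\}$ with the strip deformation of Definition~\ref{def:strip-deform} (with waist $p_\alpha$ and width $m_\alpha t$), and the derivative computation identifying $\varphi(\delta')-\varphi(\delta)$ with $\psi_\alpha(\tilde{\alpha})$ all go through as you describe and match the intended reasoning.
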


In Section~\ref{subsec:abcd}, it will be necessary to work simultaneously with two different geodesic hyperideal triangulations $\underline{\Delta}$ and~$\underline{\Delta}'$.
Consider a common refinement $\underline{\Delta}''$ of $\underline{\Delta},\underline{\Delta}'$.
For $\underline{\Delta}',\underline{\Delta}''$, we use notation $\mathscr{E}',\mathscr{E}''$ and $\mathscr{L}',\mathscr{L}''$ similar to Section~\ref{subsec:ookkee}.
Then there are natural inclusion maps
$$\mathscr{I}: \Psi(\pm \widetilde{\mathscr{E}}, \g) \hookrightarrow \Psi(\pm \widetilde{\mathscr{E}}'', \g) \quad\mathrm{and}\quad \mathscr{I}': \Psi(\pm \widetilde{\mathscr{E}}', \g) \hookrightarrow \Psi(\pm \widetilde{\mathscr{E}}'', \g)$$
defined as follows: for any $\psi \in \Psi(\pm \widetilde{\mathscr{E}}, \g)$ and $e''\in\pm\widetilde{\mathscr{E}}''$, set
\begin{itemize}
  \item $\mathscr{I}(\psi)({e}'') := \varepsilon\,\psi({e})$ if $e''$ is contained in an edge $e\in\pm\widetilde{\mathscr{E}}$, with $\varepsilon=1$ (\resp $\varepsilon=-1$)  if the transverse orientations of $e''$ and~$e$ agree (\resp disagree),
  \item $\mathscr{I}({e}'') :=0$ otherwise,
\end{itemize}
and similarly for~$\mathscr{I}'$.
By using these inclusion maps we may compare relative motion maps defined on the two different triangulations $\underline{\Delta}$ and~$\underline{\Delta}'$.
We consider $\psi \in \Psi(\pm \widetilde{\mathscr{E}}, \g)$ and $\psi' \in \Psi(\pm \widetilde{\mathscr{E}}', \g)$ equivalent if $\mathscr{I}(\psi) = \mathscr{I}'(\psi')$.
Here are two simple observations:

\begin{observations}\label{obs:two-triang}
(1) $\mathscr{L}''\circ \mathscr{I} = \mathscr{L}$  and $\mathscr{L}''\circ \mathscr{I}'= \mathscr{L}'$.\\
(2) Let $\alpha\in\mathscr{A}$ be an arc of~$S$ with $\underline{\alpha}\in\mathscr{E}\cap\mathscr{E}'\subset\mathscr{E}''$.
Let $\psi_{\alpha} \in \Psi(\pm \widetilde{\mathscr{E}}, \g)$ and $\psi_{\alpha}' \in \Psi(\pm \widetilde{\mathscr{E}}', \g)$ be the two relative motion maps realizing $\boldsymbol f(\alpha)$ as in Observation~\ref{obs:realize-strips}, so that $\mathscr{L}(\psi_{\alpha}) = \mathscr{L}'(\psi_{\alpha}') = \boldsymbol{f}(\alpha)$.
Then $\mathscr{I}(\psi_{\alpha}) = \mathscr{I}'(\psi_{\alpha}')$.
\end{observations}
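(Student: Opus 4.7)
The plan is to prove (1) directly from the definition of $\mathscr{L}$ via integration of relative motions along paths, and (2) by unwinding the definitions and checking agreement edge-by-edge on $\pm\widetilde{\mathscr{E}}''$.

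For (1), I would begin by choosing compatible base data for the integration procedure: pick a tile $\delta_0''\in\widetilde{\mathscr{T}}''$ contained in a tile $\delta_0\in\widetilde{\mathscr{T}}$, and use the \emph{same} initial vector $v_0\in\g$ to build both $\varphi : \widetilde{\mathscr{T}}\to\g$ from $\psi$ and $\varphi'' : \widetilde{\mathscr{T}}''\to\g$ from $\mathscr{I}(\psi)$. The key step is to prove that $\varphi''(\delta'')=\varphi(\delta)$ whenever $\delta''\subset\delta$. For this, pick a path from the interior of $\delta_0''$ to the interior of $\delta''$ avoiding the vertices of $\underline{\Delta}''$ (and hence, a fortiori, those of $\underline{\Delta}$). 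Each edge of $\pm\widetilde{\mathscr{E}}''$ crossed is either contained in an edge $e\in\pm\widetilde{\mathscr{E}}$, in which case by definition $\mathscr{I}(\psi)$ contributes exactly $\psi(e)$ with the correct transverse orientation, or it is not, in which case $\mathscr{I}(\psi)$ contributes $0$. Viewing the same path through the coarser cellulation $\underline{\Delta}$, its crossings with $\widetilde{\mathscr{E}}$ correspond bijectively (with matching transverse orientations) to the nonzero crossings above, so the telescoping sums defining $\varphi''(\delta'')$ and $\varphi(\delta)$ coincide.

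The second step for (1) is to deduce that the cocycles themselves coincide. For any $\gamma\in\Gamma$, the tile $\rho(\gamma)\cdot\delta_0''$ lies in $\rho(\gamma)\cdot\delta_0$, so by the previous step $\varphi''(\rho(\gamma)\cdot\delta_0'')=\varphi(\rho(\gamma)\cdot\delta_0)$; subtracting $\Ad(\rho(\gamma))\,v_0$ from both sides yields the equality of $\rho$-cocycles, hence of cohomology classes. The argument for $\mathscr{L}''\circ\mathscr{I}'=\mathscr{L}'$ is identical. I do not expect any serious obstacle here; the only small subtlety is the availability of a path avoiding the $0$-skeleton of the refined cellulation, which is clear by transversality.

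For (2), the argument is essentially formal. By Definition~\ref{def:psi-alpha}, the map $\psi_\alpha$ vanishes on every edge of $\pm\widetilde{\mathscr{E}}$ not projecting to $\underline{\alpha}$, and on a transversely oriented lift $\tilde{\alpha}$ it returns the Killing field $V_{\tilde{\alpha}}\in\g$ representing the infinitesimal translation of velocity $m_\alpha$ orthogonal to $\tilde{\alpha}$ through the lifted waist, in the positive direction. The same description (with the same Killing field $V_{\tilde{\alpha}}$, since the data $(\underline{\alpha},p_\alpha,m_\alpha)$ is independent of the ambient triangulation) applies to $\psi_\alpha'$ on $\pm\widetilde{\mathscr{E}}'$. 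Now take any $e''\in\pm\widetilde{\mathscr{E}}''$: if $e''$ is not contained in a lift of $\underline{\alpha}$, then both $\mathscr{I}(\psi_\alpha)(e'')$ and $\mathscr{I}'(\psi_\alpha')(e'')$ are zero; otherwise $e''$ is contained in a unique transversely oriented lift $\tilde\alpha$, and by the sign convention in the definitions of $\mathscr{I},\mathscr{I}'$ both values equal $V_{\tilde\alpha}$ read with the transverse orientation of $e''$. Hence the two maps agree on $\pm\widetilde{\mathscr{E}}''$. The hypothesis $\underline{\alpha}\in\mathscr{E}\cap\mathscr{E}'$ is essential (it is what makes $\psi_\alpha$ and $\psi_\alpha'$ even defined), but once one has it the only bookkeeping concerns signs of transverse orientations, and these are designed into $\mathscr{I}$ and $\mathscr{I}'$ precisely so that this matching holds.
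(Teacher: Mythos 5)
The paper states these as ``simple observations'' without proof, so there is no official argument to compare against; your write-up is a correct and careful filling-in of exactly the reasoning the paper takes for granted. Your proof of (1) --- choosing a base tile $\delta_0''\subset\delta_0$ with the same initial vector $v_0$, then matching the telescoping sums over a path avoiding the $0$-skeleton of $\widetilde\Delta''$, and finally transporting the equality $\varphi''(\delta'')=\varphi(\delta)$ for $\delta''\subset\delta$ through the $\rho(\gamma)$-action to get equality of cocycles (not merely of cohomology classes) --- is sound, and your proof of (2) is the right edge-by-edge check, using that $\psi_\alpha(\tilde\alpha)$ and $\psi'_\alpha(\tilde\alpha)$ are the same Killing field because they depend only on the data $(\underline\alpha,p_\alpha,m_\alpha)$ and not on the ambient triangulation, while the sign conventions built into $\mathscr I$ and $\mathscr I'$ make the pulled-back values agree on every sub-edge of $\pm\widetilde{\mathscr E}''$.
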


Observation~\ref{obs:two-triang}.(2) means that for any arc $\alpha\in\mathscr{A}$ the map $\psi_{\alpha}$ is well defined, up to equivalence, independently of the geodesic hyperideal triangulation~$\underline{\Delta}$ containing~$\underline{\alpha}$.

We will also need to compose (\ie add) infinitesimal strip deformations supported on arcs that intersect.
Suppose that $\alpha$ and $\alpha'$ are two arcs of~$S$ with geodesic representatives $\underline{\alpha}$ and $\underline{\alpha}'$ contained in $\underline{\Delta}$ and~$\underline{\Delta}'$, respectively.
We define the sum $\psi_{\alpha} + \psi_{\alpha'}$ to be
$$\psi_{\alpha} + \psi_{\alpha'} := \mathscr{I}(\psi_{\alpha}) + \mathscr{I}'(\psi_{\alpha'}) \in \Psi(\pm \widetilde{\mathscr{E}}'', \g).$$
Then $\mathscr{L}$ commutes with this operation: by Observation~\ref{obs:two-triang}.(1),
$$\mathscr{L}''(\psi_{\alpha} + \psi_{\alpha'}) = \mathscr{L}(\psi_{\alpha}) + \mathscr{L}'(\psi_{\alpha'}) .$$

\section{Behavior of $f$ at faces of codimension $0$ and~$1$}\label{sec:codim01}

We now prove Claim~\ref{claim:main}, using the formalism of Section~\ref{sec:formalism}.

\subsection{Proof of Claim~\ref{claim:main}.(0)}\label{subsec:independent}

Let $\underline{\Delta}$ be the geodesic hyperideal triangulation of $S=\rho(\Gamma)\backslash\HH^2$ whose edges $\mathscr{E}$ are the geodesic representatives $\underline{\alpha}$, chosen in the definition of the map $\boldsymbol f$ (see Section~\ref{subsec:arc-complex}), of the arcs $\alpha$ of~$\Delta$.
We continue with the notation of Sections \ref{subsec:ookkee} and~\ref{subsec:formalism-strips}.
Let us prove that the infinitesimal strip deformations ${\boldsymbol f}(\alpha)\in H^1_{\rho}(\Gamma,\g)$, for $\underline{\alpha}\in\mathscr{E}$, span all of $H^1_{\rho}(\Gamma,\g)$.
Since $\dim H^1_{\rho}(\Gamma,\g)=\#\mathscr{E}$, it is equivalent to show that the ${\boldsymbol f}(\alpha)$ are linearly independent.
Suppose that
\begin{equation}\label{eqn:dependence}
\sum_{\underline{\alpha}\in\mathscr{E}} c_{\alpha} \, {\boldsymbol f}(\alpha) = 0
\end{equation}
for some $(c_{\alpha})\in\RR^{\mathscr{E}}$, and let us prove that $c_{\alpha}=0$ for all $\underline{\alpha}\in\mathscr{E}$.
By Observation~\ref{obs:realize-strips} and linearity of~$\mathscr{L}$, the left-hand side of \eqref{eqn:dependence} is realized by the $\rho$-equivariant relative motion of the tiles $\psi := \sum_{\underline{\alpha}\in\mathscr{E}} c_{\alpha} \psi_{\alpha} : \pm\widetilde{\mathscr{E}}\to\g$, such that for any transversely oriented lift $\tilde{\alpha}\in\pm\widetilde{\mathscr{E}}$ of any $\underline{\alpha}\in\mathscr{E}$,
\begin{equation}\label{eqn:description-psi}
\psi(\tilde{\alpha}) = c_{\alpha} \, \psi_{\alpha}(\tilde{\alpha})
\end{equation}
(see Definition~\ref{def:psi-alpha}).
Since $\mathscr{L}(\psi)=0$ by \eqref{eqn:dependence}, there is a map $\varphi : \widetilde{\mathscr{T}}\to\nolinebreak\g$ integrating~$\psi$ that is $(\rho,0)$-equivariant (\ie $\rho$-equivariant in the sense of~\eqref{eqn:rho-equiv}).
Indeed, choose an arbitrary base tile $\delta_0$ and an arbitrary motion $v_0\in\g$ of that tile.
The map $\varphi' : \widetilde{\mathscr{T}}\to\g$ determined by $\psi$ and this initial data, as in Section~\ref{subsec:ookkee}, is $(\rho,u_{w_0})$-equivariant for some $\rho$-\emph{coboundary} $u_{w_0}=(\gamma\mapsto w_0 - \Ad(\rho(\gamma))\,w_0)$.
Then the map $\varphi := \varphi' - w_0 : \widetilde{\mathscr{T}} \to \g$ integrates~$\psi$ and is $(\rho,0)$-equivariant.

Consider an edge $\tilde{\alpha}\in\widetilde{\mathscr{E}}$, with adjacent tiles $\delta,\delta'\in\widetilde{\mathscr{T}}$.
The vectors $v:=\varphi(\delta)$ and $v':=\varphi(\delta')$ encode the infinitesimal motions of the respective tiles $\delta,\delta'$.
The vector $v\in\g$ may be decomposed as $v = v_t + v_{\ell}$, where $v_t\in\spa(\tilde{\alpha})\subset\RR^{2,1}$ is called the \emph{transverse motion} and $v_{\ell}\in\spa(\tilde{\alpha})^{\perp}$ the \emph{longitudinal motion}.
By Lemma~\ref{lem:inf-transl-in-R21}.(3), the longitudinal motion $v_{\ell}$ is an infinitesimal translation with axis~$\tilde{\alpha}$.
Similarly, we decompose $v'$ as $v' = v'_t + v'_{\ell}$.
By \eqref{eqn:description-psi} and Lemma~\ref{lem:inf-transl-in-R21}.(3'), if we endow $\tilde{\alpha}$ with the transverse orientation from $\delta$ to~$\delta'$, then $\psi(\tilde{\alpha})=v'-v\in\spa(\tilde{\alpha})$, which means that $v$ and~$v'$ impart the same longitudinal motion to~$\tilde{\alpha}$, \ie $v_{\ell}=v'_{\ell}$.
Thus $\tilde{\alpha}$ receives a well-defined amount $\sqrt{\langle v_{\ell},v_{\ell}\rangle} = \sqrt{\langle v'_{\ell},v'_{\ell}\rangle}$ of longitudinal motion from~$\varphi$, equal to the longitudinal motion of the tile on either side of the edge; this amount is invariant under the action of $\rho(\Gamma)$ because $\varphi$ is $(\rho,0)$-equivariant.
It is sufficient to prove that all longitudinal motions of edges of $\widetilde{\mathscr{E}}$ are zero, because then $\varphi=0$ and $\psi=0$, and so the linear dependence \eqref{eqn:dependence} is trivial.
Indeed, the three linear forms on~$\RR^{2,1}$ that vanish on the three edges bounding a tile form a dual basis of~$\RR^{2,1}$ (because the edges, extended in $\PP(\RR^{2,1})$, have no common intersection point), hence a Killing field that imparts zero longitudinal motion to all three edges must be zero.
We now assume by contradiction that not all longitudinal motions are zero.

Choose for~$\tilde{\alpha}$ an edge receiving maximal longitudinal motion, \ie such that $v_{\ell}=v'_{\ell}$ has maximal Lorentzian norm $\sqrt{\langle v_{\ell},v_{\ell}\rangle}$ among all edges.
Let $A,B,C,D,E,F$ (\resp $A,B,C',D',E',F'$) be the endpoints in $\partial_{\infty}\HH^2\subset\PP(\RR^{2,1})$ of all the edges of $\delta$ (\resp $\delta'$), cyclically ordered (see Figure~\ref{fig:projective}, left).
\begin{figure}[ht!]
\labellist
\small\hair 2pt
\pinlabel {$\delta$} at 119 81
\pinlabel {$\delta'$} at 118 36
\pinlabel {$\tilde{\alpha}$} at 142 56
\pinlabel {$A$} at 67 58
\pinlabel {$B$} at 213 57
\pinlabel {$C$} at 212 115
\pinlabel {$D$} at 170 153
\pinlabel {$E$} at 110 153
\pinlabel {$F$} at 70 117
\pinlabel {$C$} at 422 251
\pinlabel {$D$} at 436 209
\pinlabel {$E$} at 446 196
\pinlabel {$F$} at 488 182
\pinlabel {${}_{2c}$} at 427 154
\pinlabel {${}_{2d}$} at 455 155
\pinlabel {${}_{2e}$} at 481 154
\pinlabel {${}_{2f}$} at 559 154
\pinlabel {${}_{2/c}$} at 390 321
\pinlabel {${}_{2/d}$} at 390 243
\pinlabel {${}_{2/e}$} at 390 215
\pinlabel {${}_{2/f}$} at 390 190
\pinlabel {$C'$} at 199 29
\pinlabel {$D'$} at 157 20
\pinlabel {$E'$} at 133 20
\pinlabel {$F'$} at 75 31
\pinlabel {$Q$} at 139 208
\pinlabel {$Q'$} at 376 137
\pinlabel {$C'$} at 380 73
\pinlabel {$D'$} at 365 114
\pinlabel {$E'$} at 354 127
\pinlabel {$F'$} at 311 141
\pinlabel {$Q$} at 427 187
\pinlabel {$\mathcal{H}$} at 418 297
\pinlabel {$(A)$} at 522 156
\pinlabel {$(B)$} at 396 277
\pinlabel {$(A)$} at 280 166
\pinlabel {$(B)$} at 405 43
\pinlabel {\scriptsize{Projective transformation}} at 273 247
\endlabellist
\centering
\begin{changemargin}{-0.6cm}{0cm}
\includegraphics[width=14cm]{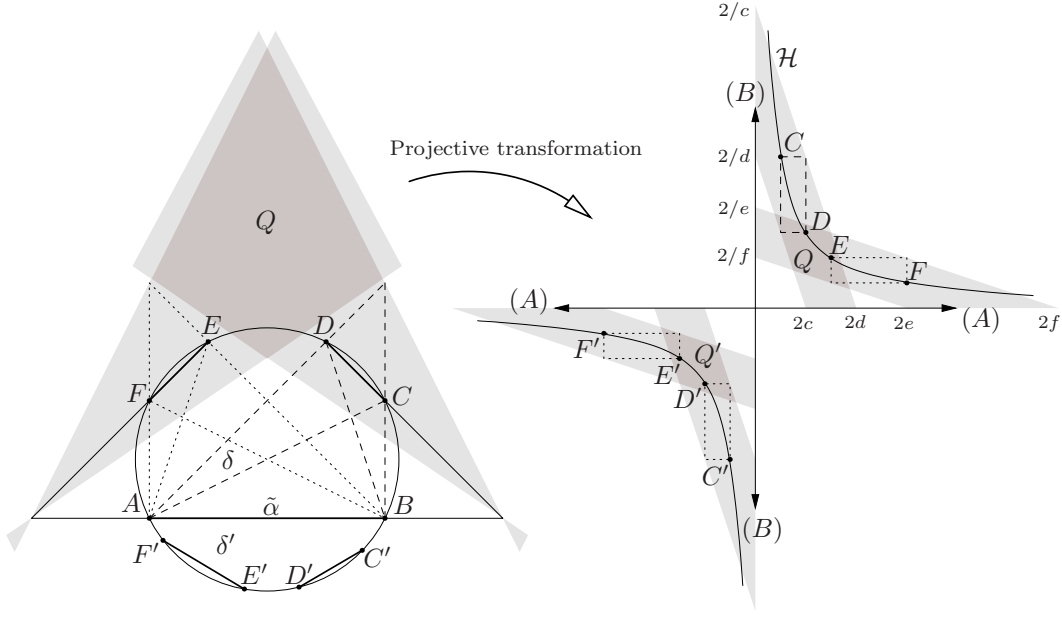}
\end{changemargin}
\caption{Left: view of $\PP(\RR^{2,1})$ and its subset $\partial_{\infty}\HH^2$ in the affine chart $\{ x_3=1\} $.
Right: view in another affine chart, obtained by slicing $\RR^{2,1}$ along a plane parallel to $\spa(A,B)$.}
\label{fig:projective} 
\end{figure}
For convenience, we refer to an edge by its two endpoints, so that \eg $\tilde{\alpha} = AB := \HH^2\cap\spa(A,B)$.
The fact that the longitudinal motion of $AB$ is at least that of $CD$ means that the image $[v]\in\PP(\RR^{2,1})$ of~$v$ lies in a bigon of $\PP(\RR^{2,1})$ bounded by two projective lines through the point $AB\cap CD$, namely the line through $AB\cap CD$ and $AC\cap BD$ and the line through $AB\cap CD$ and $AD\cap BC$.
Of the two regions of $\PP(\RR^{2,1})$ that these lines determine, the correct one is the one containing $CD$ (if $[v]$ is on $CD$ then the longitudinal motion of $CD$ is zero).
We refer to Figure~\ref{fig:projective} (left panel), in which the relevant region is shaded.
Similarly, the fact that the longitudinal motion of $AB$ is at least that of $EF$ means that $[v]$ lies in a region of $\PP(\RR^{2,1})$ bounded by two projective lines through the point $AB\cap EF$, namely the line through $AB\cap EF$ and $AE\cap BF$ and the line through $AB\cap EF$ and $AF \cap BE$.
These two conditions determine a quadrilateral $Q$ of $\PP(\RR^{2,1})$ to which $[v]$ must belong.
Similarly,  $[v']\in\PP(\RR^{2,1})$ must belong to another quadrilateral $Q'$ of $\PP(\RR^{2,1})$, corresponding to the fact that the longitudinal motion of $AB$ is at least that of $C'D'$ and of $E'F'$.

Consider the affine chart of $\PP(\RR^{2,1})$ obtained by slicing $\RR^{2,1}$ along the affine plane parallel to $\spa(A,B)$ passing through $v$ and $v'$; note that this plane contains the origin only if $v_{\ell}=v'_{\ell}=0$ and we have assumed this is not the case.
The corresponding projective transformation is shown in Figure~\ref{fig:projective}, right.
In this new affine chart the points $A$ and~$B$ are at infinity, the points $C,D,E,F$ lie in this order on one branch of a hyperbola $\mathcal{H}$ with asymptotes of directions $A$ (horizontal) and~$B$ (vertical), and the points $C',D',E',F'$ lie in this order on the other branch of~$\mathcal{H}$.
Consider the restriction of the $\RR^{2,1}$ metric to this affine plane.
The two asymptotes, which are lightlike, divide the plane into four quadrants: two of them timelike (namely those containing~$\mathcal{H}$) and two of them spacelike.
We claim that $Q$ and~$Q'$ lie in \emph{opposite}, \emph{timelike} quadrants; this will contradict the fact that the vector of relative motion $v'-v=\psi(\tilde{\alpha})$ is spacelike.
Indeed, by construction the quadrilateral $Q$ is the intersection of two infinite Euclidean strips: the first strip is the union of all translates, along the direction of the line $(CD)$, of the rectangle circumscribed to the segment $[CD]$ with sides parallel to the asymptotes; the second strip is the union of all translates, along the direction of the line $(EF)$, of the rectangle circumscribed to the segment $[EF]$.
Without loss of generality, we may assume that $C, D, E, F$ have respective coordinates $(c,1/c), (d,1/d), (e,1/e), (f,1/f)$ where $0<c<\nolinebreak d<e<\nolinebreak f<+\infty$.
Then the four boundary lines of the two Euclidean strips intersect the horizontal axis at distance $2c < 2d < 2e < 2f$ from the origin, and the vertical axis at distance $2/c > 2/d >  2/e > 2/f$ from the origin.
The quadrilateral~$Q$, which is the intersection of the two strips, therefore lies entirely in the timelike quadrant that contains the branch of $\mathcal{H}$ on which $C, D, E, F$ lie.
Similarly, $Q'$ lies entirely in the opposite quadrant.
Therefore the image of $\psi(AB)=v'-v$ in $\PP(\RR^{2,1})$ is timelike, a contradiction.

\subsection{Proof of Claim~\ref{claim:main}.(1)--(2)}\label{subsec:abcd}

The two hyperideal triangulations $\Delta$ and~$\Delta'$ have all but one arc in common.
Let $\alpha$ (\resp $\alpha'$) be the arc~of~$\Delta$ (\resp $\Delta'$) that is not an arc of $\Delta'$ (\resp $\Delta$).
By Claim~\ref{claim:main}.(0), the sets $\{ {\boldsymbol f}(\beta) \,|\, \beta\text{ an arc of }\Delta\}$ and $\{ {\boldsymbol f}(\beta) \,|\, \beta \text{ an arc of }\Delta'\}$ are both bases of $H^1_{\rho}(\Gamma,\g)$.
Therefore there is, up to scale, exactly one linear relation of the form
\begin{equation}\label{eqn:zerocross}
c_{\alpha}\,{\boldsymbol f}(\alpha) + c_{\alpha'}\,{\boldsymbol f}(\alpha') + \sum_{\substack{\beta\text{ arc of both}\\ \Delta\text{ and }\Delta'}} c_{\beta}\,{\boldsymbol f}(\beta) = 0 ,
\end{equation}
where $c_{\alpha},c_{\alpha'},c_{\beta}\in\RR$.
Claim~\ref{claim:main}.(1) is equivalent to the inequality $c_{\alpha} c_{\alpha'} > 0$.
Given the nondegeneracy guaranteed by Claim~\ref{claim:main}.(0), this inequality will hold in general if it holds for one particular choice of geodesic representatives, waists, and widths of the arcs of~$S$ (using Remark~\ref{rem:connected-choices}).
Therefore, it is sufficient to exhibit some choice of geodesic representatives, waists, and widths for which
\begin{equation}\label{eqn:requested-signs-w}
\left \{ \begin{array}{lcl}
  c_{\alpha} & > & 0,\\
  c_{\alpha'} & > & 0,\\
  c_{\beta} & \leq & 0 \quad \text{ for all other arcs }\beta\text{ of }\Delta\text{ and }\Delta'.
\end{array}\right .
\end{equation}
The last inequality will clearly imply Claim~\ref{claim:main}.(2).

The arcs $\alpha,\alpha'$ are the diagonals of a quadrilateral bounded by four arcs $\beta_1, \beta_2, \beta_3, \beta_4$, with $\alpha$ separating $\beta_1, \beta_2$ from $\beta_3, \beta_4$ and $\alpha'$ separating $\beta_2, \beta_3$ from $\beta_4, \beta_1$.
In the following, we show how to choose geodesic representatives, waists, and widths for the arcs of $\Delta$ and~$\Delta'$ so that \eqref{eqn:zerocross} becomes
\begin{equation} \label{eqn:specialzerocross}
\boldsymbol{f}(\alpha) + \boldsymbol{f}(\alpha') - \sum_{\beta\in\{ \beta_1,\beta_2,\beta_3,\beta_4\} } \boldsymbol{f}(\beta) = 0
\end{equation}
(which implies \eqref{eqn:requested-signs-w}).
In particular, all coefficients $c_{\beta}$ in \eqref{eqn:zerocross} vanish except for $\beta=\beta_i$.

Let $\underline{\smash{\beta}}_1,\ldots,\underline{\smash{\beta}}_4\subset S$ be any geodesic representatives of $\beta_1, \ldots, \beta_4$, respectively, and let $R$ be the hyperideal quadrilateral bounded by these four edges.
Let $\tilde{\beta}_1,\ldots,\tilde{\beta}_4\subset\HH^2$ be lifts of these edges bounding a lift $\widetilde{R}$ of~$R$.
The quadrilateral $\widetilde{R}$ is the intersection of $\HH^2$ with the cone spanned by four spacelike vectors $v_1, \ldots, v_4 \in \RR^{2,1}$, where we index the $v_i$ so that
$$\tilde{\beta}_i  = \HH^2 \cap (\RR_+ v_i + \RR_+ v_{i+1})$$
for all $1\leq i\leq 4$, with indices to be interpreted cyclically modulo~$4$ throughout the section (\ie $v_5=v_1$): see Figure~\ref{fig:claim3-1}.
\begin{figure}[ht!]
\labellist
\small\hair 2pt
\pinlabel {$[v_1]$} at 3 3
\pinlabel {$[v_2]$} at 115 3
\pinlabel $[v_3]$ at 115 115
\pinlabel {$[v_4]$} at 3 115
\pinlabel {\large $\delta_1$} at 65 30
\pinlabel {\large $\delta_2$} at 93 47
\pinlabel {\large $\delta_3$} at 52 88
\pinlabel {\large $\delta_4$} at 30 52
\pinlabel $\tilde{\beta}_1$ at 54 14
\pinlabel $\tilde{\beta}_2$ at 104 64
\pinlabel $\tilde{\beta}_3$ at 65 102
\pinlabel $\tilde{\beta}_4$ at 15 64
\pinlabel $e_1$ at 45 35
\pinlabel $e_2$ at 75 51
\pinlabel $e_3$ at 75 82
\pinlabel $e_4$ at 34 77
\pinlabel {$[v]$} at 59 51
\pinlabel {$\bullet$} at 59 58
\endlabellist
\centering
\includegraphics[width=2.4in]{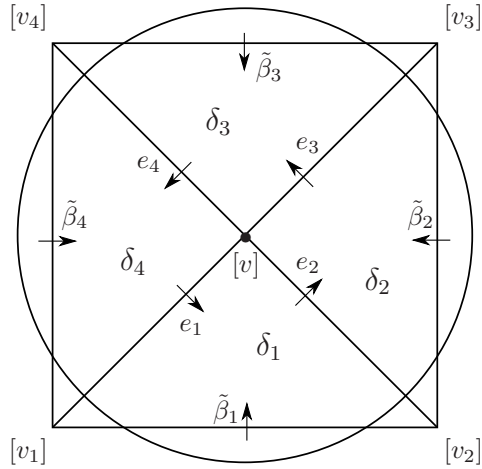}
\caption{View of $\PP(\RR^{2,1})$ in the affine chart $\{ x_3=1\} $. 
The quadrilateral $\widetilde{R}$ is divided into four small tiles $\delta_1, \delta_2, \delta_3, \delta_4$. The set of edges $\mathscr{E}''$ consists of $\mathscr{E}\cap\mathscr{E}'$ together with the four geodesic rays $e_i$ formed by dividing $\tilde{\alpha}$ and $\tilde{\alpha}'$ in half at their intersection point $\{ v\}$. Arrows show transverse orientations.}
\label{fig:claim3-1}
\end{figure}
We now choose the geodesic representatives $\underline{\alpha}$ and $\underline{\alpha}'$ so that their lifts $\tilde{\alpha}$ and $\tilde{\alpha}'$ inside $\widetilde{R}$ satisfy
$$\tilde{\alpha} = \HH^2 \cap (\RR_+ v_1 + \RR_+ v_3) \quad\quad\mathrm{and}\quad\quad \tilde{\alpha}' = \HH^2 \cap (\RR_+ v_2 + \RR_+ v_4) .$$
This configuration is achieved, for instance, if all the chosen geodesic representatives of the arcs of~$S$ are perpendicular to the boundary of the convex core: then $v_1, v_2, v_3, v_4$ are dual to the relevant boundary components of the preimage of the convex core in~$\HH^2$.

Let $\underline{\Delta},\underline{\Delta}'$ be geodesic hyperideal triangulations of~$S$ corresponding to $\Delta,\Delta'$, respectively, and containing our chosen geodesic representatives $\underline{\alpha}$, $\underline{\alpha}',\underline{\smash{\beta}}_i$ from above.
We now apply the formalism of Section~\ref{subsec:formalism-strips} to the smallest geodesic cellulation $\underline{\Delta}''$ refining $\underline{\Delta}$ and~$\underline{\Delta}'$.
The vertex set $\mathscr{V}''$ of $\underline{\Delta}''$ is just the one point $\{\underline{v}\}=\underline{\alpha}\cap\underline{\alpha}'$.
The set $\mathscr{T}''$ of polygons consists of the $2|\chi|-2$ hyperideal triangles in $\mathscr{T} \cap \mathscr{T}'$ and of four ``small'' (nonhyperideal) triangles arranged around~$\underline{v}$.
The set $\mathscr{E}''$ of edges consists of $\mathscr{E} \cap \mathscr{E}'$ together with the four geodesic rays formed by cutting $\underline{\alpha}$ and $\underline{\alpha}'$ in half at~$\underline{v}$.
Let $\widetilde{\mathscr{V}}'',\widetilde{\mathscr{E}}'',\widetilde{\mathscr{T}}''$ be the respective preimages in~$\HH^2$ of $\mathscr{V}'',\mathscr{E}'',\mathscr{T}''$.
There are four ``small'' tiles $\delta_1, \delta_2, \delta_3, \delta_4\in\widetilde{\mathscr{T}}''$ that partition~$\widetilde{R}$:
$$\delta_i := \HH^2 \cap (\RR_+v_i + \RR_+ v_{i+1} + \RR_+ v).$$
For any~$i$, the tile $\delta_i$ is bounded by the infinite edge $\tilde{\beta}_i$ together with the two half-infinite edges $e_i$ and~$e_{i+1}$, where
$$e_i := \HH^2 \cap (\RR_+ v_i + \RR_+ v)$$
(see Figure~\ref{fig:claim3-1}).
Note that $\tilde{\alpha} = e_1 \cup e_3$ and $\tilde{\alpha}' = e_2 \cup e_4$.

By multiplying the $v_i\in\RR^{2,1}$ by positive scalars, we may arrange that
\begin{equation}\label{eqn:norm} 
v_1 + v_3 = v_2 + v_4.
\end{equation}
Now, define
$$\varphi(\delta_i) := v_{i+1} - v_i$$
for all~$i$, and extend this to a $\rho$-equivariant (in the sense of \eqref{eqn:rho-equiv}) map $\varphi :\nolinebreak \widetilde{\mathscr{T}}''\to\g$, with value~$0$ outside the $\rho(\Gamma)$-orbits of the~$\delta_i$.
The corresponding $\rho$-equivariant map $\psi :\nolinebreak\pm\widetilde{\mathscr{E}}''\to\g$ describing the relative motion of the tiles, defined as in Section~\ref{subsec:ookkee}, satisfies $\mathscr{L}(\psi) = 0$ because $\varphi$ is $\rho$-equivariant.
By Observation~\ref{obs:realize-strips} and linearity of~$\mathscr{L}$, in order to establish \eqref{eqn:specialzerocross}, it is sufficient to see that for some appropriate choice of the strip waists and widths, we have
\begin{equation}\label{eqn:psi-good-linear-combin}
\psi = \psi_{\alpha} + \psi_{\alpha'} - \sum_{\beta\in\{ \beta_1,\beta_2,\beta_3,\beta_4\} } \psi_{\beta},
\end{equation}
where we interpret $\psi_{\alpha}, \psi_{\alpha'}, \psi_{\beta}$ (Definition~\ref{def:psi-alpha}) as elements of $\Psi(\pm\widetilde{\mathscr{E}}'',\g)$ as in Section~\ref{subsec:formalism-strips}.

We first assume that $\beta_1,\beta_2,\beta_3,\beta_4$ are pairwise distinct.
Endow each $\tilde{\beta}_i$ with the transverse orientation placing $\delta_i$ on the positive side; this makes $\tilde{\beta}_i$ into an element of $\pm\widetilde{\mathscr{E}}''$.
Then
$$\psi(\tilde{\beta}_i) = \varphi(\delta_i) - 0 = v_{i+1} - v_i$$
is, by Lemma~\ref{lem:inf-transl-in-R21}.(3'), an infinitesimal translation along a geodesic of~$\HH^2$ orthogonal to~$\tilde{\beta}_i$ at a point~$\tilde{p}_i$; the translation direction is negative with respect to the transverse orientation.
We choose the waist $p_{\beta_i}\in\underline{\smash{\beta}}_i$ to be the projection to $S=\rho(\Gamma)\backslash\HH^2$ of~$\tilde{p}_i$ and the width $m_{\beta_i} = \sqrt{\langle\psi(\tilde{\beta}_i),\psi(\tilde{\beta}_i)\rangle}$ to be the velocity of the infinitesimal translation $\psi(\tilde{\beta}_i)$.
Then
$$\psi(\tilde{\beta}_i) = -\psi_{\beta_i}(\tilde{\beta}_i)$$
by definition of~$\psi_{\beta_i}$.
Next, we transversely orient the ray $e_i$ from $\delta_{i-1}$ to~$\delta_i$ (see Figure~\ref{fig:claim3-1}).
By \eqref{eqn:norm},
$$\psi(e_i) = \varphi(\delta_i) - \varphi(\delta_{i-1}) = v_{i+1} - 2 v_i + v_{i-1} = v_{i+2} - v_i .$$
By Lemma~\ref{lem:inf-transl-in-R21}.(3'), this implies that $\psi(e_i)$ is an infinitesimal translation along a geodesic of~$\HH^2$ orthogonal to~$e_i$ at some point~$\tilde{q}_i$; the direction of translation is positive with respect to the transverse orientation.
Note that $\psi(e_i) = \psi(-e_{i+2})$, hence $\tilde{q}_i = \tilde{q}_{i+2}$.
We choose the waist $p_{\alpha}\in\underline{\alpha}$ to be the projection to $S$ of $\tilde{q}_1 = \tilde{q}_3$, and the width $m_{\alpha}>0$ to be the velocity of the infinitesimal translation $\psi(\pm e_1) = \psi(\mp e_3)$.
Similarly, we choose the waist and width for $\alpha'$ to be defined by $\psi(\pm e_2) = \psi(\mp e_4)$.
Then
\begin{align*}
\psi(e_1) &= \psi_{\alpha}(e_1) , & \psi(e_2) &= \psi_{\alpha'}(e_2) ,\\
\psi(e_3) &= \psi_{\alpha}(e_3) , & \psi(e_4) &= \psi_{\alpha'}(e_4) .
\end{align*} 
Since $\psi,\psi_{\alpha},\psi_{\alpha'},\psi_{\beta_i}$ all take value~$0$ outside the $\rho(\Gamma)$-orbits of the $\tilde{\beta}_i$ and~$e_i$, we conclude that \eqref{eqn:psi-good-linear-combin} holds.
This establishes \eqref{eqn:specialzerocross}, hence \eqref{eqn:requested-signs-w}, hence Claim~\ref{claim:main}.(1)--(2), in the case that $\beta_1,\beta_2,\beta_3,\beta_4$ are pairwise distinct.

In the case that some of the $\beta_i$ are equal, we still define $\varphi$ as above.
For $1\leq i\leq 4$, if $\beta_i$ is not equal to any other~$\beta_j$, then we choose the waist $p_{\beta_i}$ and the width $m_{\beta_i}$ as above.
If $\beta_i=\beta_j$ for some $1\leq i<j\leq 4$, then $\rho(\gamma)\cdot\tilde{\beta}_i = -\tilde{\beta}_j$ for some $\gamma\in\Gamma$, and
$$\psi(-\tilde{\beta}_j) =  \varphi(\rho(\gamma) \cdot \delta_i) - \varphi(\delta_j) =  \Ad(\rho(\gamma))\,\varphi(\delta_i) - \varphi(\delta_j)$$
is the sum of two infinitesimal translations orthogonal to~$\tilde{\beta}_j$, both positive with respect to the transverse orientation of~$\tilde{\beta}_j$.
Therefore, using Lemma~\ref{lem:inf-transl-in-R21}.(3'), we see that $\psi(-\tilde{\beta}_j)$ is again a positive infinitesimal translation orthogonal to~$\tilde{\beta}_j$.
We choose $\psi_{\beta_i}=\psi_{\beta_j}$ to have waist and width defined by $\psi(-\tilde{\beta}_j)$, so that $\psi_{\beta_j}(\tilde{\beta}_j)=-\psi(\tilde{\beta}_j)$.
Then \eqref{eqn:psi-good-linear-combin} holds as above.
This completes the proof of Claim~\ref{claim:main}.(1)--(2).

Proposition~\ref{prop:mainsteps} is proved, as well as Theorems \ref{thm:main} and~\ref{thm:main-macro}.

\section{Examples}\label{sec:ex}

\begin{figure}[ht!]
\labellist
\small\hair 2pt
\pinlabel {\bf (a)} [b] at 47 -18
\pinlabel {\bf (b)} [b] at 145 -18
\pinlabel {\bf (c)} [b] at 249 -18
\pinlabel {\bf (d)} [b] at 357 -18
\pinlabel {${}_1$} at 6 107
\pinlabel {${}_1$} at 67 34
\pinlabel {${}_1$} at 106 109
\pinlabel {${}_1$} at 115 33
\pinlabel {${}_1$} at 215 96
\pinlabel {${}_1$} at 209 63
\pinlabel {${}_1$} at 326 94
\pinlabel {${}_1$} at 333 28
\pinlabel {${}_2$} at 50 83
\pinlabel {${}_2$} at 46 54
\pinlabel {${}_2$} at 147 94
\pinlabel {${}_2$} at 145 56
\pinlabel {${}_2$} at 244 103
\pinlabel {${}_2$} at 245 14
\pinlabel {${}_2$} at 355 97
\pinlabel {${}_2$} at 357 65
\pinlabel {${}_3$} at 144 84
\pinlabel {${}_3$} at 113 63
\pinlabel {${}_3$} at 253 89
\pinlabel {${}_3$} at 223 71
\pinlabel {${}_3$} at 380 94
\pinlabel {${}_3$} at 380 28
\endlabellist
\centering
\includegraphics[width=12cm]{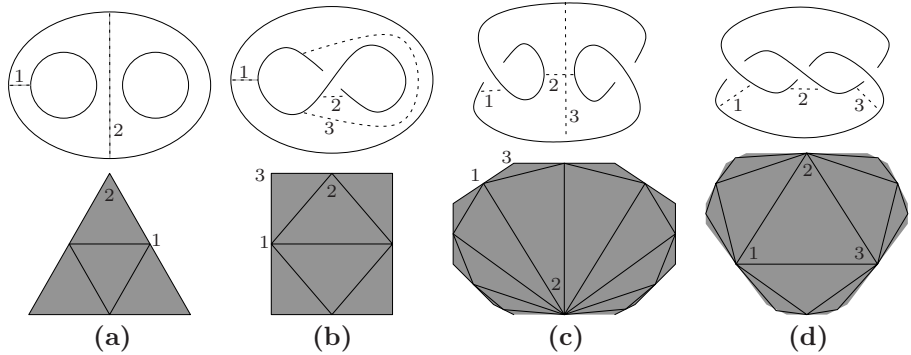}
\vspace{8pt}
\caption{Four surfaces of small complexity (top) and their arc complexes, mapped under $f$ to the closure of $\adm(\rho)$ in an affine chart of $\PP(H^1_{\rho}(\Gamma,\g))$ (bottom). Some arcs are labeled by Arab numerals.}
\label{fig:small} 
\end{figure}

Four noncompact surfaces (two of them orientable) have a 2-dimensional arc complex~$\overline{X}$.
They are represented in Figure~\ref{fig:small}.
Here we summarize some elementary facts about~ $\overline{X}$, and how $\overline{X}$ relates to the geometry of $\adm(\rho)$ when $\rho$ is the holonomy representation of a convex cocompact hyperbolic structure on the surface. 
Margulis spacetimes whose associated hyperbolic surface has one of these four topological types were studied by Charette--Drumm--Goldman: in \cite{cdg10,cdg13}, they gave a similar tiling of $\adm(\rho)$ according to which isotopy classes of crooked planes embed disjointly in the Margulis spacetime. 

\smallskip

\noindent {\bf (a)} \textbf{Thrice-holed sphere}:
The arc complex $\overline{X}$ has $6$ vertices, $9$ edges, $4$ faces.
Its image $f(\overline{X})$ is a triangle whose sides stand in natural bijection with the three boundary components of the convex core of $S=\rho(\Gamma)\backslash\HH^2$: an infinitesimal deformation $u$ of~$\rho$ lies in a side of the triangle if and only if it fixes the length of the corresponding boundary component, to first order.
The set $\adm(\rho)=f(X)$ is the interior of the triangle.

\smallskip

\noindent {\bf (b)} \textbf{Twice-holed projective plane}:
The arc complex $\overline{X}$ has $8$ vertices, $13$ edges, $6$ faces.
Its image $f(\overline{X})$ is a quadrilateral.
The horizontal sides of the quadrilateral correspond to infinitesimal deformations $u$ that fix the length of a boundary component.
The vertical sides correspond to infinitesimal deformations that fix the length of one of the two simple closed curves running through the half-twist.
The set $\adm(\rho)=f(X)$ is the interior of the quadrilateral.

\smallskip

\noindent {\bf (c)} \textbf{Once-holed Klein bottle}:
The arc complex $\overline{X}$ is infinite, with one vertex of infinite degree and all other vertices of degree either $2$ or~$5$.
The closure of $f(\overline{X})$ is an infinite-sided polygon with sides indexed in $\ZZ\cup\{\infty\}$.
The exceptional side has only one point in $f(\overline{X})$, and corresponds to infinitesimal deformations that fix the length of the only nonperipheral, two-sided simple closed curve $\gamma$, which goes through the two half-twists.
The group $\ZZ$ naturally acts on the arc complex $\overline{X}$, via Dehn twists along~$\gamma$.
All nonexceptional sides are contained in $f(\overline{X})$ and correspond to infinitesimal deformations that fix the length of some curve, all these curves being related by some power of the Dehn twist along~$\gamma$.
The set $\adm(\rho)=f(X)$ is the interior of the polygon.

\smallskip

\noindent {\bf (d)} \textbf{Once-holed torus}:
The arc complex $\overline{X}$ is infinite, with all vertices of infinite degree; it is known as the \emph{Farey triangulation}.
The arcs are parameterized by $\PP^1(\QQ)$.
The closure of $f(\overline{X})$ contains infinitely many segments in its boundary. 
These segments, also indexed by $\PP^1(\QQ)$, are in natural correpondence with the simple closed curves.
Only one point of each side belongs to $f(\overline{X})$: namely, the strip deformation along a single arc, which lengthens all curves except the one curve disjoint from that arc.
The group $\GL_2(\ZZ)$ acts on~$\overline{X}$, transitively on the vertices, via the mapping class group of the once-holed torus.
We refer to \cite{glmm} or \cite{PT-note} for more details about $\adm(\rho)$ and its closure in this case.

\begin{remark}
In Examples {\bf (c)} and~{\bf (d)}, where the surface has only one boundary loop $\underline{\gamma}$, the closure of $f(\overline{X})$ in $\PP(H^1_{\rho}(\Gamma,\g))$ does \emph{not} meet the projective line corresponding to infinitesimal deformations that fix the length of~$\underline{\gamma}$.
This is implied by Proposition~\ref{prop:unit-peripheral}.
\end{remark}

\section{Fundamental domains in Minkowski $3$-space}\label{sec:crooked-planes}

In this section, we deduce Theorem~\ref{thm:crooked-planes} (the Crooked Plane Conjecture, assuming convex cocompact linear holonomy) from Theorem~\ref{thm:main} (the parameterization by the arc complex of Margulis spacetimes with fixed convex cocompact linear holonomy).
To begin, we review the construction of crooked planes in Minkowski space, originally due to Drumm~\cite{dru92}.

\subsection{Crooked planes in~$\RR^{2,1}$}\label{subsec:crooked-R21}

A \emph{crooked plane} in~$\RR^{2,1}$, as defined in~\cite{dru92}, is the union of
\begin{itemize}
  \item a \emph{stem}, which is the union of all causal (\ie timelike or lightlike) lines of a given timelike plane that pass through a given point, called the \emph{center};
  \item two \emph{wings}, which are two disjoint open lightlike half-planes whose respective boundaries are the two (lightlike) boundary lines of the~stem.
\end{itemize}
Let us fix some notation.
We see $\HH^2$ as a hyperboloid in~$\RR^{2,1}$ as in \eqref{eqn:H2subsetR21}.
For any future-pointing lightlike vector $v_0\in\RR^{2,1}$, we denote by $\mathcal{W}(v_0)$ the \emph{left wing associated with~$v_0$}: by definition, this is the connected component of $v_0^{\perp}\smallsetminus\RR v_0$ consisting of (spacelike) vectors $w$ that lie ``to the left of~$v_0$ seen from~$\HH^2$'', \ie such that $(v,v_0,w)$ is positively oriented for any $v\in\HH^2 \subset \RR^{2,1}$.
For any geodesic line $\ell$ of~$\HH^2$, with endpoints in $\partial_{\infty}\HH^2\subset\PP(\RR^{2,1})$ represented by future-pointing lightlike vectors $v^+,v^-\in\nolinebreak\RR^{2,1}$, we denote by $\CP(\ell)$ the \emph{left crooked plane centered at $0\in\RR^{2,1}$ associated with~$\ell$}: by definition, this is the union of the stem
$$\mathcal{S}(\ell) := \{ w \in \spa(\ell) \subset \RR^{2,1} ~|~ \langle w, w\rangle \leq 0\} $$
and of the wings $\mathcal{W}(v^+)$ and~$\mathcal{W}(v^-)$ (see Figure~\ref{fig:1CP}).
\begin{figure}[ht!]
\centering
\labellist
\small\hair 2pt
\pinlabel $\mathcal{S}(\ell)$ at 85 25
\pinlabel $\mathcal{W}(v^+)$  at 120 60
\pinlabel $\mathcal{W}(v^-)$ at 50 63
\pinlabel $v^+$  at 78 100
\pinlabel $v^-$ at 108 119
\endlabellist
\includegraphics[scale=1]{1CP}
\caption{The left crooked plane $\CP(\ell)$ in~$\RR^{2,1}$}
\label{fig:1CP}
\end{figure}
A general \emph{left crooked plane} is just a translate $\CP(\ell) + v$ of such a set~$\CP(\ell)$ by some vector $v \in \RR^{2,1}$.
The images of left crooked planes under the orientation-reversing linear map $w\mapsto -w$ are called \emph{right crooked planes}; we will not work directly with them here.

Thinking of $\RR^{2,1}\simeq\g$ as the set of Killing vector fields on~$\HH^2$ as in Section~\ref{subsec:Killing-fields} and using Lemma~\ref{lem:inf-transl-in-R21}, we can describe $\CP(\ell)$ as follows:
\begin{itemize}
  \item the interior of the stem $\mathcal{S}(\ell)$ is the set of elliptic Killing fields on~$\HH^2$ whose fixed point belongs to~$\ell$;
  \item the lightlike line $\RR v^+$, in the boundary of the stem $\mathcal{S}(\ell)$, is $\{ 0\}$ union the set of parabolic Killing fields with fixed point $[v^+]\in\partial_{\infty}\HH^2$, and similarly for~$v^-$;
  \item the wing $\mathcal{W}(v^+)$ is the set of hyperbolic Killing fields with attracting fixed point $[v^+]\in\partial_{\infty}\HH^2$, and similarly for~$v^-$.
\end{itemize}
In other words, $\CP(\ell)\smallsetminus\{ 0\}$ is the set of Killing fields on~$\HH^2$ with a nonrepelling fixed point in~$\overline{\ell}$, where $\overline{\ell}$ is the closure of $\ell$ in $\HH^2\cup\partial_{\infty}\HH^2$.

Any crooked plane divides $\RR^{2,1}$ into two connected components.
Given a transverse orientation of~$\ell$, the \emph{positive crooked half-space} $\CHS^+(\ell)$ (\resp the \emph{negative crooked half-space} $\CHS^-(\ell)$) is the connected component of $\RR^{2,1}\smallsetminus\CP(\ell)$ consisting of nonzero Killing fields on~$\HH^2$ with a nonrepelling fixed point in $(\HH^2\cup\partial_{\infty}\HH^2)\smallsetminus\overline{\ell}$ lying on the positive (\resp negative) side of~$\overline{\ell}$.

\subsection{Disjointness of crooked half-spaces in~$\RR^{2,1}$}

In order to build fundamental domains in~$\RR^{2,1}$ for proper actions of free groups, it is important to understand when two crooked planes are disjoint.
A complete disjointness criterion for crooked planes was given by Drumm--Goldman in \cite{dg99}.
More recently, the geometry of crooked planes and crooked half-spaces was studied in \cite{bcdg13}.
We now recall a sufficient condition due to Drumm.

Let $\ell$ be a transversely oriented geodesic line of~$\HH^2$ and let $v^+,v^-\in\RR^{2,1}$ be future-pointing lightlike vectors representing the endpoints of $\ell$ in $\partial_{\infty}\HH^2$, with $[v^+]$ lying to the left for the transverse orientation.
We shall use the following terminology.

\begin{definition}\label{def:stem-quadrant}
The open cone $\SQ(\ell) := \RR^*_+ v^+ - \RR^*_+ v^-$ of $\mathrm{span}(v^-,v^+)=\mathrm{span}(\ell)$ is called the \emph{stem quadrant} of the transversely oriented geodesic~$\ell$.
\end{definition}

By Lemma~\ref{lem:inf-transl-in-R21}.(3'), the stem quadrant $\SQ(\ell)$ consists of all infinitesimal translations of~$\HH^2$ whose axis is orthogonal to~$\ell$ and oriented in the positive direction. 
The following sufficient condition for disjointness of crooked planes was first proved by Drumm:

\begin{proposition}[Drumm \cite{dru92}]\label{prop:disjoint-CP-R21}
Let $\ell,\ell'$ be two disjoint geodesics of~$\HH^2$, trans\-versely oriented away from each other.\,For any $v\in\mathrm{SQ}(\ell)$ and $v'\in\nolinebreak\mathrm{SQ}(\ell')$,
$$\overline{\CHS^+(\ell)} + v \subset \CHS^-(\ell') + v' ;$$
in particular, the crooked planes $\CP(\ell) + v$ and $\CP(\ell') + v'$ are disjoint.
\end{proposition}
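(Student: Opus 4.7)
The disjointness of the two crooked planes (the ``in particular'' clause) is immediate from the asserted inclusion of half-spaces, since $\CP(\ell)\subset\overline{\CHS^+(\ell)}$ while $\CP(\ell')$ is disjoint from the open set $\CHS^-(\ell')$; so I focus on the inclusion itself. By translation equivariance it is equivalent to show $\overline{\CHS^+(\ell)} + (v-v') \subset \CHS^-(\ell')$. I fix future-pointing lightlike representatives $v^\pm_\ell, v^\pm_{\ell'}$ of the endpoints of $\ell, \ell'$ with $[v^+_\ell]$ and $[v^+_{\ell'}]$ on the respective positive sides. The hypothesis that $\ell, \ell'$ are disjoint and transversely oriented away from each other then translates, via the identifications of Section~\ref{subsec:crooked-R21}, into the fact that the points $[v^\pm_{\ell'}]$ lie strictly on the negative side of $\ell$ in $\partial_\infty\HH^2$, and the points $[v^\pm_\ell]$ strictly on the negative side of $\ell'$.

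The first concrete input is Lemma~\ref{lem:inf-transl-in-R21}(3'), which identifies the stem quadrant $\SQ(\ell)$ with the set of hyperbolic Killing fields translating along an axis orthogonal to $\ell$ in the positive direction. Any such Killing field has attracting fixed point strictly on the positive side of $\ell$, hence, by the previous paragraph, on the negative side of $\ell'$; this gives $\SQ(\ell) \subset \CHS^-(\ell')$. A symmetric analysis applied to $-v'$ (whose attracting fixed point is the repelling fixed point of $v' \in \SQ(\ell')$, sitting on the negative side of $\ell'$) shows $-v' \in \CHS^-(\ell')$. Crooked half-spaces are not convex, so these two observations do not yet yield $v - v' \in \CHS^-(\ell')$, but they furnish the base case around which the rest of the argument is organized.

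The heart of the proof is to propagate the base case from $X = 0$ to every $X \in \overline{\CHS^+(\ell)}$. My plan is to describe $\CHS^-(\ell')$ via explicit linear inequalities in the Lorentzian pairings with $v^\pm_{\ell'}$ (together with a choice of unit spacelike normal to $\mathrm{span}(\ell')$ fixing the side), obtained by case analysis on the causal type of the Killing field. For $X \in \overline{\CHS^+(\ell)}$ the analogous weak inequalities hold relative to $\ell$; and because $[v^\pm_{\ell'}]$ lies on the negative side of $\ell$, these weak $\ell$-inequalities for $X$ translate into weak inequalities of the correct sign when restated as $\ell'$-inequalities. Adding the strict contribution from $v-v'$ -- whose sign follows from the positive-coefficient decompositions $v = av^+_\ell - bv^-_\ell$ and $v' = cv^+_{\ell'} - dv^-_{\ell'}$, combined with the strictly negative pairings $\langle v^\pm_\ell, v^\pm_{\ell'}\rangle$ coming from the distinctness of the endpoints -- then yields overall strict inequalities and places $X + v - v'$ in $\CHS^-(\ell')$.

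The hardest case is when $X$ lies on the boundary $\CP(\ell)$ -- for instance in the stem $\mathcal{S}(\ell)$ or on a wing edge $\RR v^\pm_\ell$ -- because there the weak $X$-inequalities are saturated and all strictness must come from $v - v'$. Handling this requires a careful bookkeeping of the signs of the four pairings $\langle v^\pm_\ell, v^\pm_{\ell'}\rangle$, combined with a secondary case distinction on the causal type of $X + v - v'$, in order to identify which defining inequality of $\CHS^-(\ell')$ supplies the decisive strict sign. This sign-tracking, made possible by the openness of the stem-quadrant conditions on $v$ and $v'$, is the principal computational task in the proof.
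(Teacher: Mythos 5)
Your reduction of the ``in particular'' clause and of the inclusion to the single statement $\overline{\CHS^+(\ell)} + (v-v') \subset \CHS^-(\ell')$ is correct, as is the observation that $\SQ(\ell)\subset\CHS^-(\ell')$, $-v'\in\CHS^-(\ell')$, and (tacitly) that $\overline{\CHS^+(\ell)}\smallsetminus\{0\}\subset\CHS^-(\ell')$. But the heart of the argument is only a plan, and it contains a genuine gap. You propose to express $\CHS^-(\ell')$ by ``linear inequalities in the Lorentzian pairings'' with $v^\pm_{\ell'}$ and a spacelike normal, conditioned on causal type; however this characterization is not established, and for spacelike $X$ it is not linear at all: whether $X$ lies in $\CHS^-(\ell')$ is governed by the attracting lightlike direction $X^+\in X^\perp$, which depends nonlinearly on $X$. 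Moreover the promised ``decisive strict sign'' from $v-v'$ does not come out definite under the bookkeeping you describe: with $v=av^+_\ell-bv^-_\ell$ and $v'=cv^+_{\ell'}-dv^-_{\ell'}$ ($a,b,c,d>0$) and all pairings $\langle v^\pm_\ell,v^\pm_{\ell'}\rangle<0$, a quantity like $\langle v-v',\,v^+_{\ell'}\rangle = a\langle v^+_\ell,v^+_{\ell'}\rangle - b\langle v^-_\ell,v^+_{\ell'}\rangle + d\langle v^-_{\ell'},v^+_{\ell'}\rangle$ has terms of mixed sign, so no strict inequality is forced. In short, the sign-tracking that you identify as the principal task is precisely what is missing, and the further ``secondary case distinction on the causal type of $X+v-v'$'' is deferred rather than resolved.

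The paper's proof proceeds quite differently and avoids all of this. It reduces Proposition~\ref{prop:disjoint-CP-R21} to Lemma~\ref{lem:parallel-CP-R21} (``$\overline{\CHS^+(\ell)}+v\subset\overline{\CHS^+(\ell)}\smallsetminus\{0\}$''), applied to $(\ell,v)$ and to $(\ell',v')$, together with the trivial nesting $\overline{\CHS^+(\ell)}\subset\CHS^-(\ell')\cup\{0\}$. The lemma itself is proved by a single separating-functional argument: for any $V$ with nonrepelling fixed point on the positive side of $\ell$ and any competitor $V'$ with nonrepelling fixed point on the negative side (or $V'=0$), one picks the geodesic $\alpha$ through those two fixed points and uses the linear functional ``component along $\alpha$'' (constant along $\alpha$ for Killing fields). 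Then $V$ has nonnegative component, $V'$ nonpositive, and $v\in\SQ(\ell)$ strictly positive, so $V+v\neq V'$. This treats all causal types at once and requires no case analysis; it is the idea your sketch would need to replace the undeveloped ``sign-tracking.'' I encourage you to compare Lemma~\ref{lem:parallel-CP-R21} and its proof: that is where the real content lies, and once you have it the Proposition follows in two lines.
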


Conversely, for $w\in\RR^{2,1}$, we have $\CP(\ell)\cap (\CP(\ell')+w)=\emptyset$ if and only if $w\in \SQ(\ell')- \SQ(\ell)$ \cite{dg99,bcdg13}.
Thus the space of directions in which one can translate $\CP(\ell')$ to make it disjoint from $\CP(\ell)$ is a convex open cone of~$\RR^{2,1}$ with a quadrilateral basis.

It is clear from the definitions in terms of nonrepelling fixed points of Killing fields that $\overline{\CHS^+(\ell)} \subset \CHS^-(\ell') \cup \{ 0\}$.
Therefore Proposition~\ref{prop:disjoint-CP-R21} is a consequence of the following lemma, applied to $(\ell,v)$ and $(\ell',v')$.

\begin{lemma}\label{lem:parallel-CP-R21}
For any transversely oriented geodesic $\ell$ of~$\HH^2$ and any $v\in\nolinebreak\mathrm{SQ}(\ell)$,
$$\overline{\CHS^+(\ell)} + v \subset \overline{\CHS^+(\ell)} \smallsetminus \{ 0\} .$$
\end{lemma}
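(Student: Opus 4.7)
By Section~\ref{subsec:crooked-R21} and Lemma~\ref{lem:inf-transl-in-R21}, I would identify $\RR^{2,1}\setminus\{0\}$ with the set of nonzero Killing vector fields on $\HH^2$, and characterize $\overline{\CHS^+(\ell)}\setminus\{0\}$ as the set of such fields whose \emph{nonrepelling} fixed point in $\HH^2\cup\partial_\infty\HH^2$ lies in $\overline{H^+}$, where $H^+$ is the open half of $\HH^2\cup\partial_\infty\HH^2$ lying on the positive side of $\bar\ell$ (with respect to the transverse orientation). In this language, the lemma asserts that for any $X\in\overline{\CHS^+(\ell)}$ and any $v\in\SQ(\ell)$, the Killing field $X+v$ is nonzero and its nonrepelling fixed point lies in $\overline{H^+}$.

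The first step is to dispose of the easy cases and non-vanishing. By Definition~\ref{def:stem-quadrant} and Lemma~\ref{lem:inf-transl-in-R21}.(3'), an element $v=\alpha v^+-\beta v^-$ of $\SQ(\ell)$ (with $\alpha,\beta>0$) is a hyperbolic Killing field translating along an axis orthogonal to $\ell$ in the direction of the transverse orientation, and a direct computation in $V^\perp$ places both of its fixed points at infinity on the positive side of $\bar\ell$. Hence $v\in\CHS^+(\ell)$, which handles the case $X=0$; symmetrically $-v\in\CHS^-(\ell)$, so $X+v=0$ is impossible when $X\in\overline{\CHS^+(\ell)}$.

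For the main step, I would consider the continuous path $t\mapsto X+tv$ for $t\in[0,1]$ and argue that its nonrepelling fixed point cannot leave $\overline{H^+}$. If it did, by continuity it would have to cross $\bar\ell$ at some $t_0\in(0,1]$, meaning $X+t_0v\in\CP(\ell)$; the point is then to show that the instantaneous motion of the fixed point at any such crossing points into $H^+$, not out of it. I would verify this by case analysis on the stratum of $\CP(\ell)$ in which $X+t_0 v$ lies: on the stem $\mathcal{S}(\ell)$, where $X+t_0v\in\spa(\ell)$ is causal, one writes $X+t_0v=av^++bv^-$ with $ab\ge 0$ and observes that a small further increment $\epsilon v$ produces $(a+\epsilon\alpha)v^++(b-\epsilon\beta)v^-\in \mathcal{S}(\ell)\cup\SQ(\ell)\subset\overline{\CHS^+(\ell)}$; on a wing $\mathcal{W}(v^\pm)$, where $X+t_0v$ is hyperbolic with attracting fixed point $[v^\pm]\in\bar\ell$, a first-order projective calculation of the attracting fixed point of $(X+t_0v)+\epsilon v$, analogous to the one following Definition~\ref{def:stem-quadrant}, shows that this fixed point is pushed into $H^+\cap\partial_\infty\HH^2$, since the transverse component of $v$ is positive by definition of $\SQ(\ell)$; the two null-ray cases (parabolic at $[v^\pm]$) follow from the same computation in a limiting form.

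The main obstacle is the wing case of Step 3: tracking the first-order motion of the attracting fixed point of a hyperbolic Killing field under addition of $v$. The stem case reduces to elementary sign considerations in the two-dimensional Lorentzian plane $\spa(\ell)$, in which $\SQ(\ell)$ is exactly the spacelike quadrant ``above'' the future stem. The wing case, by contrast, requires working projectively with $V^\perp$ as $V=X+t_0v$ is perturbed, and the conclusion relies crucially on the orientation ingredient built into the definition of $\SQ(\ell)$ via Lemma~\ref{lem:inf-transl-in-R21}.(3').
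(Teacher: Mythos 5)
Your approach is genuinely different from the paper's. The paper argues by producing, for each candidate violation (a nonzero Killing field $V'$ with nonrepelling fixed point in $L^-$), a single separating linear functional: the ``component along $\alpha$,'' where $\alpha$ is the geodesic joining the nonrepelling fixed points of $V'$ and $V=X$, oriented from the former to the latter. One then observes that $V$ has nonnegative component, $V'$ nonpositive, and $v$ strictly positive, hence $V+v\notin\{0,V'\}$. Your argument instead follows the path $t\mapsto X+tv$ and runs an open-closed barrier argument on the nonrepelling fixed point of $X+tv$, ruling out a crossing of $\overline\ell$ into $L^-$ by a local perturbative analysis at each stratum of $\CP(\ell)$. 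The paper's route buys a one-shot proof with no continuity of the fixed-point map, no stratum-by-stratum case analysis, and no first-order perturbation theory; your route is more dynamical and visual but carries more bookkeeping (including the need to verify that the nonrepelling-fixed-point map is continuous on $\RR^{2,1}\smallsetminus\{0\}$, which you implicitly assume).

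You also leave a substantive gap, which you flag yourself: the wing case is the crux, and you defer the ``first-order projective calculation.'' To spell out what remains: for $Y=X+t_0 v\in\mathcal W(v^+)$, write the perturbed attracting fixed direction as $v^++\epsilon w_1+O(\epsilon^2)$. Lightlike-ness at first order forces $w_1\in v^{+\perp}=\spa(v^+,Y)$, and orthogonality to $Y+\epsilon v$ at first order forces $\langle w_1,Y\rangle=-\langle v^+,v\rangle=\beta\langle v^+,v^-\rangle<0$, so $w_1\equiv cY\ (\mathrm{mod}\ v^+)$ with $c<0$ and $c\langle Y,Y\rangle$ explicitly computable. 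This does show the first-order displacement is nonzero and in the $-Y$ direction at $[v^+]$, but one still has to check, using the orientation built into $\mathcal W(v^+)$ and the side of $\ell$ singled out by $\SQ(\ell)$, that $-Y$ points into $L^+$ rather than $L^-$. That sign-chasing is precisely where the definition of $\SQ(\ell)$ does its work, and it is not carried out in your proposal. Two minor points: your stem case, reducing to signs of $a,b$ in $\spa(\ell)$, is correct; and the null-ray cases are already contained in the stem case (since $\RR v^\pm\subset\spa(\ell)$ with $ab=0$), so the reference to a ``limiting form'' of the wing computation there is superfluous.
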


\begin{proof}
Let $L^+$ be the closure of the connected component of $(\HH^2\cup\partial_{\infty}\HH^2)\smallsetminus\overline{\ell}$ lying on the positive side of~$\overline{\ell}$ for the transverse orientation, and let $L^-$ be its complement in $\HH^2\cup\partial_{\infty}\HH^2$.
Consider a nonzero Killing vector field $V\in\RR^{2,1}$ (\resp $V'\in\RR^{2,1}$) on~$\HH^2$ with a nonrepelling fixed point in $L^+$ (\resp $L^-$).
The lemma says that if $v\in\RR^{2,1}$ is a hyperbolic Killing field with translation axis orthogonal to~$\ell$, oriented towards $L^+$, then $V+v\notin\{ 0, V'\}$.

Let $\alpha$ be the geodesic line of~$\HH^2$ whose closure contains the nonrepelling fixed points of $V'$ and of~$V$; orient it from the former to the latter.
For $p\in\alpha$, let $\mathrm{pr}_{\alpha} :\nolinebreak T_p\HH^2\rightarrow\nolinebreak\RR$ be the linear form giving the signed length of the projection to~$\alpha$.
By definition of a Killing vector field, for any $Y\in\nolinebreak\RR^{2,1}$ the function $p\mapsto\mathrm{pr}_{\alpha}(Y(p))$ is constant on~$\alpha$; we call its value the \emph{component of $Y$ along~$\alpha$}.
The Killing field $V$ (\resp $V'$) has nonnegative (\resp nonpositive) component along~$\alpha$, because $\alpha$ is oriented towards (\resp away from) the nonrepelling fixed point of $V$ (\resp $V'$).
On the other hand, $v$ has positive component along~$\alpha$: indeed, if the oriented translation axis $\beta$ of~$v$ does not meet~$\alpha$, then this component is $\sqrt{\langle v,v\rangle}\,\cosh d(\alpha,\beta)>0$; otherwise, $\beta$ meets~$\alpha$ at an angle $\theta\in [0,\pi/2)$ and the component is $\sqrt{\langle v,v\rangle}\,\cos\theta > \nolinebreak 0$.
Thus $V+v$ has positive component along~$\alpha$, while $V'$ has nonpositive component, which implies $V+v\notin\{ 0, V'\}$.
\end{proof}

\subsection{Drumm's strategy}\label{subsec:Drumm}

In the early 1990s, Drumm \cite{dru92} introduced a strategy to produce proper affine deformations $u$ of~$\rho$.
We now briefly recall it; see \cite{cg00} for more details.

Begin with a convex cocompact representation $\rho\in\Hom(\Gamma,G)$.
Then~$\rho(\Gamma)$ is a Schottky group, playing ping pong on~$\HH^2$: there is a fundamental domain $\mathcal{F}$ in~$\HH^2$ for the action of $\rho(\Gamma)$ that is bounded by finitely many pairwise disjoint geodesics $\ell_1,\ell'_1,\dots,\ell_r,\ell'_r$, and there is a free generating subset $\{\gamma_1, \ldots, \gamma_r\}$ of~$\Gamma$ such that $\ell'_i=\rho(\gamma_i)\cdot\ell_i$ for all~$i$.
The corresponding left crooked planes centered at the origin in~$\RR^{2,1}$ satisfy $\CP(\ell'_i)=\rho(\gamma_i)\cdot\CP(\ell_i)$.
Now orient transversely each geodesic $\ell_i$ or~$\ell'_i$ away from~$\mathcal{F}$ and translate the corresponding crooked plane $\CP(\ell_i)$ or $\CP(\ell'_i)$ by a vector $v_i$ or $v'_i$ in the corresponding stem quadrant $\SQ(\ell_i)$ or $\SQ(\ell'_i)$.
By Proposition~\ref{prop:disjoint-CP-R21}, the resulting crooked planes are pairwise disjoint and bound a closed region $\mathcal{R}$ in~$\RR^{2,1}$.
The Minkowski isometries that identify opposite pairs of crooked planes generate an affine deformation $\Gamma^{\rho,u}$ of $\rho(\Gamma)$, where $u(\gamma_i)=v'_i-\rho(\gamma_i)\cdot v_i$ for all $1\leq i\leq r$.
(In other words, $u$ comes from propagating the movement of the original crooked planes equivariantly by translating, not only each crooked plane, but the whole closed positive crooked half-space it bounds.)

\begin{remark}
If $S$ is nonorientable, then some elements $\rho(\gamma_i)$ belong to $\SO(2,1)\smallsetminus\SO(2,1)_0$ (corresponding to one-sided loops in~$S$); the associated affine isometries $(\rho(\gamma_i),u(\gamma_i))$ preserve the orientation but reverse the time orientation of~$\RR^{2,1}$.
\end{remark}

By construction, $\mathcal{R}$ is a fundamental domain for the action of $\Gamma^{\rho,u}$ on the union $\Gamma^{\rho,u}\cdot\mathcal{R}$ of all translates of~$\mathcal{R}$.
Drumm proved the following.

\begin{theorem}[Drumm \cite{dru92}]\label{thm:Drumm}
In the setting above, $\Gamma^{\rho,u}\cdot\mathcal{R}=\RR^{2,1}$.
\end{theorem}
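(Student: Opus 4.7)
My plan is to prove $\Gamma^{\rho,u}\cdot\mathcal{R}=\RR^{2,1}$ via a Poincar\'e-polyhedron-style argument combining ping-pong with connectedness. The starting point is that Proposition~\ref{prop:disjoint-CP-R21}, applied pairwise to the transversely oriented geodesics $\ell_1,\ell'_1,\ldots,\ell_r,\ell'_r$ and the chosen vectors $v_i,v'_i$ in their stem quadrants, guarantees that the closed crooked half-spaces $\overline{\CHS^+(\ell_i)}+v_i$ and $\overline{\CHS^+(\ell'_j)}+v'_j$ are pairwise disjoint. Consequently $\mathcal{R}$ is a closed topological polyhedron whose boundary is the disjoint union of the $2r$ crooked planes $\CP(\ell_i)+v_i$ and $\CP(\ell'_i)+v'_i$, with no lower-dimensional incidence between faces. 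The side-pairing $g_i=(\rho(\gamma_i),u(\gamma_i))$ carries the closed half-space $\overline{\CHS^-(\ell_i)}+v_i$ (which contains $\mathcal{R}$) onto $\overline{\CHS^+(\ell'_i)}+v'_i$ (which does not), so it exchanges the two matching faces of $\mathcal{R}$.

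The next step is a ping-pong argument on the $2r$ closed crooked half-spaces. Writing any nontrivial $g\in\Gamma^{\rho,u}$ as a reduced word $g_{i_1}^{\epsilon_1}\cdots g_{i_n}^{\epsilon_n}$, an induction on $n$ shows that $g\cdot\mathcal{R}$ is nested inside a chain of half-spaces corresponding to initial segments of the word, and in particular contained in the half-space determined by the first letter. This yields two things: (a) the interiors $g\cdot\mathrm{int}(\mathcal{R})$ are pairwise disjoint as $g$ ranges over $\Gamma^{\rho,u}$, and (b) local finiteness of the tiling, namely that any compact subset of $\RR^{2,1}$ meets only finitely many translates $g\cdot\mathcal{R}$.

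With these in hand, set $\Omega:=\Gamma^{\rho,u}\cdot\mathcal{R}$. By (b), $\Omega$ is closed. It is also open: at any point $p$ in the interior of a face $\CP(\ell_i)+v_i$, the crooked plane (a topologically embedded plane in $\RR^3$) locally splits a Euclidean ball around $p$ into exactly two half-balls, one inside $\mathcal{R}$ and the other, via the side-pairing, inside $g_i^{-1}\cdot\mathcal{R}$. Disjointness of the faces of $\mathcal{R}$ ensures no other face interferes, so $p$ is interior to $\Omega$. Since $\RR^{2,1}$ is connected and $\Omega$ is nonempty, closed, and open, $\Omega=\RR^{2,1}$.

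The delicate step is the local finiteness statement (b). Unlike the hyperbolic case, where translates of a Schottky fundamental domain shrink toward the limit set, crooked planes do not become small in the ambient Euclidean metric under iterated affine isometries. One must instead exploit the specific form of the side-pairings: the linear part contracts in $\HH^2$ toward the attracting endpoint of the first letter, while the compatibility of the translation parts with the stem-quadrant condition ensures that the affine half-space containing $g\cdot\mathcal{R}$ recedes to infinity along the lightlike line representing this endpoint. This escape estimate, essentially due to Drumm, is the main technical content of the argument; everything else is formal.
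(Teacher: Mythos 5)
The paper does not prove Theorem~\ref{thm:Drumm}: it is stated with attribution to Drumm~\cite{dru92} and invoked as a black box, with Lemma~\ref{lem:explode-in-p} offered only as an \emph{alternative} route to the conclusion $\Gamma^{\rho,u}\cdot\mathcal{R}=\RR^{2,1}$ in the specific setting of the proof of Theorem~\ref{thm:crooked-planes}. Your outline reconstructs the correct architecture --- ping-pong nesting inside the $2r$ crooked half-spaces, then an open-closed argument on $\Omega=\Gamma^{\rho,u}\cdot\mathcal{R}$ --- and you are right that openness is essentially formal once the faces of $\mathcal{R}$ are disjoint and side-paired, while closedness turns on a no-accumulation estimate that has no analogue of the hyperbolic ``Schottky shrinking'' mechanism.

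However, that no-accumulation estimate is exactly what your proposal omits. You write that ``this escape estimate, essentially due to Drumm, is the main technical content of the argument; everything else is formal,'' which is an accurate diagnosis but not a proof: for a theorem already attributed to Drumm, citing Drumm for the central step is circular. To fill the gap, Lemma~\ref{lem:explode-in-p} in the paper gives the needed quantitative version (in the strip-deformation case, which is the one actually used in the proof of Theorem~\ref{thm:crooked-planes}). Viewing $\RR^{2,1}\simeq\g$ as Killing fields on $\HH^2$, one fixes a basepoint $p\in\HH^2$ and shows that $\inf\{\Vert V(p)\Vert : V\in\mathcal{D}_{\tilde{\alpha}_n}\}\to\infty$ along any sequence of translated crooked planes going to infinity in the tiling. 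The mechanism is that, crossing a chain of tiles from $p$ out to $\tilde{\alpha}_n$, each relative motion $\psi(\tilde{\alpha}_i)$ lies in the stem quadrant of $\tilde{\alpha}_i$, hence has nonnegative --- and for all but finitely many indices, uniformly bounded-below --- component along the geodesic from $p$ toward the relevant nonrepelling fixed point; summing these contributions yields a lower bound growing linearly in $n$. Some argument of this kind is required to make your informal ``the affine half-space recedes to infinity along the lightlike line'' rigorous; without it, the closedness half of your open-closed argument is unsupported and the proof is incomplete.
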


In particular, $\Gamma^{\rho,u}$ acts properly on~$\RR^{2,1}$ and $\mathcal{R}$ is a fundamental domain for this action.

In summary, given a fundamental domain in~$\HH^2$ and appropriate motions of the corresponding crooked planes, Drumm's procedure yields a proper cocycle~$u$.
The idea of the proof of Theorem~\ref{thm:crooked-planes} is that Theorem~\ref{thm:main}, correctly interpreted, gives an inverse to Drumm's procedure: a proper cocycle~$u$ is an infinitesimal strip deformation by Theorem~\ref{thm:main}, and this determines disjoint geodesics in~$\HH^2$ (namely the preimage of the support of the strips) that bound a fundamental domain in~$\HH^2$, as well as relative motions of the corresponding left crooked planes in~$\RR^{2,1}$.
These yield a fundamental domain in~$\RR^{2,1}$ bounded by crooked planes.

\subsection{Proof of Theorem~\ref{thm:crooked-planes}: the Crooked Plane Conjecture}\label{subsec:proof-crooked-plane-conj}

By \cite{fg83} and \cite{mes90}, any discrete subgroup of $\OO(2,1)\ltimes\RR^3$ that is not virtually solvable and acts properly discontinuously and freely on~$\RR^{2,1}$ is of the form $\Gamma^{\rho,u}$ as in \eqref{eqn:Gamma-rho-u}, where $\Gamma$ is a free group.
We assume that $\rho$ is convex cocompact.
By \cite{glm09} or \cite{dgk13}, the cocycle $u$ belongs to the admissible cone of~$\rho$ (Definition~\ref{def:GLMcone}).
Note that replacing $u$ with $-u$ amounts to conjugating the $\Gamma^{\rho,u}$-action on~$\RR^{2,1}$ by the orientation-reversing linear map $w\mapsto -w$, which maps left crooked planes to right crooked planes.
Therefore, it is sufficient to consider the case that $u$ belongs to the \emph{positive} admissible cone of~$\rho$, and to prove that in this case there exists a fundamental domain in~$\RR^{2,1}$ for $\Gamma^{\rho,u}$ that is bounded by finitely many \emph{left} crooked planes.

By Theorem~\ref{thm:main}, the cocycle $u$ is an infinitesimal strip deformation supported on some collection $\mathscr{E}$ of geodesic arcs $\underline{\alpha}$ on $S = \rho(\Gamma)\backslash\HH^2$, which cut the surface into topological disks.
We use the notation and formalism of Section~\ref{sec:formalism}.
By Observation~\ref{obs:realize-strips}, the infinitesimal strip deformation $u$ is described by a $(\rho,u)$-equivariant assignment $\varphi : \widetilde{\mathscr{T}}\to\g$ of infinitesimal motions to the tiles, such that for any tiles $\delta,\delta'$ adjacent to an edge $\tilde{\alpha}\in\pm\widetilde{\mathscr{E}}$ transversely oriented from $\delta$ to~$\delta'$, the Killing vector field $\psi(\tilde{\alpha})=\varphi(\delta')-\varphi(\delta)$ is an infinitesimal translation of~$\HH^2$ orthogonal to~$\tilde{\alpha}$, in the positive direction; in other words, $\psi(\tilde{\alpha})$ belongs to the stem quadrant $\mathrm{SQ}(\tilde{\alpha})$ (Definition~\ref{def:stem-quadrant}), by Lemma~\ref{lem:inf-transl-in-R21}.(3').
To any $\tilde{\alpha}\in\widetilde{\mathscr{E}}$ we associate the crooked plane $\mathcal{D}_{\tilde{\alpha}}:=\CP(\tilde{\alpha})+v_{\tilde{\alpha}}$ where
\begin{equation}\label{eqn:average}
v_{\tilde{\alpha}} := \frac{\varphi(\delta) + \varphi(\delta')}{2} .
\end{equation}
One could think of~$v_{\tilde{\alpha}}$ as the motion of the edge $\tilde{\alpha}$ under the infinitesimal deformation~$u$, which we take to be the average of the motions of the adjacent tiles. 
Since $\varphi$ is $(\rho,u)$-equivariant, the map $\tilde{\alpha}\mapsto\mathcal{D}_{\tilde{\alpha}}$ is $(\rho, (\rho,u))$-equivariant, meaning that $\mathcal{D}_{\rho(\gamma)\cdot\tilde{\alpha}} = \rho(\gamma)\cdot\mathcal D_{\tilde{\alpha}} + u(\gamma)$ for all $\tilde{\alpha}\in\widetilde{\mathscr{E}}$ and $\gamma\in\Gamma$.

We claim that the crooked planes $\mathcal{D}_{\tilde{\alpha}}$, for $\tilde{\alpha}\in\widetilde{\mathscr{E}}$, are pairwise disjoint.
Indeed, consider two adjacent edges $\tilde{\alpha},\tilde{\alpha}'$ bounding tiles $\delta,\delta',\delta''$ as in Figure~\ref{fig:diagram-proof-CP}.
\begin{figure}[ht!]
\centering
\labellist
\small\hair 2pt
\pinlabel {\large $\delta'$} at 120 90
\pinlabel $\tilde{\alpha}$ [l] at 106 185
\pinlabel $\tilde{\alpha}'$ [r] at 192 155
\pinlabel {\large $\delta$}  at 80 220
\pinlabel {\large $\delta''$} at 230 185
\endlabellist
\includegraphics[scale=0.38]{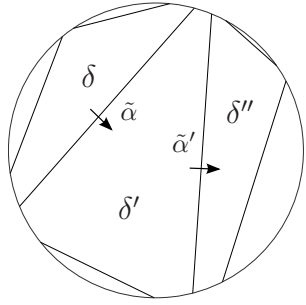}
\caption{Two adjacent edges $\tilde{\alpha},\tilde{\alpha}$ and tiles $\delta,\delta',\delta''$ as in the proofs of Theorems \ref{thm:crooked-planes} and~\ref{thm:AdS-crooked-planes}}
\label{fig:diagram-proof-CP}
\end{figure}
Transversely orient $\tilde{\alpha}$ from $\delta$ to~$\delta'$, and $\tilde{\alpha}'$ from $\delta'$ to~$\delta''$, so that the positive half-plane of $\tilde{\alpha}$ in~$\HH^2$ (\ie the connected component of $\HH^2\smallsetminus\tilde{\alpha}$ lying on the positive side of~$\tilde{\alpha}$ for the transverse orientation) contains that of~$\tilde{\alpha}'$.
Note that
$$v_{\tilde{\alpha}} - \varphi(\delta') = \frac{1}{2}\psi(-\tilde{\alpha}) \quad\quad\mathrm{and}\quad\quad v_{\tilde{\alpha}'} - \varphi(\delta') = \frac{1}{2}\psi(\tilde{\alpha}') .$$
Since $\psi(-\tilde{\alpha}) \in \SQ(-\tilde{\alpha})$ and $\psi(\tilde{\alpha}') \in \SQ(\tilde{\alpha}')$, Proposition~\ref{prop:disjoint-CP-R21} implies
$$\overline{\CHS^+(\tilde{\alpha}')} + v_{\tilde{\alpha}'} - \varphi(\delta') \subset \CHS^-(-\tilde{\alpha}) + v_{\tilde{\alpha}} - \varphi(\delta') ,$$
hence
\begin{equation}\label{eqn:nesting1}
\overline{\CHS^+(\tilde{\alpha}')} + v_{\tilde{\alpha}'} \subset \CHS^-(-\tilde{\alpha}) + v_{\tilde{\alpha}} = \CHS^+(\tilde{\alpha}) + v_{\tilde{\alpha}} .
\end{equation}
In particular, the crooked planes $\mathcal D_{\tilde{\alpha}}$ and $\mathcal D_{\tilde{\alpha}'}$ are disjoint whenever $\tilde{\alpha}, \tilde{\alpha}'$ border the same tile.
Now, let $\tilde{\alpha},\tilde{\alpha}'\in\widetilde{\mathscr{E}}$ be any distinct edges.
Assign transverse orientations so that the positive half-plane of $\tilde{\alpha}$ in~$\HH^2$ contains that of~$\tilde{\alpha}'$.
There is a sequence of transversely oriented edges $\tilde{\alpha} = e_0, e_1, \ldots, e_N = \tilde{\alpha}' \in \pm \widetilde{\mathscr{E}}$ such that any consecutive edges $e_i, e_{i+1}$ border a common tile and the positive half-plane of $e_i$ in~$\HH^2$ contains that of~$e_{i+1}$.
Applying \eqref{eqn:nesting1} to $e_i,e_{i+1}$, we see by induction that
\begin{equation}\label{eqn:nesting2}
\overline{\CHS^+(\tilde{\alpha}')} + v_{\tilde{\alpha}'} \subset \CHS^+(\tilde{\alpha}) + v_{\tilde{\alpha}}.
\end{equation}
In particular, the crooked planes $\mathcal D_{\tilde{\alpha}}$ and $\mathcal D_{\tilde{\alpha}'}$ are disjoint.

To conclude, we note that since the arcs supporting the infinitesimal strip deformation $u$ cut the surface $S=\rho(\Gamma)\backslash\HH^2$ into topological disks, we may choose geodesic arcs $\tilde{\alpha}_1,\tilde{\alpha}'_1,\dots,\tilde{\alpha}_r,\tilde{\alpha}'_r$ from $\widetilde{\mathscr{E}}$ that bound a fundamental domain $\mathcal{F}$ in $\HH^2$ for the action of $\rho(\Gamma)$, and a free generating subset $\{ \gamma_1,\ldots, \gamma_r\} $ of~$\Gamma$ such that $\tilde{\alpha}'_i = \rho(\gamma_i) \cdot \tilde{\alpha}_i$ for all~$i$.
By \eqref{eqn:nesting2}, the crooked planes $\mathcal{D}_{\tilde{\alpha}_i}$ and $\mathcal{D}_{\tilde{\alpha}'_i}$, for $1\leq i\leq r$, are pairwise disjoint and bound a closed, connected region $\mathcal{R}$ in~$\RR^{2,1}$.
For any~$i$, the element $(\rho(\gamma_i), u(\gamma_i))\in\Gamma^{\rho,u}$ identifies $\mathcal{D}_{\tilde{\alpha}_i}$ with~$\mathcal{D}_{\tilde{\alpha}'_i}$.
Therefore, $\mathcal{R}$ is a fundamental domain for the action of $\Gamma^{\rho,u}$ on $\Gamma^{\rho,u}\cdot\mathcal{R}$.
That $\Gamma^{\rho,u}\cdot\mathcal{R}=\RR^{2,1}$ then follows from Theorem~\ref{thm:Drumm}, or alternatively from Lemma~\ref{lem:explode-in-p} below, which implies that the crooked planes $\mathcal{D}_{\tilde{\alpha}}$, for $\tilde{\alpha} \in \widetilde{\mathscr{E}}$, do not accumulate on any set.

\begin{lemma}\label{lem:explode-in-p}
For any $p\in\HH^2$ and any sequence $(\tilde{\alpha}_n)\in\widetilde{\mathscr{E}}^{\NN}$ going to infinity,
$$\inf\big\{ \Vert V_n(p)\Vert ~|~ V_n\in\mathcal{D}_{\tilde{\alpha}_n}\big\} \underset{\scriptstyle n\rightarrow +\infty}{\longrightarrow}  +\infty .$$
\end{lemma}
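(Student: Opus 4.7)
The plan is to exploit the $(\rho,(\rho,u))$-equivariance of the assignment $\tilde{\alpha}\mapsto\mathcal{D}_{\tilde{\alpha}}$ together with the convex cocompact geometry of~$\rho$. Since $\widetilde{\mathscr{E}}$ decomposes into finitely many $\rho(\Gamma)$-orbits, after extracting a subsequence we may write $\tilde{\alpha}_n=\rho(\gamma_n)\cdot\tilde{\beta}$ for some fixed $\tilde{\beta}\in\widetilde{\mathscr{E}}$ and $\gamma_n\to\infty$ in~$\Gamma$. Any $V\in\mathcal{D}_{\tilde{\alpha}_n}$ has the form $V=\Ad(\rho(\gamma_n))V'+u(\gamma_n)$ for some $V'\in\mathcal{D}_{\tilde{\beta}}$; using that $\rho(\gamma_n)_{\ast}$ is an isometry between tangent spaces and the cocycle relation $\Ad(\rho(\gamma_n)^{-1})u(\gamma_n)=-u(\gamma_n^{-1})$, a short computation yields
\[
\|V(p)\|_p \;=\; \bigl\|\bigl(V'-u(\gamma_n^{-1})\bigr)(p_n)\bigr\|_{p_n}, \qquad p_n:=\rho(\gamma_n)^{-1}p.
\]
It thus suffices to show $\inf_{V'\in\mathcal{D}_{\tilde{\beta}}}\|(V'-u(\gamma_n^{-1}))(p_n)\|_{p_n}\to+\infty$.

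Since $\rho$ is convex cocompact, $(p_n)$ escapes every compact subset of~$\HH^2$, converging (up to further subsequence) to a point $\xi$ in the limit set $\Lambda_\rho\subset\partial_\infty\HH^2$. Crucially, because the arcs of~$\mathscr{E}$ exit the convex core of~$S$ through its funnels, the two endpoints of~$\tilde{\beta}$ in~$\partial_\infty\HH^2$ lie \emph{outside} $\Lambda_\rho$; hence $\xi$ is disjoint from $\overline{\tilde{\beta}}$ in $\HH^2\cup\partial_\infty\HH^2$, and $d(p_n,q)\to+\infty$ uniformly for $q\in\tilde{\beta}$.

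Using $Y(q)=Y\wedge q$ and the cross-product identity, one checks that $\|Y(q)\|_q^2=\langle Y,Y\rangle+\langle Y,q\rangle^2$, which equals the positive-definite $\RR^{2,1}$-norm squared of the orthogonal projection of~$Y$ onto~$q^\perp$. Decomposing $V'=v_{\tilde{\beta}}+W$ with $W\in\CP(\tilde{\beta})$, the infimum splits into two regimes. When $W\neq 0$, the Killing field $W$ has a nonrepelling fixed point in~$\overline{\tilde{\beta}}$; the explicit norm formulas of Lemma~\ref{lem:inf-transl-in-R21} combined with $d(p_n,\overline{\tilde{\beta}})\to+\infty$ force $\|W(p_n)\|_{p_n}$ to grow rapidly with the size of~$W$, dominating any bounded cancellation. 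When $W=0$, one must show $\|(v_{\tilde{\beta}}-u(\gamma_n^{-1}))(p_n)\|_{p_n}\to+\infty$: this follows from the properness of the cocycle~$u$ (valid since $u\in\adm(\rho)$, by~\cite{glm09}), because the Margulis invariant bound $\D\lambda_{\gamma_n^{-1}}(u)\ge\varepsilon\,\lambda_{\gamma_n^{-1}}(\rho)$ guarantees that $u(\gamma_n^{-1})$ has a large spacelike component along the translation axis of $\rho(\gamma_n^{-1})$---a direction transverse to the lightlike line $\RR\xi$.

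The main obstacle is the intermediate regime, where a nontrivial $W$ and the growing term $u(\gamma_n^{-1})$ could conspire to produce cancellation in the sum $W(p_n)+(v_{\tilde{\beta}}-u(\gamma_n^{-1}))(p_n)$. Resolving this requires quantifying the transversality between two families of directions: on the one hand, the ``directions at infinity'' of the subset $\{W(p_n):W\in\CP(\tilde{\beta})\}\subset T_{p_n}\HH^2$, which degenerate as $p_n\to\xi$ to rays determined by the endpoints of~$\tilde{\beta}$; on the other hand, the asymptotic direction of~$u(\gamma_n^{-1})(p_n)$, governed by the accumulation of the translation axes of $\rho(\gamma_n^{-1})$ on~$\xi\in\Lambda_\rho$. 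The key point is that the endpoints of~$\tilde{\beta}$ lie in $\partial_\infty\HH^2\smallsetminus\Lambda_\rho$ while $\xi\in\Lambda_\rho$, so these two families of directions are uniformly separated, yielding the required lower bound.
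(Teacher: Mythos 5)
Your reduction via equivariance is correct: writing $\tilde{\alpha}_n=\rho(\gamma_n)\cdot\tilde{\beta}$ and using the cocycle identity $u(\gamma_n^{-1})=-\Ad(\rho(\gamma_n)^{-1})u(\gamma_n)$ does give $\Vert V(p)\Vert_p=\Vert(V'-u(\gamma_n^{-1}))(p_n)\Vert_{p_n}$ with $p_n=\rho(\gamma_n)^{-1}p$, and $p_n$ does accumulate on $\Lambda_\rho$ while $\partial_\infty\tilde{\beta}$ lies outside it. But the argument then has a genuine gap, and you acknowledge it yourself: the ``main obstacle'' paragraph sketches a transversality-of-directions claim without proving it, and it is not at all clear that it holds. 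The set of directions $\{W(p_n)/\Vert W(p_n)\Vert: W\in\CP(\tilde{\beta})\}$ is \emph{not} close to a small set of rays: the elliptic part of $\CP(\tilde\beta)$ alone produces directions perpendicular to the geodesics from $p_n$ to points of $\tilde{\beta}$, while the hyperbolic wings produce yet others, and since $\CP(\tilde\beta)$ contains vectors of every norm, one cannot separate ``large $W$'' from ``small $W$'' by scale alone. Moreover the $W=0$ case is also unfinished: the Margulis-invariant bound controls $\langle u(\gamma_n^{-1}),X(\gamma_n^{-1})\rangle$, which does not directly bound $\Vert u(\gamma_n^{-1})(p_n)\Vert_{p_n}$, and in any case the fixed vector $v_{\tilde{\beta}}$ also satisfies $\Vert v_{\tilde{\beta}}(p_n)\Vert_{p_n}\to\infty$ (a generic Killing field blows up at infinity), so even there you face potential cancellation between two divergent terms, which your proposal does not address.

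The paper avoids all of this by a more direct route: it picks a path of tiles $\delta_0\ni p,\delta_1,\dots,\delta_n$ with $\tilde\alpha_i$ between $\delta_{i-1}$ and $\delta_i$, writes $v_{\tilde\alpha_n}=\varphi(\delta_0)+\psi(\tilde\alpha_1)+\dots+\psi(\tilde\alpha_{n-1})+\tfrac12\psi(\tilde\alpha_n)$ as a telescoping sum, and bounds the \emph{component of $V_n$ along a single geodesic} $\ell_n$ from $p$ to the nonrepelling fixed point $q_n$ of $Y_n\in\CP(\tilde{\alpha}_n)$. Because $\ell_n$ crosses each $\tilde\alpha_i$ positively and the angles are bounded below by cocompactness of the tiles, each term $\psi(\tilde\alpha_i)$ contributes a uniformly positive amount to that component, while $Y_n$ contributes nonnegatively and $\varphi(\delta_0)$ contributes a bounded amount. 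This scalar accounting makes cancellation structurally impossible, which is precisely what your direction-analysis approach struggles to rule out. If you want to salvage your approach, the cleanest fix is likely to replace the ``uniform separation of directions'' heuristic by projecting onto a well-chosen geodesic exactly as the paper does.
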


\begin{proof}
It is enough to treat the case that there exists a sequence $(\delta_n)_{n\in\NN}$ of distinct elements of~$\widetilde{\mathscr{T}}$ (tiles) such that $p\in\delta_0$ and $\tilde{\alpha}_n\in\widetilde{\mathscr{E}}$ is adjacent to $\delta_n$ and~$\delta_{n-1}$ for all $n\geq 1$.
We transversely orient $\tilde{\alpha}_n$ towards~$\delta_n$.
Consider a Killing field $V_n=Y_n+ v_{\tilde{\alpha}_n}\in\mathcal{D}_{\tilde{\alpha}_n}$, where $Y_n\in\nolinebreak\CP(\tilde{\alpha}_n)$.
By definition, the Killing field $Y_n$ admits a nonrepelling fixed point $q_n$ in the closure of~$\tilde{\alpha}_n$ in $\HH^2\cup\partial_{\infty}\HH^2$.
Let $\ell_n$ be the geodesic line of~$\HH^2$ whose closure contains $p$ and~$q_n$; orient it from the former to the latter.
The component (see the proof of Lemma~\ref{lem:parallel-CP-R21}) of~$Y_n$ along~$\ell_n$ is nonnegative.
Moreover, for any $i\leq n$ the line $\ell_n$ crosses $\tilde{\alpha}_i$ in the positive direction (at a point~$r_i$), and so the Killing field $\psi(\tilde{\alpha}_i)$ has nonnegative component along~$\ell_n$, equal to $\Vert\psi(\tilde{\alpha}_i)(r_i)\Vert \, \sin \measuredangle_{r_i}(\ell_n, \tilde{\alpha}_i)$.
In fact, for $2\leq i \leq n-1$ the angle $\measuredangle_{r_i}(\ell_n, \tilde{\alpha}_i)$ is bounded below by a positive constant depending only on the geometry of the tiles, and $\Vert \psi(\tilde{\alpha}_i)(r_i)\Vert$ is at least the width of the infinitesimal strip deformation along~$\tilde{\alpha}_i$.
It follows that the component along~$\ell_n$ of the Killing~field
$$V_n=Y_n+ \varphi(\delta_0) + \psi(\tilde{\alpha}_1) + \psi(\tilde{\alpha}_2)+ \dots + \psi(\tilde{\alpha}_{n-1})+\frac{1}{2}\psi(\tilde{\alpha}_n)$$
goes to infinity as $n\rightarrow +\infty$.
This completes the proof.
\end{proof}

\begin{remark}
In \eqref{eqn:average} above, we could have taken
$$v_{\tilde{\alpha}} := (1-t)\,\varphi(\delta) + t\,\varphi(\delta')$$
for an arbitrary fixed $t\in (0,1)$, not necessarily $t=1/2$; the proof would have worked the same way.
For each arc supporting the strip deformation, there is an interval's worth of parallel crooked planes (for $t$ varying in $(0,1)$), each embedded in the Margulis spacetime; their union is a \emph{parallel crooked slab}, as defined in \cite{cdg13}.
Crooked planes from different parallel crooked slabs never intersect; crooked planes in the same parallel crooked slab are tangent along subsets of their stems.
\end{remark}

\section{Fundamental domains in anti-de Sitter $3$-space}\label{sec:AdS}

In this section we introduce piecewise totally geodesic surfaces in $\AdS$ analogous to the crooked planes of Section~\ref{subsec:crooked-R21}.
We establish a sufficient condition for disjointness similar to Proposition~\ref{prop:disjoint-CP-R21} and prove Theorem~\ref{thm:AdS-crooked-planes}.

\subsection{$\AdSS$ crooked planes}\label{subsec:AdS-crooked-planes}

As in Minkowski space, we define a crooked plane in $\AdS$ to be the union of three pieces (see Figure~\ref{fig:2CP}):
\begin{itemize}
  \item a \emph{stem}, defined to be the union of all causal (\ie timelike or lightlike) geodesics of a given timelike plane of $\AdS$ that pass through a given point, called the \emph{center} of the $\AdSS$ crooked plane;
  \item two \emph{wings}, defined to be two disjoint open lightlike half-planes of $\AdS$ whose respective boundaries are the two (lightlike) boundary lines of the stem.
\end{itemize}
\begin{figure}[ht!]
\centering
\includegraphics[scale=0.8]{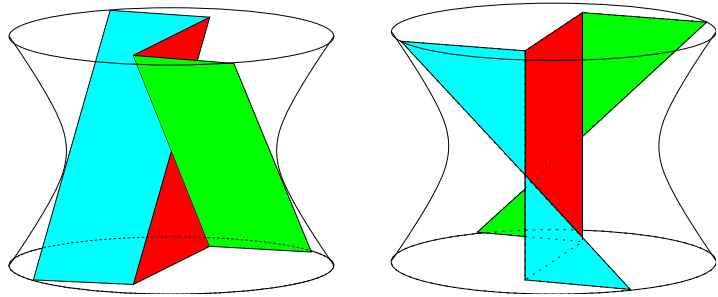}
\caption{A left $\AdSS$ crooked plane, seen in two different affine charts of $\PP^3(\RR)\supset\AdS$. The stem (red) is a bigon whose closure meets the boundary of $\AdS$ in two points. On the left, these two points are at infinity; on the right, the center of the stem is at infinity. Each wing (green or blue) is itself a bigon, bounded by a line contained in the boundary of $\AdS$ and a line of the stem.}
\label{fig:2CP}
\end{figure}
As in Minkowski space, an $\AdSS$ crooked plane centered at the identity is determined by a geodesic line $\ell$ of~$\HH^2$ and a choice of orientation (left or right): we denote by $\ACP(\ell)$ the \emph{left $\AdSS$ crooked plane centered at $e\in G_0$ associated with~$\ell$}, which is described explicitly as follows:
\begin{itemize}
  \item the interior of the stem $\mathsf{S}(\ell)$ of $\ACP(\ell)$ is the set of elliptic elements $h\in G_0$ whose fixed point belongs to~$\ell$;
  \item the boundary of the stem $\mathsf{S}(\ell)$ is $\{ e\}$ union the set of parabolic elements $h\in G_0$ fixing one of the two endpoints $[v^+],[v^-]$ of $\ell$ in $\partial_{\infty}\HH^2$;
  \item the wings of $\ACP(\ell)$ are $\mathsf{W}(v^+)$ and $\mathsf{W}(v^-)$, where $\mathsf{W}(v^+)$ is the set of hyperbolic elements $h\in G_0$ with attracting fixed point $[v^+]$,~and~simi\-larly for~$v^-$.
\end{itemize}
In other words, $\ACP(\ell)\smallsetminus\{ e\}$ is the set of orientation-preserving isometries of~$\HH^2$ (\ie elements of $G_0\simeq\AdS$) with a nonrepelling fixed point in~$\overline{\ell}$, where $\overline{\ell}$ is the closure of $\ell$ in $\HH^2\cup\partial_{\infty}\HH^2$, as in Section~\ref{subsec:crooked-R21}.
Note that this is exactly the image under the exponential~map of the crooked plane $\CP(\ell)\subset\RR^{2,1}$ from Section~\ref{subsec:crooked-R21} (see also \cite{gol13}). We also have $\ACP(g\cdot \ell)=g\ACP(\ell)g^{-1}$ for all $g\in G_0$.

A general \emph{left $\AdSS$ crooked plane} is a $G_0$-translate (on either side) of $\ACP(\ell)$ for some geodesic $\ell$ of~$\HH^2$.
The images of left $\AdSS$ crooked planes under the orientation-reversing isometry $g\mapsto g^{-1}$ of $\AdS$ are called \emph{right $\AdSS$ crooked planes}; we will not work with them in this paper.

Similarly to the Minkowski setting, an $\AdSS$ crooked plane divides $\AdS$ into two connected components (see Figure~\ref{fig:2CP}).
Note that by contrast a timelike geodesic plane in $\AdS$ \emph{does not} divide $\AdS$ into two components: it is one-sided (topologically a M\"obius strip).
Given a transverse orientation of~$\ell$, we denote by $\ACHS^+(\ell)$ (\resp $\ACHS^-(\ell)$) the connected component of $\AdS\smallsetminus\nolinebreak\ACP(\ell)$ consisting of nontrivial elements $g\in G_0$ with a nonrepelling fixed point in $(\HH^2\cup\partial_{\infty}\HH^2)\smallsetminus\overline{\ell}$ lying on the positive (\resp negative) side of~$\overline{\ell}$.

\begin{remark}
The Minkowski space $\RR^{2,1}$ and the anti-de Sitter space $\AdS$ can both be embedded into the $3$-dimensional \emph{Einstein space} $\mathrm{Ein}^3$.
By \cite{gol13}, the closure in $\mathrm{Ein}^3$ of a Minkowski or $\AdSS$ crooked plane is a \emph{crooked surface} in the sense of Frances \cite{fra03}.
Note that Drumm's strategy from Section~\ref{subsec:Drumm} has recently been carried out in the Einstein setting in \cite{cfl13}, although no complete disjointness criterion is known for the moment.
\end{remark}

\subsection{Disjointness for left $\AdSS$ crooked planes}\label{subsec:disjoint-AdS-crooked-planes}

In \cite{dgk-crooked-AdS} we give a complete disjointness criterion for $\AdSS$ crooked planes.
Here we only need the following sufficient condition analogous to Proposition~\ref{prop:disjoint-CP-R21}; as before, $\mathrm{SQ}(\ell)$ denotes the stem quadrant of~$\ell$ (Definition~\ref{def:stem-quadrant}).

\begin{proposition}\label{prop:disjoint-CP-AdS}
Let $\ell,\ell'$ be two disjoint geodesics of~$\HH^2$, transversely oriented away from each other.
For any $g\in\exp(\mathrm{SQ}(\ell))$ and $g'\in\exp(\mathrm{SQ}(\ell'))$,
$$g\,\overline{\ACHS^+(\ell)} \subset g' \ACHS^-(\ell') ;$$
in particular, the crooked planes $g\ACP(\ell)$ and $g'\ACP(\ell')$ are disjoint.
\end{proposition}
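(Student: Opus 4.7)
My strategy is to adapt the proof of Proposition~\ref{prop:disjoint-CP-R21}. The key step is to establish the AdS analogue of Lemma~\ref{lem:parallel-CP-R21}:

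\emph{Key Lemma.} For any transversely oriented geodesic $\ell$ of~$\HH^2$ and any $g\in\exp(\SQ(\ell))$,
$$g\,\overline{\ACHS^+(\ell)}\subset\overline{\ACHS^+(\ell)}\smallsetminus\{e\}.$$

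Granting the Key Lemma, Proposition~\ref{prop:disjoint-CP-AdS} follows by a combinatorial argument parallel to the Minkowski proof of Proposition~\ref{prop:disjoint-CP-R21}: combine the Key Lemma applied to $(\ell,g)$ and to $(\ell',g')$ (after reversing the transverse orientation of~$\ell'$) with the ``obvious'' inclusion $\overline{\ACHS^+(\ell)}\smallsetminus\{e\}\subset\ACHS^-(\ell')$. The latter is a direct consequence of the disjointness of the closed positive sides $L^+(\ell)$ and $L^+(\ell')$ in $\HH^2\cup\partial_\infty\HH^2$ (coming from the transverse orientations pointing away from each other), together with the characterization of $\overline{\ACHS^{\pm}}$ by nonrepelling fixed points.

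To prove the Key Lemma, first observe that the nontriviality $gh\neq e$ for $h\in\overline{\ACHS^+(\ell)}$ is immediate: otherwise $h=g^{-1}\in\exp(\SQ(-\ell))$ would be hyperbolic with attracting fixed point in $\partial_\infty\HH^2\cap L^-(\ell)$, hence would lie in $\ACHS^-(\ell)$ rather than $\overline{\ACHS^+(\ell)}$. For the main inclusion, I would use a continuous deformation argument: set $g_t=\exp(tX)$ for $X\in\SQ(\ell)$ with $g=g_1$, fix $h\in\overline{\ACHS^+(\ell)}$, and analyse the path $t\mapsto g_th$ in~$G_0$. Since $g_0h=h\in\overline{\ACHS^+(\ell)}$, either the path stays in $\overline{\ACHS^+(\ell)}$ for all $t\in[0,1]$ (which is what we want) or there is a smallest exit time $t_0\in(0,1]$ at which $g_{t_0}h$ lies in $\ACP(\ell)\smallsetminus\{e\}$ with velocity vector pointing (weakly) into $\ACHS^-(\ell)$.

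The main obstacle will be ruling out this second scenario by a transversality computation, the AdS counterpart of the ``component along~$\alpha$'' argument in the proof of Lemma~\ref{lem:parallel-CP-R21}. Concretely, the velocity at $t_0$ equals $X\cdot g_{t_0}h\in T_{g_{t_0}h}G_0$; via right-trivialization one identifies $T_{g_{t_0}h}G_0\simeq\g\simeq\RR^{2,1}$ and recovers the constant vector $X\in\SQ(\ell)$. Using Lemma~\ref{lem:inf-transl-in-R21}, one can describe the outward conormal of $\overline{\ACHS^+(\ell)}$ at $k=g_{t_0}h\in\ACP(\ell)\smallsetminus\{e\}$ (under the same trivialization) in terms of the nonrepelling fixed point of~$k$ on~$\overline{\ell}$; the contradiction then follows from the positivity of the pairing between~$X$ and the inward direction. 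This positivity amounts to the statement that a positive infinitesimal translation perpendicular to~$\ell$ always pushes an element of $\ACP(\ell)\smallsetminus\{e\}$ into $\ACHS^+(\ell)$, which can be verified by case analysis on the dynamical type of~$k$ (elliptic with fixed point on~$\ell$, or parabolic fixing an endpoint of~$\overline{\ell}$).
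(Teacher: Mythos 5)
Your reduction of the Proposition to a self-improvement lemma follows the same combinatorial scheme as the paper, which reduces to Lemma~\ref{lem:parallel-CP-AdS} applied to $(\ell,g)$ and $(\ell',g')$. (A minor difference: the paper states the sharper $g\,\overline{\ACHS^+(\ell)}\subset\ACHS^+(\ell)$, while your Key Lemma only keeps $g\,\overline{\ACHS^+(\ell)}\subset\overline{\ACHS^+(\ell)}\smallsetminus\{e\}$; the weaker form does suffice for the Proposition via the chain $g\,\overline{\ACHS^+(\ell)}\subset\overline{\ACHS^+(\ell)}\smallsetminus\{e\}\subset\ACHS^-(\ell')\subset g'\ACHS^-(\ell')$, the last inclusion coming from the Key Lemma at $(\ell',g')$.)

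Where you diverge from the paper is the proof of the Key Lemma, and here there is a real gap. The paper establishes a quantitative displacement inequality, namely \eqref{eqn:g-moves-p-away}: for $p$ on the positive side of $\ell$ and $p'$ on the negative side, $d(g\cdot p,p')\geq d(p,p')+\eta$ with $\eta=2\log\cosh(\lambda(g)/2)>0$. It then rules out $gh=h'$ by comparing displacements of $h$-invariant and $h'$-invariant points or horoballs. This is a global, coordinate-free argument that avoids looking at the boundary surface $\ACP(\ell)$ at all. Your proposal instead runs the flow $g_t=\exp(tX)$ and tries to exclude exit from $\overline{\ACHS^+(\ell)}$ by a transversality argument at the first touching time. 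The hinge of your argument is the claim that, for every $k\in\ACP(\ell)\smallsetminus\{e\}$ and every $X\in\SQ(\ell)$, the right-trivialized velocity $X$ points \emph{strictly} into $\ACHS^+(\ell)$. This is stated as something that ``can be verified by case analysis'' but is not actually carried out, and it is the entire content of the lemma. The verification is genuinely delicate: $\ACP(\ell)$ is only piecewise smooth, with corners along the two parabolic boundary lines of the stem and a cone point at $e$; the wings are lightlike half-planes, so Lorentzian normals degenerate and one must work with the manifold structure directly; and at the parabolic corners the ``nonrepelling fixed point'' map from $\ACP(\ell)$ to $\overline{\ell}$ is not a submersion (the fixed point of $g_\epsilon k$ separates from $\overline{\ell}$ only at square-root rate in $\epsilon$ there), so the naive linearization breaks down precisely where care is most needed. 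You also need strictness everywhere to exclude a tangential touch without crossing, and you need to handle both faces meeting at a corner simultaneously. None of this is impossible, but it is substantially more work than the sketch suggests, and if carried out it would amount to re-deriving, pointwise, the global inequality that the paper proves in one stroke. As written, the key step is asserted rather than proved.

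One further small point: the inclusion $\overline{\ACHS^+(\ell)}\smallsetminus\{e\}\subset\ACHS^-(\ell')$ that you invoke (and the paper's $\ACHS^+(\ell)\subset\ACHS^-(\ell')$) uses implicitly that $\overline{\ell}$ and $\overline{\ell'}$ are disjoint in $\HH^2\cup\partial_\infty\HH^2$, not merely that $\ell,\ell'$ are disjoint in $\HH^2$; this is harmless in the paper's applications (the geodesics come from a geodesic hyperideal triangulation and do not share ideal endpoints), but you should be aware that it is an assumption rather than an automatic consequence of ``disjoint.''
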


It is clear from the definitions in terms of nonrepelling fixed points of isometries of~$\HH^2$ that $\ACHS^+(\ell)\subset\ACHS^-(\ell')$.
Therefore Proposition~\ref{prop:disjoint-CP-AdS} is a consequence of the following lemma, applied to $(\ell,g)$ and $(\ell',g')$.

\begin{lemma}\label{lem:parallel-CP-AdS}
For any transversely oriented geodesic $\ell$ of~$\HH^2$ and any element $g\in\exp(\mathrm{SQ}(\ell))$,
$$g \, \overline{\ACHS^+(\ell)}\subset\ACHS^+(\ell) .$$
\end{lemma}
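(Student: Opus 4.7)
The plan is to parallel the proof of the Minkowski counterpart Lemma~\ref{lem:parallel-CP-R21}, replacing the additive structure of $\g \cong \RR^{2,1}$ by multiplication in $G_0 \cong \AdS$. Unpacking the definitions, $\overline{\ACHS^+(\ell)}$ consists of $e$ together with the elements $f \in G_0$ whose unique nonrepelling fixed point in $\overline{\HH^2} := \HH^2 \cup \partial_\infty \HH^2$ lies in $\overline{L^+} := L^+ \cup \overline{\ell}$, where $L^+$ is the open positive half of $\overline{\HH^2} \smallsetminus \overline{\ell}$; its subset $\ACHS^+(\ell)$ corresponds to this fixed point lying strictly in $L^+$. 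Thus the conclusion amounts to showing: for every $h \in \overline{\ACHS^+(\ell)}$, the product $gh$ is nontrivial and $\mathrm{NR}(gh) \in L^+$.

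For $h = e$ we have $gh = g$ whose attracting fixed point $\xi_g^+$ lies in $L^+$. For $h \neq e$, the case $gh = e$ would force $h = g^{-1}$, hyperbolic with attracting fixed point $\xi_g^- \in L^-$, contradicting $h \in \overline{\ACHS^+(\ell)}$. The main step is to rule out, by contradiction, the case $q := \mathrm{NR}(gh) \in \overline{L^-}$ (with $h, gh \neq e$). Set $q_h := \mathrm{NR}(h) \in \overline{L^+}$ and let $\alpha$ be the oriented geodesic of $\overline{\HH^2}$ from $q$ to $q_h$. The fixed point equation rewrites as $h(q) = g^{-1}(q)$. On one hand, integrating along the orbit $t \mapsto g_t^{-1}(q)$ (an equidistant curve from the axis $\beta$ of $v$) the infinitesimal fact already used in Lemma~\ref{lem:parallel-CP-R21}, namely that $-v \in -\mathrm{SQ}(\ell)$ has strictly negative $\alpha$-component at $q$ --- because $\alpha$ and $\beta$ both cross $\ell$ from $L^-$ to $L^+$ --- yields that $g^{-1}(q)$ lies strictly on the non-$q_h$-side of the perpendicular $\pi$ to $\alpha$ at $q$. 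On the other hand, since $q_h$ is the nonrepelling fixed point of $h$, the image $h(q)$ lies weakly on the $q_h$-side of $\pi$: namely, $h(q)$ lies on the $q_h$-centered distance circle (elliptic case), horocycle (parabolic), or a strictly smaller horoball at $q_h$ (hyperbolic), each of which is contained in the closed $q_h$-side of $\pi$. Combining, $h(q) = g^{-1}(q)$ lies simultaneously strictly on one side of $\pi$ and weakly on the other, a contradiction.

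The main obstacle will be the parabolic and degenerate subcases, where $h(q)$ can lie exactly on $\pi$ rather than strictly on the $q_h$-side, so that the whole contradiction rests on the strictness of the $g^{-1}$-side. I would handle this by verifying that the only configuration in which the $g^{-1}$-side could fail to be strict is $q = \xi_g^-$, and that in this scenario $h$ must be hyperbolic with repelling fixed point at $\xi_g^-$, whence $(gh)'(\xi_g^-) > 1$ makes $\xi_g^-$ repelling also for $gh$, contradicting $q = \mathrm{NR}(gh)$. Should this case analysis prove cumbersome, I would fall back on a homotopy argument: setting $g_t := \exp(tv)$ and considering the path $t \mapsto g_t h$ starting at $h \in \overline{\ACHS^+(\ell)}$, any exit from this set would force a first crossing with $\ACP(\ell)$ at some $t_0 > 0$, and the tangent direction $v \cdot g_{t_0} h$ there, transported to $\g$ by right translation, would contradict the infinitesimal Lemma~\ref{lem:parallel-CP-R21} applied to the tangent crooked plane at the crossing point.
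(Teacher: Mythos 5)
Your setup---reducing to the nonrepelling fixed point $q$ of $gh$ and the identity $h(q)=g^{-1}(q)$, then comparing where $h(q)$ and $g^{-1}(q)$ can sit---is a reasonable reformulation, and Claim~2 is correct. However, Claim~1 is \emph{false as stated}: $g^{-1}(q)$ need not lie in the closed half-plane opposite $q_h$. Work in the upper half-plane model with $\ell$ the unit circle, $L^+$ the outside, and $g\colon z\mapsto 2z$, and take $q=\tfrac12(1+\sqrt{-1})\in L^-$. One can then choose $q_h\in\partial_\infty\HH^2\cap\overline{L^+}$ whose direction seen from $q$ is close to that of the ideal endpoint $-1$ of $\ell$ (e.g.\ $q_h\approx -1.37$, so that $\alpha$ leaves $q$ at angle $\approx 120^{\circ}$): then $\pi$ is the semicircle of center $\approx 0.79$ and Euclidean radius $\approx 0.58$, and \emph{both} $q_h$ and $g^{-1}(q)=\tfrac14(1+\sqrt{-1})$ lie in its exterior, i.e.\ on the same side, even though the $\alpha$-component of $v$ at $q$ is indeed strictly positive. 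The ``integration'' does not give Claim~1 because the sign of the derivative of the signed distance to $\pi$ along $t\mapsto g_{-t}(q)$ need not persist; in this example the orbit crosses $\pi$.

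What is actually true, and is all that is needed against Claim~2, is the weaker assertion that $g^{-1}(q)$ lies strictly outside the metric ball or horoball at $q_h$ through $q$. But that is precisely the content of \eqref{eqn:g-moves-p-away} (together with its extension to horoballs by continuity), which the paper proves by a genuine hyperbolic-trigonometric computation and which is the real work of the lemma; it does not follow from a single derivative at $t=0$. So the repair of Claim~1 imports exactly the paper's key estimate, and once repaired your argument coincides with the paper's in substance. Two further loose ends: you do not treat $q\in\partial_\infty\HH^2$, where $\pi$ is undefined and the fixed-point equation relates ideal points (the paper handles this uniformly via horoballs); and in the fallback homotopy argument, at a putative first crossing $g_{t_0}h=\exp(w)\in\ACP(\ell)=\exp(\CP(\ell))$ the right-translated velocity $v$ must be compared to the tangent cone $\D\exp_w\bigl(T_w\CP(\ell)\bigr)$, which is not $\CP(\ell)$, so Lemma~\ref{lem:parallel-CP-R21} cannot be invoked directly.
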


Note that, in the analogy between the Minkowski and anti-de Sitter settings, Lemma~\ref{lem:parallel-CP-AdS} is slightly stronger than Lemma~\ref{lem:parallel-CP-R21}: for $g\in\exp(\mathrm{SQ}(\ell))$ the intersection $g\ACP(\ell)\cap\ACP(\ell)$ is empty, whereas for $v\in\mathrm{SQ}(\ell)$ the intersection $(\CP(\ell)+v)\cap\CP(\ell)$ is the union of two affine subcones of the stem.
This is the subject of the following remark.

\begin{remark}
In~$\RR^{2,1}$, a crooked plane meets any translate of itself.
In $\AdS$, the crooked plane $\ACP(\ell)$ does not meet its translates $g\ACP(\ell)$ for $g\in\exp(\mathrm{SQ}(\ell))$: these are obtained from $\ACP(\ell)$ by sliding \emph{and tilting} (see Figure~\ref{fig:crookedAdS}).
\end{remark}

\begin{figure}[ht!]
\centering
\labellist
\small\hair 2pt
\endlabellist
\includegraphics[width=8cm]{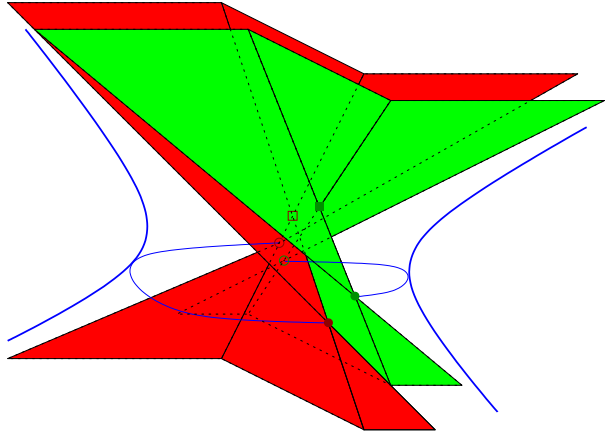}
\caption{Two disjoint $\AdSS$ crooked planes, green and red, in an affine chart of $\PP^3(\RR)$ truncated above and below. The two centers are marked by square dots. The closures of the stems meet the boundary of $\AdS$ in four points marked by round dots. Dual to each center is a copy of~$\HH^2$, part of~whose boundary at infinity of $\AdS$ is also shown (blue ellipse arcs).}
\label{fig:crookedAdS}
\end{figure}

\begin{proof}[Proof of Lemma~\ref{lem:parallel-CP-AdS}]
Let $L^+$ be the closure of the connected component of $(\HH^2\cup\partial_{\infty}\HH^2)\smallsetminus\overline{\ell}$ lying on the positive side of~$\overline{\ell}$ for the transverse orientation, and let $L^-$ be its complement in $\HH^2\cup\partial_{\infty}\HH^2$.
Consider an element $h\in G_0$ (\resp $h'\in G_0$) with a nonrepelling fixed point in $L^+$ (\resp $L^-$).
The lemma says that if $g\in G_0$ is hyperbolic with translation axis orthogonal to~$\ell$, oriented towards $L^+$, then $gh\neq h'$.

Note that for any $p\in L^+\cap\HH^2$ and $p'\in L^-\cap\HH^2$,
\begin{equation}\label{eqn:g-moves-p-away}
d(g\cdot p,p') - d(p,p') \geq \eta := 2\log\cosh\bigg(\frac{\lambda(g)}{2}\bigg) > 0,
\end{equation}
where $\lambda(g)>0$ is the translation length of $g$ in~$\HH^2$; see below for a proof.
This inequality remains true when $p$ is either a point of $L^+\cap\HH^2$ or a horoball centered in $L^+\cap\partial_{\infty}\HH^2$, and $p'$ is either a point of $L^-\cap\HH^2$ or a horoball centerered in $L^-\cap\partial_{\infty}\HH^2$, with $p$ and~$p'$ disjoint: indeed, the (signed) distance function to a given horosphere $q$ of~$\HH^2$ is a Busemann function, of the form $\lim_{t\to +\infty} d(\cdot,q_t)-t$ where $(q_t)_{t\geq 0}$ is a geodesic ray from $q$ to its center in $\partial_{\infty}\HH^2$; by continuity, the inequality $d(g\cdot q_t,q'_t) -  d(q_t,q'_t)\leq -\eta$ passes to the limit for $q_t\in L^+$ and $q_t\in L^-$.
Thus, if $p$ (\resp $p'$) is a singleton or horoball of $\HH^2$ with $h\cdot p\subset p$ (\resp $h'\cdot p'\subset p'$), then
$$d(gh\cdot p, h'\cdot p') \geq d(g\cdot p, p') \geq d(p,p') + \eta .$$
It follows that $gh$ cannot equal~$h'$.
\end{proof}

\begin{proof}[Proof of \eqref{eqn:g-moves-p-away}]
Note that $p', p, g\cdot p$ project in that order to the oriented translation axis $\mathcal{A}$ of~$g$.
Thus, without loss in generality, we may assume that $p\in\ell$.
Let $p''$ be the intersection point of $\ell$ with the geodesic line through $g\cdot p$ and~$p'$.
Choose points $q, q' \in \ell$ so that $q, p, p'', q'$ lie in that order along~$\ell$ (possibly $p=p''$).
We refer to Figure~\ref{fig:proof8-1}.
\begin{figure}[ht!]
\centering
\labellist
\small\hair 2pt
\pinlabel {Axis $\mathcal{A}$ of $g$} at 115 66
\pinlabel {$g\cdot q$} at 12 69
\pinlabel {$q$} at 12 15
\pinlabel {$g\cdot p$} at 57 62
\pinlabel {$p$} at 54 21
\pinlabel {$x$} at 99 23
\pinlabel {$p''$} at 81 24
\pinlabel {$p'$} at 86 2
\pinlabel {$q'$} at 179 16
\pinlabel {$y$} at 99 41
\pinlabel {$\ell$} at -3 8
\endlabellist
\includegraphics[width=10cm]{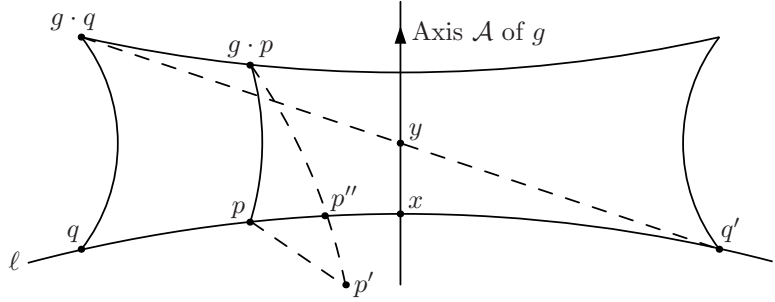}
\caption{Illustration of the proof of \eqref{eqn:g-moves-p-away}}
\label{fig:proof8-1}
\end{figure}
By the triangle inequality,
\begin{eqnarray*}
& & d(g\cdot p,p') - d(p,p')\\
& \geq & d(g\cdot p,p') - d(p,p'') - d(p'',p') = d(g\cdot p,p'') - d(p,p'')\\
& \geq & d(g\cdot p,q') - d(q',p'') - d(p,p'') = d(g\cdot p,q') - d(p,q')\\
& \geq & d(g\cdot q,q') - d(g\cdot q,g\cdot p) - d(p,q') = d(g\cdot q,q') - d(q,q').
\end{eqnarray*}
Define $\{x\}=\ell\cap\mathcal{A}$ and now take $q$ and~$q'$ to be at the same distance $t>0$ from~$x$, on opposite sides.
Let $y$ be the intersection point of $\mathcal{A}$ with the line through $g\cdot q$ and~$q'$.
Then $d(x,y)=\lambda(g)/2$, and so the cosine formula in the right-angled triangle $yxq'$ yields
$$d(g\cdot q,q') = 2 \, d(y,q') = 2 \, \mathrm{arccosh}\big(e^{\eta/2} \cosh t\big),$$
where $\eta:=2\log\cosh(\lambda(g)/2)$.
On the other hand, $d(q,q')=2t$, so it is sufficient to see that $2\, \mathrm{arccosh}(e^{\eta/2}\cosh t)-2t\geq\eta$ for all $t\geq 0$.
This follows from the fact that the function $t\mapsto 2\,\mathrm{arccosh}(e^{\eta/2}\cosh t)-t$ is decreasing on~$\RR_+^{\ast}$, with limit $\eta$ at infinity.
\end{proof}

\subsection{Crooked fundamental domains in $\AdSS$ obtained from strip deformations}\label{subsec:proof-crooked-plane-conj-AdS}

We now deduce Theorem~\ref{thm:AdS-crooked-planes} from Theorem~\ref{thm:main-macro}.

Let $\rho,j\in\Hom(\Gamma,G)$ be the holonomy representations of two convex cocompact hyperbolic structures on a fixed surface.
Assume that $\Gamma^{\rho,j}$ acts properly discontinuously on $\AdS$.
By \cite{kasPhD}, up to switching $j$ and~$\rho$ we may assume that $j$ is ``uniformly longer'' than~$\rho$ in the sense that \eqref{eqn:propcritAdS} is satisfied.
This implies \cite{thu86a} that the surface is not compact.
Note that switching $j$ and~$\rho$ amounts to conjugating the $\Gamma^{\rho,j}$-action on $\AdS=G_0$ by the orientation-reversing isometry $g\mapsto g^{-1}$, which maps left $\AdSS$ crooked planes to right $\AdSS$ crooked planes.
Assuming that $j$ is ``uniformly longer'' than~$\rho$, we shall prove the existence of a fundamental domain in $\AdS$ for $\Gamma^{\rho,j}$ that is bounded by finitely many \emph{left} $\AdSS$ crooked planes.

By Theorem~\ref{thm:main-macro}, we can realize~$j$ as a strip deformation of~$\rho$ supported on some collection $\mathscr{E}$ of geodesic arcs $\underline{\alpha}$ on the hyperbolic surface $S := \rho(\Gamma)\backslash\HH^2$, which cut the surface into topological disks.
We proceed as in the infinitesimal case, again using the notation of Section~\ref{sec:formalism}.
Similarly to Observation~\ref{obs:realize-strips}, the strip deformation taking $\rho$ to~$j$ is described by an assignment $\Phi : \widetilde{\mathscr{T}}\to G_0$ of motions to the tiles satisfying the following:
\begin{itemize}
  \item $\Phi$ is $(\rho,(\rho,j))$-equivariant: for all $\gamma\in\Gamma$ and $\delta\in\widetilde{\mathscr{T}}$,
  $$\Phi(\rho(\gamma)\cdot\delta) = j(\gamma) \, \Phi(\delta) \, \rho(\gamma)^{-1} ;$$
  \item for any transversely oriented geodesic $\tilde{\alpha} \in \pm\widetilde{\mathscr{E}}$, bordering tiles $\delta, \delta'$ on the negative and positive sides respectively, the relative displacement
  $$\Psi(\tilde{\alpha}) := \Phi(\delta)^{-1}\Phi(\delta') \in G_0$$
  is hyperbolic with translation axis orthogonal to~$\tilde{\alpha}$, oriented in the~po\-sitive direction; equivalently, $\Psi(\tilde{\alpha}) \in \exp (\SQ(\tilde{\alpha}))$ by Lemma~\ref{lem:inf-transl-in-R21}.(3').
  The translation length of~$\Psi(\tilde{\alpha})$ is the width of the strip to be inserted in~$S$ along the projection of~$\tilde{\alpha}$.
\end{itemize}
To any $\tilde{\alpha}\in\widetilde{\mathscr{E}}$ we associate the $\AdSS$ crooked plane $\mathsf{D}_{\tilde{\alpha}} := g_{\tilde{\alpha}} \ACP(\tilde{\alpha})$ where
\begin{equation}\label{eqn:averageAdS}
g_{\tilde{\alpha}} := \Phi(\delta)\,\sqrt{\Phi(\delta)^{-1}\,\Phi(\delta')}=\Phi(\delta')\,\sqrt{\Phi(\delta')^{-1}\,\Phi(\delta)} .
\end{equation}
Here the square root of a hyperbolic element denotes the hyperbolic element with the same oriented axis but half the translation length.
One could think of $g_{\tilde{\alpha}}$ as the motion of the edge $\tilde{\alpha}$ under the strip deformation, which we take to be the average of the motions of the adjacent tiles.
Since $\Phi$ is $(\rho,(\rho,j))$-equivariant, $\Psi$ is $(\rho,(\rho,\rho))$-equivariant and
$$\mathsf{D}_{\rho(\gamma)\cdot\tilde{\alpha}} = j(\gamma) \, \mathsf{D}_{\tilde{\alpha}} \, \rho(\gamma)^{-1}$$
for all $\gamma\in\Gamma$ and $\tilde{\alpha}\in\widetilde{\mathscr{E}}$.
We claim that the $\mathsf{D}_{\tilde{\alpha}}$, for $\tilde{\alpha}\in\widetilde{\mathscr{E}}$, are pairwise disjoint.
Indeed, consider two adjacent edges $\tilde{\alpha},\tilde{\alpha}'$ bounding tiles $\delta,\delta',\delta''$ as in Figure~\ref{fig:diagram-proof-CP}.
Transversely orient $\tilde{\alpha}$ from $\delta$ to~$\delta'$, and $\tilde{\alpha}'$ from $\delta'$ to~$\delta''$ so that the positive half-plane of $\tilde{\alpha}$ in~$\HH^2$ (\ie the connected component of $\HH^2\smallsetminus\tilde{\alpha}$ lying on the positive side of~$\tilde{\alpha}$ for the transverse orientation) contains that of~$\tilde{\alpha}'$.
Then
$$\Phi(\delta')^{-1} g_{\tilde{\alpha}} = \sqrt{\Psi(\tilde{\alpha})^{-1}} = \sqrt{\Psi(-\tilde{\alpha})} \quad\mathrm{and}\quad \Phi(\delta')^{-1}g_{\tilde{\alpha}'} = \sqrt{\Psi(\tilde{\alpha}')} .$$
Since $\Psi(-\tilde{\alpha})\in\exp(\SQ(-\tilde{\alpha}))$ and $\Psi(\tilde{\alpha}')\in\exp(\SQ(\tilde{\alpha}'))$, Proposition~\ref{prop:disjoint-CP-AdS}~implies
$$\Phi(\delta')^{-1}g_{\tilde{\alpha}'}\overline{\ACHS^+(\tilde{\alpha}')} \subset \Phi(\delta')^{-1}g_{\tilde{\alpha}}\ACHS^-(-\tilde{\alpha}) ,$$
hence
\begin{equation}\label{eqn:AdS-nesting1}
g_{\tilde{\alpha}'}\overline{\ACHS^+(\tilde{\alpha}')} \subset g_{\tilde{\alpha}}\ACHS^-(-\tilde{\alpha}) = g_{\tilde{\alpha}} \ACHS^+(\tilde{\alpha}) .
\end{equation}
This shows in particular that the crooked planes $\mathsf{D}_{\tilde{\alpha}}$ and $\mathsf{D}_{\tilde{\alpha}'}$ are disjoint whenever $\tilde{\alpha}, \tilde{\alpha}'$ border the same tile.
As in the Minkowski setting, induction and \eqref{eqn:AdS-nesting1} allow us to conclude that for any edges $\tilde{\alpha}, \tilde{\alpha}' \in \pm \widetilde{\mathscr{E}}$, transversely oriented so that the positive half-plane of $\tilde{\alpha}$ in~$\HH^2$ contains that of~$\tilde{\alpha}'$,
\begin{equation}\label{eqn:AdS-nesting2}
\overline{\ACHS^+(\tilde{\alpha}')} \subset \ACHS^+(\tilde{\alpha}).
\end{equation}
A candidate fundamental domain $\mathsf{R}$ is then defined as the intersection of the crooked half-spaces corresponding to the half-planes defining a fundamental domain in $\HH^2$ for $\rho(\Gamma)$.
The proof concludes by showing that $\Gamma^{\rho, j}\cdot\mathsf{R}=\AdS$.
This is implied by the following analogue of Lemma~\ref{lem:explode-in-p}, which shows that the crooked planes $\mathsf{D}_{\tilde{\alpha}}$, for $\tilde{\alpha}\in\widetilde{\mathscr{E}}$, do not accumulate on any set.


\begin{lemma}\label{lem:explode-in-p-AdS}
For any $p\in\HH^2$ and any sequence $(\tilde{\alpha}_n)\in\widetilde{\mathscr{E}}^{\NN}$ going to infinity,
$$\inf\big\{ d(p,h_n\cdot p) ~|~ h_n\in\mathsf{D}_{\tilde{\alpha}_n}\big\} \underset{\scriptstyle n\rightarrow +\infty}{\longrightarrow} +\infty .$$
\end{lemma}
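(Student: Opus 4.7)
The plan is to mimic the proof of Lemma~\ref{lem:explode-in-p}, promoting the infinitesimal Killing-field argument to its group-element counterpart by replacing linear decompositions with compositions of isometries and by replacing the ``nonnegative component along a geodesic'' estimate with the displacement bound \eqref{eqn:g-moves-p-away} established inside the proof of Lemma~\ref{lem:parallel-CP-AdS}. As a first step, I reduce as in the Minkowski case to the setting where there is a sequence $(\delta_i)_{i\in\NN}$ of pairwise distinct tiles in $\widetilde{\mathscr{T}}$ with $p\in\delta_0$ and each $\tilde{\alpha}_i$ adjacent to both $\delta_{i-1}$ and $\delta_i$; I transversely orient $\tilde{\alpha}_i$ from $\delta_{i-1}$ to $\delta_i$, which makes $\Psi(\tilde{\alpha}_i)\in\exp(\SQ(\tilde{\alpha}_i))$ a hyperbolic isometry of~$\HH^2$ with axis perpendicular to~$\tilde{\alpha}_i$ and translation direction toward~$\delta_i$, of translation length at least $\lambda_0:=\min\{m_\alpha : \underline{\alpha}\in\mathscr{E}\}>0$.

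Given $h_n\in\mathsf{D}_{\tilde{\alpha}_n}$, I write $h_n=g_{\tilde{\alpha}_n} k_n$ with $k_n\in\ACP(\tilde{\alpha}_n)$. Telescoping $\Psi(\tilde{\alpha}_i)=\Phi(\delta_{i-1})^{-1}\Phi(\delta_i)$ and using $g_{\tilde{\alpha}_n}=\Phi(\delta_{n-1})\sqrt{\Psi(\tilde{\alpha}_n)}$ from \eqref{eqn:averageAdS}, one obtains
$$h_n \,=\, \Phi(\delta_0)\,\Psi(\tilde{\alpha}_1)\,\Psi(\tilde{\alpha}_2)\cdots\Psi(\tilde{\alpha}_{n-1})\,\sqrt{\Psi(\tilde{\alpha}_n)}\,k_n.$$
Setting $p_0:=\Phi(\delta_0)^{-1}\cdot p$ (a fixed point independent of~$n$), the claim reduces to $d\bigl(p_0, \Psi(\tilde{\alpha}_1)\cdots\Psi(\tilde{\alpha}_{n-1})\sqrt{\Psi(\tilde{\alpha}_n)}\,k_n\cdot p\bigr)\to +\infty$. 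The element $k_n$ admits a nonrepelling fixed point $q_n$ in the closure of~$\tilde{\alpha}_n$; let $B_n$ be the singleton $\{q_n\}$ if $q_n\in\HH^2$, or a small horoball at $q_n$ otherwise. In every case $k_n\cdot B_n\subseteq B_n$, so $k_n\cdot p$ lies within distance $d(p,B_n)$ of~$B_n$; this is the group-theoretic analogue of the ``nonnegative Killing-field component toward $q_n$'' used for~$Y_n$ in the Minkowski case.

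The heart of the proof is an inductive application of the estimate extracted from the proof of Lemma~\ref{lem:parallel-CP-AdS}: for a hyperbolic $g\in G_0$ whose oriented translation axis crosses a geodesic $\ell$ from the side of a horoball/point~$B$ (with $hB\subseteq B$) to the side of a horoball/point~$B'$ (with $h'B'\subseteq B'$),
$$d(ghB,\,h'B') \,\geq\, d(B,B') \,+\, 2\log\cosh(\lambda(g)/2).$$
Applying this with $\ell=\tilde{\alpha}_i$ and $g=\Psi(\tilde{\alpha}_i)$ (or $g=\sqrt{\Psi(\tilde{\alpha}_n)}$ at the innermost step), working from the inside out and choosing reference horoballs/points $B_0=\{p_0\}\subset\delta_0,\,B_1,\dots,B_{n-1},B_n$ inside the successive tiles, each interior index contributes at least $2\log\cosh(\lambda_0/2)$ to the total displacement, yielding
$$d(p_0,h_n\cdot p) \,\geq\, (n-C_1)\cdot 2\log\cosh(\lambda_0/2) \,-\, C_2 \,\longrightarrow\, +\infty,$$
for constants $C_1,C_2>0$ depending only on $p$ and the strip deformation data.

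The main obstacle I anticipate is the bookkeeping for the horoballs $B_i$: in the Minkowski case, linearity of Killing fields let the authors use a single reference geodesic $\ell_n$ from $p$ to~$q_n$ and simply check that it crosses each $\tilde{\alpha}_i$ in the positive direction, with a uniform lower bound on the crossing angles (analogous to Proposition~\ref{prop:noslant}) ensuring uniformly positive per-edge contributions. Here, because we are composing isometries, I must specify each intermediate $B_i$ so that it lies on the correct side of both $\tilde{\alpha}_i$ and $\tilde{\alpha}_{i+1}$, and verify an angle lower bound between the axis of $\Psi(\tilde{\alpha}_i)$ and the geodesic realizing $d(B_{i-1},B_i)$ so that the displacement estimate applies with a uniformly positive gain. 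Both requirements should follow from the fact that the tiles $\delta_i$ lie in finitely many $\rho(\Gamma)$-orbits, so their local geometry is uniformly controlled, exactly as in the Minkowski argument.
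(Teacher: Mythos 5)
Your outline is on the right track and follows essentially the same strategy as the paper's proof: reduce to a chain of edges crossed by one tile each, replace the variable $k_n\cdot p$ by the nonrepelling point/horoball $B_n$ of $k_n$ on $\tilde{\alpha}_n$, and iterate the displacement estimate \eqref{eqn:g-moves-p-away} to get a lower bound growing linearly in~$n$. However, the two ``obstacles'' you anticipate are not real, and the paper handles the remaining bookkeeping in a slicker way than you propose; it is worth spelling this out.

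First, the angle worry is misplaced: \eqref{eqn:g-moves-p-away} gives a gain $\eta=2\log\cosh(\lambda(g)/2)$ that is completely \emph{independent of any angle}, provided only that the two points (or horoballs) lie on opposite sides of the geodesic $\ell$ orthogonal to the axis of~$g$. This is precisely the feature of the AdS setting that replaces the cosine/sine-weighting that was needed in Lemma~\ref{lem:explode-in-p} for Minkowski space, where the contribution per crossing really does degenerate with the angle; here the per-edge contribution is uniform from the start, which is why no analogue of Proposition~\ref{prop:noslant} is invoked. Second, the side-preservation you worry about is elementary: $\Psi(\tilde{\alpha}_i)$ is hyperbolic with axis perpendicular to $\tilde{\alpha}_i$ and translates toward the far side, hence maps the far half-plane of $\tilde{\alpha}_i$ strictly into itself; since the far half-plane of $\tilde{\alpha}_i$ is contained in the far half-plane of $\tilde{\alpha}_j$ for $j<i$, an easy induction shows that $\Psi(\tilde{\alpha}_{i+1})\cdots\sqrt{\Psi(\tilde{\alpha}_n)}\cdot B_n$ always sits on the correct side of~$\tilde{\alpha}_i$, and the iteration goes through with no compactness input. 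The paper sidesteps even this: instead of prescribing intermediate reference points ``inside the tiles,'' it first proves the two-sided inequality \eqref{eqn:g-alpha-alpha'} for adjacent edges (two applications of \eqref{eqn:g-moves-p-away}, one for each square root $\sqrt{\Psi}$), and then defines the intermediate $p_i$ as the crossing points of the single geodesic segment from $g_{\tilde{\alpha}_0}\cdot p_0$ to $g_{\tilde{\alpha}_n}\cdot p_n$ with the moved edges $g_{\tilde{\alpha}_i}\cdot\tilde{\alpha}_i$; this automatically places each $p_i$ on the right side and turns the chain of triangle inequalities into a telescoping identity. Your telescoping of $h_n=\Phi(\delta_0)\Psi(\tilde{\alpha}_1)\cdots\Psi(\tilde{\alpha}_{n-1})\sqrt{\Psi(\tilde{\alpha}_n)}k_n$ is the same identity the paper uses implicitly through $g_{\tilde{\alpha}}^{-1}g_{\tilde{\alpha}'}=\sqrt{\Psi(\tilde{\alpha})}\sqrt{\Psi(\tilde{\alpha}')}$, so the underlying computation agrees; the difference is purely organizational.
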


\begin{proof}
Let $\tilde{\alpha}, \tilde{\alpha}' \in \widetilde{\mathscr{E}}$ be distinct edges bounding a common tile~$\delta'$, transversely oriented as in Figure~\ref{fig:diagram-proof-CP}.
By construction, $g_{\tilde{\alpha}}^{-1}g_{\tilde{\alpha}'}=\sqrt{\Psi(\tilde{\alpha})}\sqrt{\Psi(\tilde{\alpha}')}$.
Applying \eqref{eqn:g-moves-p-away} twice, we see that for any $p \in \tilde{\alpha}$ and $p' \in \tilde{\alpha}'$,
\begin{equation}\label{eqn:g-alpha-alpha'}
d(g_{\tilde{\alpha}} \cdot p, g_{\tilde{\alpha}'} \cdot p') \geq d(p,p') + \eta(\tilde{\alpha}) + \eta(\tilde{\alpha}') ,
\end{equation}
where $\eta(\tilde{\alpha}) := 2 \log \cosh(\lambda(\sqrt{\Psi(\tilde{\alpha})})/2) > 0$ depends only on the width of the strip along~$\tilde{\alpha}$.
The inequality remains true when $p$ is either a point of~$\tilde{\alpha}$ or a horoball centered at an endpoint of~$\tilde{\alpha}$, and $p'$ is either a point of~$\tilde{\alpha}'$ or a horoball centerered at an endpoint of~$\tilde{\alpha}'$, with $p$ and~$p'$ disjoint.

To prove the lemma, it is enough to treat the case that $(\tilde{\alpha}_n)\in\widetilde{\mathscr{E}}^{\NN}$ is a sequence of distinct edges, each separated from the next by just one tile, and that $p=p_0\in\tilde{\alpha}_0$.
For $n\geq 1$, let $p_n$ be any point of~$\tilde{\alpha}_n$ or any horoball centered at an endpoint of~$\tilde{\alpha}_n$, disjoint from~$p_0$.
The shortest path from $g_{\tilde{\alpha}_0} \cdot p_0$ to $g_{\tilde{\alpha}_n} \cdot p_n$ intersects each $g_{\tilde{\alpha}_i}\cdot \tilde{\alpha}_i$ at a point $g_{\tilde{\alpha}_i}\cdot p_i$.
Applying \eqref{eqn:g-alpha-alpha'} to each subsegment, we find
\begin{eqnarray*}
d(p_0,p_n) & \leq & \sum_{i=1}^n\, d(p_{i-1},p_i) \;\;\leq\;\; \sum_{i=1}^n\, \big( d(g_{\tilde{\alpha}_{i-1}}\cdot p_{i-1}, g_{\tilde{\alpha}_i}\cdot p_i)-2\eta_0 \big)\\
&=& d(g_{\tilde{\alpha}_0}\cdot p_0, g_{\tilde{\alpha}_n}\cdot p_n) - 2n\eta_0,
\end{eqnarray*}
where $\eta_0>0$ is the smallest of the finitely many values $\eta(\tilde{\alpha})$. 
Up to conjugation we may assume $g_{\tilde{\alpha}_0} = e$.
Let $h = g_{\tilde{\alpha}_n} h' \in \mathsf{D}_{\tilde{\alpha}_n}$, where $h' \in \ACP(\tilde{\alpha}_n)$.
By definition of~$\ACP(\tilde{\alpha}_n)$, there is a singleton $p_n\subset\tilde{\alpha}_n$ or a horoball $p_n$ centered at an endpoint of~$\tilde{\alpha}_n$ such that $h'\cdot p_n\subset p_n$.
Then
$$d(p_0, h \cdot p_n) \geq d(p_0, g_{\tilde{\alpha}_n} \cdot p_n) \geq d(p_0, p_n) + 2n \eta_0 .$$
By the triangle inequality, $d(p_n, h \cdot p_n) \geq 2n\eta_0$.
The result follows.
\end{proof}

\begin{remark}
In \eqref{eqn:averageAdS}, we could have taken
$$g_{\tilde{\alpha}} := \Phi(\delta)\,\big(\Phi(\delta)^{-1}\,\Phi(\delta')\big)^t=\Phi(\delta')\,\big(\Phi(\delta')^{-1}\,\Phi(\delta)\big)^{1-t}\in G_0$$
for an arbitrary fixed $t\in (0,1)$, not necessarily $t=1/2$; the proof would have worked the same way.
(Here $g^t = \exp(t\log g)$.)
\end{remark}

\appendix
\section{Realizing the zero cocycle}\label{sec:remarks}

This appendix is a complement to the discussion of Section~\ref{subsec:abcd}, whose notation and setup we continue with (in particular, we refer to Figure~\ref{fig:claim3-1}).
It is not needed for the proofs of Theorems \ref{thm:main} and~\ref{thm:main-macro}.

In Section~\ref{subsec:abcd} we gave a realization, through the map $\mathscr{L}$ of \eqref{eqn:map-L}, of the zero cocycle as a linear combination of the infinitesimal strip deformation maps $\psi_{\alpha},\psi_{\alpha'},\psi_{\beta_i}$ by choosing specific geodesic representatives, waists, and widths.
In this appendix, we keep the same geodesic representatives (whose extensions to $\PP(\RR^{2,1})$ intersect in triples as in Figure~\ref{fig:claim3-1}) but vary the waists; we determine all possible realizations of the zero cocycle supported on the arcs $\alpha, \alpha', \beta_1, \beta_2, \beta_3, \beta_4$ and discuss their geometric significance in relation with Conjecture~\ref{conj}.

\subsection{Generalized infinitesimal strip deformations}

Let $\underline{\Delta}$ be a geodesic hyperideal triangulation of~$S$, with set of edges~$\mathscr{E}$.
Recall the notation $\Psi(\pm\widetilde{\mathscr{E}},\g)$ from Section~\ref{subsec:ookkee}.

\begin{definition} \label{def:gen-strip}
A relative motion map $\psi\in \Psi(\pm\widetilde{\mathscr{E}},\g)$ is called a \emph{generalized infinitesimal strip deformation} if $\psi(\tilde{\alpha})\in \spa(\tilde{\alpha})$ for any transversely oriented edge $\tilde{\alpha} \in \pm\widetilde{\mathscr{E}}$.
The \emph{support} of~$\psi$ is the set of arcs $\underline{\alpha}\subset S$ such that $\psi$ is nonzero on the lifts to~$\HH^2$ of $\underline{\alpha}$.
We also refer to the cohomology class in $T_{[\rho]}\T$ induced by $\psi$ as a generalized infinitesimal strip deformation.
\end{definition}

Infinitesimal strip deformations in the sense of Definition~\ref{def:inf-strip-deform} are generalized infinitesimal strip deformations for which $\psi(\tilde{\alpha})$ lies in one particular spacelike quadrant of the timelike plane $\spa(\tilde{\alpha}) \subset \RR^{2,1}$. 
We will call these strip deformations \emph{positive} (and their opposites \emph{negative}) to distinguish them among generalized strip deformations. 
Generalized infinitesimal strip deformations can also be neither positive nor negative: if for instance $\psi(\tilde{\alpha})\in\g$ is timelike, then the relative motion of the two tiles adjacent to $\tilde{\alpha}$ is an infinitesimal rotation centered at a point of~$\tilde{\alpha}$.

Generalized infinitesimal strip deformations supported on a single arc~$\underline{\alpha}$ form a linear $2$-plane in $H^1_{\rho}(\Gamma,\g)\simeq T_{[\rho]}\T$.

\subsection{The point $w\in\RR^{2,1}$ associated with a realization of the zero cocycle}

We now work in the setting of Section~\ref{subsec:abcd}: the hyperideal triangulations $\underline{\Delta}, \underline{\Delta}'$ differ by a single diagonal exchange and have a common refinement $\underline{\Delta}''$.
We have four spacelike vectors $v_1,v_2,v_3,v_4\in\RR^{2,1}$, scaled so that
\begin{equation}\label{eqn:norm-v_i-bis}
v_1 + v_3 = v_2 + v_4 ,
\end{equation}
and our chosen geodesic representatives $\underline{\alpha},\underline{\alpha}',\underline{\smash{\beta}}_i\in\underline{\Delta}''$ lift to $\tilde{\alpha},\tilde{\alpha}',\tilde{\beta}_i$ with
$$\left \{ \begin{array}{lcl}
  \tilde{\alpha} & = & \HH^2 \cap (\RR_+ v_1 + \RR_+ v_3) ,\\
  \tilde{\alpha}' & = & \HH^2 \cap (\RR_+ v_2 + \RR_+ v_4) ,\\
  \tilde{\beta}_i  & = & \HH^2 \cap (\RR_+ v_i + \RR_+ v_{i+1})
\end{array}\right .$$
for all $1\leq i\leq 4$ (with the convention that $v_5=v_1$).
For simplicity, we henceforth assume that $\underline{\smash{\beta}}_1,\dots,\underline{\smash{\beta}}_4$ are all distinct in the quotient surface $S$.
Recall that $\tilde{\alpha}={e}_1 \cup {e}_3$ and $\tilde{\alpha}'={e}_2\cup {e}_4$.
All edges $e_i$ and $\tilde{\beta}_i$ carry transverse orientations shown in Figure~\ref{fig:claim3-1}.

For any $w\in\RR^{2,1}$, we define a $(\rho, 0)$-equivariant map $\varphi_w : \widetilde{\mathscr{T}}''\to\g$ supported on the $\rho(\Gamma)$-orbits of the ``small'' tiles $\delta_1, \delta_2, \delta_3, \delta_4$ by
\begin{equation}\label{eqn:phi_w}
\varphi_w (\delta_i) := w\wedge (v_i\wedge v_{i+1})
\end{equation}
for all $1\leq i \leq 4$, where $\wedge$ is the Minkowski cross-product of Section~\ref{subsec:Killing-fields}.
As in Section~\ref{subsec:ookkee}, any $(\rho, 0)$-equivariant map $\varphi : \widetilde{\mathscr{T}}''\to\g$ defines a relative motion map $\psi : \widetilde{\mathscr{E}}''\rightarrow\g$ given by
$$\psi({e})=\varphi(\delta')-\varphi(\delta)$$
for any tiles $\delta,\delta'$ adjacent to an edge $e\in\pm\widetilde{\mathscr{E}}''$ which is transversely oriented from $\delta$ to~$\delta'$.

\begin{lemma}\label{lem:descript-phi}
Let $\varphi : \widetilde{\mathscr{T}}''\to\g$ be a $(\rho, 0)$-equivariant map with $\varphi=0$ outside the $\rho(\Gamma)$-orbits of $\delta_1,\delta_2,\delta_3,\delta_4$.
The associated map $\psi : \widetilde{\mathscr{E}} \to \g$ is a generalized infinitesimal strip deformation (Definition~\ref{def:gen-strip}) if and only if $\varphi=\varphi_w$ for some (unique) $w\in\RR^{2,1}$.
\end{lemma}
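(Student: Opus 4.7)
The plan is to identify both sides of the claimed bijection as three-dimensional real vector spaces and check that the linear map $w\mapsto\varphi_w$ is the isomorphism between them. The source is $\RR^{2,1}$; the target is the subspace $\mathcal{V}$ of $(\rho,0)$-equivariant maps $\varphi$ vanishing outside the $\rho(\Gamma)$-orbits of $\delta_1,\dots,\delta_4$ whose associated $\psi$ is a generalized infinitesimal strip deformation.

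First I would verify $\varphi_w\in\mathcal{V}$ for every $w$, using the Lorentzian triple-product identity $a\wedge(b\wedge c)=\langle a,b\rangle c-\langle a,c\rangle b$ (easily checked on a basis). This yields
\[
\varphi_w(\delta_i)=\langle w,v_i\rangle v_{i+1}-\langle w,v_{i+1}\rangle v_i\,\in\,\spa(v_i,v_{i+1})=\spa(\tilde{\beta}_i),
\]
hence $\psi(\tilde{\beta}_i)=\varphi_w(\delta_i)\in\spa(\tilde{\beta}_i)$. For the halves of $\tilde{\alpha}$, direct expansion and \eqref{eqn:norm-v_i-bis} give
\[
\psi(e_1)=\varphi_w(\delta_1)-\varphi_w(\delta_4)=w\wedge\big(v_1\wedge(v_2+v_4)\big)=w\wedge(v_1\wedge v_3)\,\in\,\spa(\tilde{\alpha}),
\]
and symmetrically $\psi(e_3)=w\wedge(v_3\wedge v_1)=-\psi(e_1)$, which is exactly the sign required for $\psi$ to descend to a single value on the long edge $\tilde{\alpha}$ under the refinement map $\mathscr{I}$, given the transverse orientations of Figure~\ref{fig:claim3-1}. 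The case of $\tilde{\alpha}'$ with $e_2,e_4$ is identical.

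Injectivity of $w\mapsto\varphi_w$ is immediate from the formula above: if $\varphi_w=0$, then $\langle w,v_i\rangle=0$ for $i=1,\dots,4$, and since $v_1,v_2,v_3$ are linearly independent (the quadrilateral $\widetilde{R}$ is nondegenerate) this forces $w=0$. To conclude I would check $\dim\mathcal{V}\leq 3$. Parametrize $\varphi(\delta_i)=a_iv_i+b_iv_{i+1}$; this already encodes $\psi(\tilde{\beta}_i)\in\spa(\tilde{\beta}_i)$ in eight coordinates. Using $v_4=v_1+v_3-v_2$, the conditions $\psi(e_j)\in\spa(\tilde{\alpha})$ for $j=1,3$ and $\psi(e_j)\in\spa(\tilde{\alpha}')$ for $j=2,4$ translate into the four relations $a_i+b_{i+1}=0$ (indices mod~$4$). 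The descent condition $\psi(e_1)+\psi(e_3)=0$ then collapses, modulo those four, to the single alternating-sum identity $b_1-b_2+b_3-b_4=0$; the analogous condition on $\tilde{\alpha}'$ yields the same equation. These five relations are manifestly independent (the first four express each $a_i$ in terms of a $b_j$; the fifth only constrains the $b_j$'s), so they cut out a three-dimensional subspace, which by the injectivity above must coincide with the image of $w\mapsto\varphi_w$.

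The main nuisance I expect is sign bookkeeping: the transverse orientations of $e_1,\dots,e_4$ from Figure~\ref{fig:claim3-1} determine the signs in $\mathscr{I}$, hence in the descent condition $\psi(e_1)=\pm\psi(e_3)$ and in the final alternating-sum equation; a misplaced sign would mask the clean statement that $\mathcal{V}$ is a hyperplane in the eight-parameter space. Once these orientations are tracked carefully, the entire argument reduces to the triple-product identity $a\wedge(b\wedge c)=\langle a,b\rangle c-\langle a,c\rangle b$ and the normalization $v_1+v_3=v_2+v_4$.
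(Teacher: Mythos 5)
Your proof is correct and follows essentially the same route as the paper's: verify $\varphi_w$ satisfies the required spanning/descent conditions (the paper uses the same computations $\psi(\tilde\beta_i)=w\wedge(v_i\wedge v_{i+1})$ and $\psi(e_i)=w\wedge(v_i\wedge v_{i+2})$, without spelling out the triple-product expansion), check injectivity, and count that the relevant space of $(\rho,0)$-equivariant maps has dimension $\leq 3$. Your eight-parameter bookkeeping with five independent linear constraints is just a notational variant of the paper's elimination to four parameters $a_i$ subject to $a_1+a_3=a_2+a_4$.
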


In this case the support of the generalized infinitesimal strip deformation is contained in $\{\underline{\alpha}, \underline{\alpha}', \underline{\smash{\beta}}_1, \underline{\smash{\beta}}_2, \underline{\smash{\beta}}_3, \underline{\smash{\beta}}_4\}$.
Note that Lemma~\ref{lem:descript-phi} holds regardless of how the other geodesic representatives $\underline{\smash{\eta}}$ for $\eta\in (\Delta\cap\Delta')\smallsetminus\{ \beta_1,\beta_2,\beta_3,\beta_4\}$ are chosen.

\begin{proof}
Since $\varphi=0$ outside the $\rho(\Gamma)$-orbits of the~$\delta_i$, we have $\psi=0$ outside the $\rho(\Gamma)$-orbits of the $\tilde{\beta}_i$ and~$e_i$ for $1\leq i\leq 4$.
By Definition~\ref{def:gen-strip}, the fact that $\psi$ is a generalized infinitesimal strip deformations is equivalent to
\begin{equation}\label{eqn:prop-psi}
\left \{ \begin{array}{l}
  \psi(\tilde{\beta}_i) \in \spa(v_i,v_{i+1}),\\
  \psi({e}_i) \in \spa(v_i,v_{i+2}) ,\\
  \psi({e}_i) = - \psi({e}_{i+2})
\end{array}\right .
\end{equation}
for all~$i$.
We first check that the space of $\rho$-equivariant maps $\varphi : \widetilde{\mathscr{T}}''\to\g$ for which $\psi$ satisfies \eqref{eqn:prop-psi} has dimension $\leq 3$.
By construction, $\psi(\tilde{\beta}_i)=\varphi(\delta_i)$ and $\psi({e}_i)=\varphi(\delta_i) - \varphi(\delta_{i-1})$.
If $\psi$ satisfies \eqref{eqn:prop-psi}, then we may write
$$\varphi(\delta_i) = a_i v_i - b_{i+1} v_{i+1} \in \spa(v_i,v_{i+1})$$
for all~$i$, where $a_1,b_1,\dots,a_4,b_4\in\RR$.
Since \eqref{eqn:norm-v_i-bis} is the only relation between $v_1,v_2,v_3,v_4$, the condition $\varphi(\delta_i)-\varphi(\delta_{i-1})\in\spa(v_i,v_{i+2})$ is equivalent to $b_{i+1}=a_{i-1}$, and so we may eliminate the $b_i$ and write
$$\varphi(\delta_i) = a_i v_i - a_{i-1} v_{i+1}$$
where $a_1,a_2,a_3,a_4\in\RR$.
The condition $\psi({e}_i)=-\psi({e}_{i+2})$ is equivalent to $\varphi(\delta_1) + \varphi(\delta_3) = \varphi(\delta_2) + \varphi(\delta_4)$, which amounts to
$$(a_1+a_3)\,v_1 - (a_2+a_4)\,v_2 + (a_3+a_1)\,v_3 - (a_4+a_2)\,v_4 = 0 ,$$
\ie $a_1+a_3=a_2+a_4$ by \eqref{eqn:norm-v_i-bis}.
The space of quadruples $(a_1,a_2,a_3,a_4)$ satisfying this condition has dimension~$3$, as announced.

The map $w\mapsto\varphi_w$ is linear and injective, for its kernel in~$\RR^{2,1}$ is contained in $\bigcap_{i=1}^4 \spa(v_i\wedge v_{i+1}) = \{ 0\}$.
Therefore, it only remains to see that for any $w\in\RR^{2,1}$, the map $\psi$ associated with~$\varphi_w$ satisfies \eqref{eqn:prop-psi}.
We have 
\begin{equation}\label{eqn:psi-beta_i}
\psi(\tilde{\beta}_i)=\varphi_w(\delta_i)-0=w\wedge (v_i \wedge v_{i+1}) \in \spa(v_i,v_{i+1}).
\end{equation}
Using \eqref{eqn:norm-v_i-bis} and the skew-symmetry of $\wedge$, we also have
\begin{equation}\label{eqn:psi-e_i}
\psi({e}_i) = \varphi_w(\delta_i)-\varphi_w(\delta_{i-1})=w\wedge (v_i\wedge (v_{i+1}+v_{i-1}))= w\wedge (v_i\wedge v_{i+2}),
\end{equation}
hence $\psi({e}_i)\in\spa(v_i,v_{i+2})$ and $\psi({e}_i)=-\psi({e}_{i+2})$.
\end{proof}

\subsection{The case of timelike~$w$}

We now consider the map $\varphi_w$ of \eqref{eqn:phi_w} when $w\in\RR^{2,1}$ is timelike.
We see $\HH^2$ as a hyperboloid in~$\RR^{2,1}$ as in \eqref{eqn:H2subsetR21}.

\begin{lemma}\label{lem:spacelike}
If $w\in\RR^{2,1}$ is timelike, then the map $\psi : \widetilde{\mathscr{E}}\to\g$ associated with~$\varphi_w$ is
a generalized infinitesimal strip deformation whose support is exactly $\{\underline{\alpha}, \underline{\alpha}', \underline{\smash{\beta}}_1, \underline{\smash{\beta}}_2, \underline{\smash{\beta}}_3, \underline{\smash{\beta}}_4\}$.
The six (nonzero) vectors $\psi(\tilde{\alpha}), \psi(\tilde{\alpha}'), \psi(\tilde{\beta}_i)$ are all spacelike, \ie correspond to either positive or negative infinitesimal strip deformations.
The corresponding waists on $\tilde{\alpha}, \tilde{\alpha}', \tilde{\beta}_i$ are the respective orthogonal projections of $\HH^2\cap\RR w$ in~$\HH^2$.
\end{lemma}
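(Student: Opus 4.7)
My plan has three stages, corresponding to the three assertions of the lemma.

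First, the fact that $\psi$ is a generalized infinitesimal strip deformation with support contained in $\{\underline{\alpha}, \underline{\alpha}', \underline{\smash{\beta}}_1, \ldots, \underline{\smash{\beta}}_4\}$ is immediate from Lemma~\ref{lem:descript-phi}, which I simply invoke.

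Second, I will show that the six values $\psi(\tilde{\alpha}), \psi(\tilde{\alpha}'), \psi(\tilde{\beta}_i)$ are all nonzero and spacelike. Using the formulas \eqref{eqn:psi-beta_i} and \eqref{eqn:psi-e_i} established in the proof of Lemma~\ref{lem:descript-phi}, one has
\begin{align*}
\psi(\tilde{\beta}_i) &= w \wedge (v_i \wedge v_{i+1}),\\
\psi(\tilde{\alpha}) &= \psi(e_1) = w \wedge (v_1 \wedge v_3),\\
\psi(\tilde{\alpha}') &= \psi(e_2) = w \wedge (v_2 \wedge v_4).
\end{align*}
Each right-hand side has the form $w \wedge u$ where $u$ is the Minkowski cross product of two spacelike vectors spanning a timelike 2-plane of $\RR^{2,1}$ (the plane containing the relevant geodesic). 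The orthogonal of a timelike 2-plane is a spacelike line, so each $u$ is spacelike. Since $w$ is timelike and $u$ is spacelike, they are linearly independent, and $\spa(w,u)$ contains the timelike vector $w$ and is therefore Lorentzian of signature $(1,1)$. Its orthogonal complement $\RR(w\wedge u)$ is accordingly spacelike, so $w\wedge u$ is nonzero and spacelike. By Lemma~\ref{lem:inf-transl-in-R21}.(3), each of these Killing fields is an infinitesimal hyperbolic translation, so $\psi$ realizes a positive or negative infinitesimal strip deformation along each of the six arcs, whose support is thus exactly $\{\underline{\alpha}, \underline{\alpha}', \underline{\smash{\beta}}_1,\ldots,\underline{\smash{\beta}}_4\}$.

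Third, to identify the waists, I apply Lemma~\ref{lem:inf-transl-in-R21}.(3) once more: the translation axis of the spacelike Killing field $\psi(\tilde{\beta}_i) = w \wedge (v_i \wedge v_{i+1})$ is
$$\psi(\tilde{\beta}_i)^\perp \cap \HH^2 \, = \, \spa(w, v_i \wedge v_{i+1}) \cap \HH^2,$$
and the waist on $\tilde{\beta}_i$ is the intersection of this axis with $\tilde{\beta}_i$. On the other hand, the geodesics of $\HH^2$ perpendicular to $\tilde{\beta}_i$ are exactly those contained in 2-planes of $\RR^{2,1}$ passing through the hyperideal ``pole'' $v_i\wedge v_{i+1}$ of $\tilde{\beta}_i$ (the unique spacelike direction orthogonal to $\spa(v_i,v_{i+1})$). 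Writing $p$ for the point of $\HH^2\cap\RR w$, the perpendicular geodesic from $p$ to $\tilde{\beta}_i$ thus lies in $\spa(p,\, v_i\wedge v_{i+1}) = \spa(w,\, v_i\wedge v_{i+1})$, and its foot coincides with the waist. The same argument (with $v_i\wedge v_{i+1}$ replaced by $v_1\wedge v_3$ or $v_2\wedge v_4$) handles $\tilde{\alpha}$ and $\tilde{\alpha}'$.

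There is no serious obstacle: the argument is pure linear algebra in $\RR^{2,1}$ once Lemma~\ref{lem:descript-phi} is in hand. The only point requiring some care is the last stage, reconciling the ``translation axis intersected with the edge'' and the ``foot of perpendicular'' descriptions of the waist, which reduces to the classical characterization of orthogonal geodesics in the hyperboloid model through the pole of a geodesic.
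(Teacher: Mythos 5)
Your proof is correct and follows essentially the same approach as the paper's: invoke Lemma~\ref{lem:descript-phi}, use the explicit cross-product formulas \eqref{eqn:psi-beta_i}--\eqref{eqn:psi-e_i} to show each $\psi$-value is nonzero and spacelike, and read off the waists from Lemma~\ref{lem:inf-transl-in-R21}.(3) by noting the translation axes are the orthogonal planes to the cross products, which all contain $w$ and hence pass through $\HH^2\cap\RR w$. The only superficial difference is in stage two, where the paper more tersely observes that $\psi(\tilde{\beta}_i)$ lies in the spacelike plane $w^\perp$, whereas you argue via the signature of $\spa(w,u)$ and its orthogonal complement; these are the same argument.
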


\begin{proof}
From \eqref{eqn:psi-beta_i} and \eqref{eqn:psi-e_i} we know that $\psi(\tilde{\beta}_i)$ and $\psi({e}_i)$ are orthogonal to~$w$, hence are zero or spacelike if $w$ is timelike.
To see that they are nonzero, we note that the planes $\spa(v_i,v_{i+1})$ and $\spa(v_i,v_{i+2})$ are timelike, hence the vectors $v_i \wedge v_{i+1}$ and $v_i \wedge v_{i+2}$ are spacelike; in particular, these vectors are not collinear to~$w$, and we conclude using \eqref{eqn:psi-beta_i} and \eqref{eqn:psi-e_i}.

By Lemma~\ref{lem:inf-transl-in-R21}, the translation axes of the (positive or negative) infinitesimal strip deformation along $\tilde{\alpha}$, $\tilde{\alpha}'$, $\tilde{\beta}_i$ are the intersections of $\HH^2$ with the orthogonal planes in~$\RR^{2,1}$ to $\psi(e_1)$, $\psi(e_2)$, $\psi(\tilde{\beta}_i)$, respectively.
By \eqref{eqn:psi-beta_i} and \eqref{eqn:psi-e_i}, these all go through $\HH^2\cap\RR w$.
\end{proof}

\subsection{The map $\varphi$ of Section~\ref{subsec:abcd}}

Recall that the map $\varphi$ we constructed in Section~\ref{subsec:abcd} is given by $\varphi(\delta_i)=v_{i+1}-v_i$ for all~$i$.
By \eqref{eqn:psi-good-linear-combin}, the associated map $\psi : \widetilde{\mathscr{E}} \to \g$ satisfies the hypotheses of Lemma~\ref{lem:descript-phi}, hence $\varphi=\varphi_{w_0}$ for some $w_0\in\RR^{2,1}$.
We now determine~$w_0$.
(This will not be needed afterwards.)

\begin{lemma}\label{lem:pis}
The vector $w_0\in\RR^{2,1}$ is timelike and satisfies $\HH^2\cap\RR w_0=\{p\}$, where $p\in\HH^2$ is the intersection point of the common perpendicular to $\tilde{\beta}_1$ and $\tilde{\beta}_3$ in~$\HH^2$ with the common perpendicular to $\tilde{\beta}_2$ and $\tilde{\beta}_4$.
\end{lemma}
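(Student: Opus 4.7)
The plan is to convert the defining equation of $w_0$, namely $\varphi_{w_0}(\delta_i) = v_{i+1} - v_i$ for $i\in\{1,2,3,4\}$, into a simple linear system on $w_0$, and then to read $w_0$ off a geometric characterization of~$p$ via poles of geodesics.

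The Minkowski ``BAC--CAB'' identity
$$a \wedge (b \wedge c) \,=\, \langle a,c \rangle b - \langle a,b \rangle c$$
lets me rewrite the above four equations as $\langle w_0, v_{i+1}\rangle v_i - \langle w_0, v_i\rangle v_{i+1} = v_{i+1}-v_i$. Since $v_i$ and $v_{i+1}$ span a timelike $2$-plane and are thus linearly independent, this is equivalent to $\langle w_0, v_i\rangle = -1$ for all $i\in\{1,2,3,4\}$. The normalization \eqref{eqn:norm-v_i-bis} makes any one of these four equations a consequence of the other three, and $v_1,v_2,v_3$ are linearly independent in $\RR^{2,1}$ (otherwise $\spa(v_3,v_4)$ would equal $\spa(v_1,v_2)$, contradicting $\tilde{\beta}_1\neq\tilde{\beta}_3$), so the system has a unique solution $w_0$.

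Next, I would identify the common perpendiculars in the lemma via the pole correspondence: for any geodesic $\ell=\HH^2\cap P$ in $\HH^2$, the one-dimensional orthogonal complement $P^\perp\subset\RR^{2,1}$ is spacelike and is called the pole of $\ell$, and two geodesics of $\HH^2$ meet orthogonally iff their poles are Minkowski-orthogonal (this follows, e.g., by identifying $P^\perp$ with the axis of the hyperbolic Killing field translating along $\ell$, cf.\ Lemma~\ref{lem:inf-transl-in-R21}.(3)). The pole of $\tilde{\beta}_i$ is therefore (a multiple of) $v_i \wedge v_{i+1}$. Using \eqref{eqn:norm-v_i-bis} one checks that $v_1-v_2 = v_4-v_3$ is Minkowski-orthogonal to both $v_1\wedge v_2$ and $v_3\wedge v_4$, hence spans the pole of the common perpendicular to $\tilde{\beta}_1,\tilde{\beta}_3$; similarly $v_2-v_3 = v_1-v_4$ spans the pole of the common perpendicular to $\tilde{\beta}_2,\tilde{\beta}_4$.

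To conclude, the point $p\in\HH^2$ is characterized by $\langle p, v_1-v_2\rangle=\langle p, v_2-v_3\rangle=0$, which together with \eqref{eqn:norm-v_i-bis} gives $\langle p, v_i\rangle = c$ for a single $i$-independent constant $c$. Since $v_1,v_2,v_3$ span $\RR^{2,1}$, one cannot have $c=0$ (else $p$ would be the zero vector), so $-c^{-1}p$ solves the linear system of the first step. By uniqueness, $w_0=-c^{-1}p$ is a nonzero multiple of the timelike vector $p\in\HH^2\subset\RR^{2,1}$, hence is itself timelike, and $\RR w_0\cap\HH^2=\{p\}$. The only real subtlety will be matching the sign conventions (for the BAC--CAB identity and for the pole correspondence); once these are pinned down the lemma is a direct computation.
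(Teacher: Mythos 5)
Your proof is correct and takes a genuinely different route from the paper's. The paper observes that $\varphi(\delta_i)+\varphi(\delta_{i+2})=0$, deduces from antisymmetry of $\wedge$ that $w_0$ is collinear with $v_i\wedge v_{i+1}+v_{i+2}\wedge v_{i+3}$, and locates $\RR w_0$ as the intersection of the two planes $\RR(v_i\wedge v_{i+1})+\RR(v_{i+2}\wedge v_{i+3})$ which meet $\HH^2$ in the two common perpendiculars. You instead apply the Minkowski triple-product expansion to turn $w_0\wedge(v_i\wedge v_{i+1})=v_{i+1}-v_i$ into the scalar system $\langle w_0,v_i\rangle=\mathrm{const}$, and independently characterize $p$ by $\langle p,v_i\rangle=\mathrm{const}$ via the pole correspondence. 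The two arguments are dual to each other (poles and cross products vs.\ inner products), and roughly of equal length; yours has the small advantage of making the uniqueness of $w_0$ and the nonvanishing $c\ne 0$ explicit, whereas the paper relies implicitly on the injectivity of $w\mapsto\varphi_w$ from Lemma~\ref{lem:descript-phi}. One concrete point to fix: with the cross product and bilinear form as the paper defines them (signature $(+,+,-)$), the identity is $a\wedge(b\wedge c)=\langle a,b\rangle c-\langle a,c\rangle b$, the opposite sign from what you wrote, so the system is $\langle w_0,v_i\rangle=+1$; as you anticipate, this sign does not affect the conclusion since $\RR w_0=\RR p$ either way.
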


\begin{proof}
Since $\varphi(\delta_i)=v_{i+1}-v_i$, we have $\varphi(\delta_i)+\varphi(\delta_{i+2})=0$ by \eqref{eqn:norm-v_i-bis}.
By \eqref{eqn:psi-beta_i} we have $\varphi(\delta_i)=w_0 \wedge (v_i\wedge v_{i+1})$.
Therefore, $w_0\wedge (v_i\wedge v_{i+1}+v_{i+2}\wedge v_{i+3})=0$, and so $w_0$ is a multiple of $v_i\wedge v_{i+1}+v_{i+2}\wedge v_{i+3}$.
In particular, $w_0$ belongs to the plane $\RR (v_i\wedge v_{i+1})+\RR (v_{i+2}\wedge v_{i+3})$, whose intersection with~$\HH^2$ is the common perpendicular to $\tilde{\beta}_i$ and $\tilde{\beta}_{i+2}$.
This holds for all $1\leq i\leq 4$.
\end{proof}

In general, $\HH^2\cap\RR w_0$ is \emph{not} the point $\tilde{\alpha}\cap\tilde{\alpha}'$.

\subsection{Link with Conjecture~\ref{conj}}

The following lemma provides evidence for Conjecture~\ref{conj}.

\begin{lemma}\label{lem:evidence-conj}
Let the map ${\boldsymbol f} : \overline{X}\rightarrow H^1_{\rho}(\Gamma,\g)$ of Section~\ref{subsec:arc-complex} be defined with respect to our choice of geodesic representatives of the arcs of $\Delta\cup\Delta'$, with waists on $\tilde{\alpha}, \tilde{\alpha}', \tilde{\beta_i}$ that are all orthogonal projections of a common point $p\in\HH^2$, and with infinitesimal widths $m_{\alpha}, m_{\alpha'}, m_{\beta_i}$ all equal to~$1$.
Then the image of ${\boldsymbol f}$ looks salient at the codimension-2 face shared by ${\boldsymbol f}(\Delta)$ and ${\boldsymbol f}(\Delta')$, when seen from the origin of $H^1_\rho(\Gamma,\g)$.
\end{lemma}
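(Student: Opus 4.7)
The plan is to exhibit the vanishing relation among $\boldsymbol f(\alpha),\boldsymbol f(\alpha'),\boldsymbol f(\beta_1),\ldots,\boldsymbol f(\beta_4)$ predicted by Claim~\ref{claim:main}.(1) as a concrete linear combination whose coefficients have signs forcing saliency. Since $p\in\HH^2$ is timelike in~$\RR^{2,1}$, Lemma~\ref{lem:spacelike} applies to the map $\varphi_p$ of~\eqref{eqn:phi_w}: the associated relative motion map $\psi_p\in\Psi(\pm\widetilde{\mathscr{E}}'',\g)$ is a generalized infinitesimal strip deformation supported exactly on $\{\underline{\alpha},\underline{\alpha}',\underline{\smash{\beta}}_1,\ldots,\underline{\smash{\beta}}_4\}$, with waists given by the orthogonal projections of~$p$. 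By hypothesis these projections coincide with the fixed waists $p_\gamma$ entering the definition of the maps $\psi_\gamma$ of Definition~\ref{def:psi-alpha}, and the widths $m_\gamma$ are all equal to~$1$; thus each $\psi_p(\tilde\gamma)$ is a real scalar multiple $c_\gamma\,\psi_\gamma(\tilde\gamma)$. Writing $\psi_p=\sum_\gamma c_\gamma\psi_\gamma$ and applying $\mathscr{L}$ yields, via the $(\rho,0)$-equivariance of $\varphi_p$ and Observations~\ref{obs:realize-strips} and~\ref{obs:two-triang}, the vanishing identity
$$c_\alpha \boldsymbol f(\alpha)+c_{\alpha'}\boldsymbol f(\alpha')+\sum_{i=1}^4c_{\beta_i}\boldsymbol f(\beta_i)=0.$$

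The key step is to verify that $c_\alpha,c_{\alpha'}>0$ while $c_{\beta_i}<0$ for $i=1,\ldots,4$. Using formula~\eqref{eqn:psi-beta_i} and the identity $(a\wedge b)\wedge c=a\langle b,c\rangle-b\langle a,c\rangle$, evaluation at $q_i:=\mathrm{proj}_{\tilde\beta_i}(p)\in\tilde\beta_i$ gives
$$\psi_p(\tilde\beta_i)(q_i)=-\langle p,q_i\rangle\,(v_i\wedge v_{i+1}),$$
a positive multiple (since $\langle p,q_i\rangle<0$) of $v_i\wedge v_{i+1}$ viewed as a tangent vector at~$q_i$. Tracking orientations — via the explicit description of the Killing field $\varphi_p(\delta_i)$, whose axis is the perpendicular from $p$ to~$\tilde\beta_i$ — shows that this tangent vector points \emph{away} from $p$; equivalently, $\varphi_p(\delta_i)$ translates $\delta_i$ \emph{into} the outer region across $\tilde\beta_i$, opposite to the direction of a positive strip deformation. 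Since the transverse orientation of $\tilde\beta_i$ places $\delta_i$ on the positive side, this yields $c_{\beta_i}<0$. An analogous computation via~\eqref{eqn:psi-e_i} at the projection of $p$ onto $\tilde\alpha$ (and $\tilde\alpha'$) differs in that both tiles adjacent to each half-edge of $\tilde\alpha$ carry nonzero values of~$\varphi_p$, whose difference enters $\psi_p$; the resulting sign flips, giving $c_\alpha>0$ and $c_{\alpha'}>0$.

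With these signs, the relation becomes $c_\alpha \boldsymbol f(\alpha)+c_{\alpha'}\boldsymbol f(\alpha')=\sum_{i=1}^4|c_{\beta_i}|\,\boldsymbol f(\beta_i)$, a positive identity between positive combinations of $\boldsymbol f(\alpha),\boldsymbol f(\alpha')$ and of the $\boldsymbol f(\beta_i)$. Thus the point $\tfrac{c_\alpha\boldsymbol f(\alpha)+c_{\alpha'}\boldsymbol f(\alpha')}{c_\alpha+c_{\alpha'}}$ on the open segment $[\boldsymbol f(\alpha),\boldsymbol f(\alpha')]$ lies on the ray from the origin through the convex combination $\tfrac{\sum_i|c_{\beta_i}|\boldsymbol f(\beta_i)}{\sum_i|c_{\beta_i}|}$ of the $\boldsymbol f(\beta_i)$'s in the shared codimension-$2$ face; this is the claimed saliency of $\boldsymbol f(\Delta)\cup\boldsymbol f(\Delta')$ as seen from the origin. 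The main obstacle is the orientation chase in the second paragraph: the cross-product expressions are explicit, but matching translation directions to the transverse orientations adopted in Definition~\ref{def:psi-alpha} requires careful sign tracking analogous to the one in Section~\ref{subsec:abcd}.
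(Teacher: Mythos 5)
The first two thirds of your argument are sound and run parallel to the paper: you identify the realization of the zero cocycle as $\varphi_w$ for the timelike vector $w$ with $\HH^2\cap\RR w=\{p\}$, and the computation $\psi_p(\tilde\beta_i)(q_i)=-\langle p,q_i\rangle\,(v_i\wedge v_{i+1})$ is correct. Establishing the sign pattern $c_\alpha,c_{\alpha'}>0$, $c_{\beta_i}<0$ is also fine (and could even be quoted directly from Claim~\ref{claim:main}.(1)--(2) together with Remark~\ref{rem:connected-choices}).

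The gap is in your last paragraph: you infer saliency from the bare fact that the point
$\tfrac{c_\alpha\boldsymbol f(\alpha)+c_{\alpha'}\boldsymbol f(\alpha')}{c_\alpha+c_{\alpha'}}$
and the point
$\tfrac{\sum_i|c_{\beta_i}|\boldsymbol f(\beta_i)}{\sum_i|c_{\beta_i}|}$
lie on the same ray from the origin. But that is automatic as soon as the signs come out as above; it holds whether the image is salient or re-entrant at the shared codimension-$2$ face. What distinguishes the two cases is \emph{which} of the two collinear points is closer to the origin, i.e.\ the sign of $c_\alpha+c_{\alpha'}-\sum_i|c_{\beta_i}|=c_\alpha+c_{\alpha'}+\sum_\beta c_\beta$. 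Saliency seen from the origin requires this quantity to be strictly negative (the paper's inequality~\eqref{eqn:salient}), and your argument never addresses it.

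This missing inequality is precisely where the hypotheses (waists all projections of the common point $p$, widths all equal to $1$) become essential. The paper computes the actual coefficients: $c_\alpha=\Vert x_2-x_3\Vert$, $c_{\alpha'}=\Vert x_1-x_2\Vert$, $c_{\beta_i}=-\Vert x_i\Vert$ with $x_i:=\varphi_w(\delta_i)=w\wedge(v_i\wedge v_{i+1})$, and then uses the relation $v_1+v_3=v_2+v_4$ to observe that $(x_1,x_2,x_3,x_4)$ is a (nondegenerate) parallelogram in the spacelike plane $w^\perp$, from which two triangle inequalities give $\Vert x_1-x_2\Vert+\Vert x_2-x_3\Vert<\sum_i\Vert x_i\Vert$. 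Without this step — or some substitute for it — you have only re-derived the sign result of Claim~\ref{claim:main}, not the convexity statement of the lemma.
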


\begin{proof}
Recall the linear relation \eqref{eqn:zerocross}:
$$c_{\alpha} \boldsymbol{f}(\alpha) + c_{\alpha'} \boldsymbol{f}(\alpha') + \sum_{\substack{\beta\text{ arc of both}\\ \Delta\text{ and }\Delta'}} c_{\beta}\,{\boldsymbol f}(\beta) \, = \, 0 \, \in H^1_{\rho}(\Gamma,\g).$$ 
By Claim~\ref{claim:main}.(0) the coefficients $c_{\alpha}, c_{\alpha'}, c_{\beta}$ are unique up to scale, and by Claim~\ref{claim:main}--(1) we may take $c_{\alpha}$ and $c_{\alpha'}$ to be positive.
The fact that the image of ${\boldsymbol f}$ looks salient at the codimension-2 face shared by ${\boldsymbol f}(\Delta)$ and ${\boldsymbol f}(\Delta')$ is then expressed by
\begin{equation}\label{eqn:salient}
c_{\alpha} + c_{\alpha'} + \sum_{\substack{\beta\text{ arc of both}\\ \Delta\text{ and }\Delta'}} c_{\beta} <0.
\end{equation}
Let us prove that this inequality holds.

By definition, the coefficients $c_{\alpha}, c_{\alpha'}, c_{\beta}$ encode a realization of the zero cocycle by (positive and negative) infinitesimal strip deformations with the given geodesic representatives and waists.
By Lemma~\ref{lem:spacelike} and uniqueness of $c_{\alpha}, c_{\alpha'}, c_{\beta}$, this realization is of the form $\varphi_w$ for some timelike $w\in\RR^{2,1}$ with $\HH^2\cap\RR w=\{ p\}$.
(In particular, $c_{\beta}=0$ for $\beta\notin \{\alpha, \alpha', \beta_1, \beta_2, \beta_3, \beta_4\}$.)
Moreover, if $p$ varies continuously in~$\HH^2$ while $w$ remains in the same component of the timelike cone of~$\RR^{2,1}$, then the signs (positive or negative) of the infinitesimal strip deformations defining~$\varphi_w$ remain constant, similar to \eqref{eqn:requested-signs-w}.
Therefore, since the infinitesimal widths $m_{\alpha}, m_{\alpha'}, m_{\beta_i}$ are all equal to~$1$, the relation \eqref{eqn:zerocross} has the form
$$\Vert x_1 - x_2 \Vert \, {\boldsymbol f}(\alpha') + \Vert x_2 - x_3 \Vert \, {\boldsymbol f}(\alpha) - \sum_{i=1}^4 \Vert x_i\Vert \, {\boldsymbol f}(\beta_i) = 0 ,$$
where we set $x_i:=\varphi(\delta_i)$, and $\Vert x \Vert:=\sqrt{\langle x, x \rangle} >0$ for spacelike $x\in\RR^{2,1}$.
Note that the points $v_i\wedge v_{i+1}$, for $1\leq i\leq 4$, form a parallelogram in~$\RR^{2,1}$: indeed,
$$v_1\wedge v_2 + v_3\wedge v_4 - v_2\wedge v_3 - v_4\wedge v_1 = (v_1+v_3)\wedge (v_2+v_4)=0$$
by \eqref{eqn:norm-v_i-bis}.
By Lemma~\ref{lem:descript-phi}, it follows that the $x_i=w\wedge (v_i\wedge v_{i+1})$, for $1\leq i\leq 4$, form a parallelogram in the spacelike plane $w^{\perp} \subset \RR^{2,1}$.
As a consequence,
$$\Vert x_1 - x_2 \Vert + \Vert x_2 - x_3 \Vert - \sum_{i=1}^4 \Vert x_i\Vert <0,$$
and so \eqref{eqn:salient} holds.
\end{proof}

Note, however, that if the waists are chosen arbitrarily, so that they are not all projections of a common point $p\in\HH^2$, then typically none of the terms $c_{\beta}$ of \eqref{eqn:zerocross} vanish, and the signs of the terms other than $c_{\alpha}$ and $c_{\alpha'}$ may vary: the conclusion of Lemma~\ref{lem:evidence-conj} might then fail.

\vspace{0.5cm}

\end{document}